\definecolor{verylight}{gray}{0.97}
\definecolor{light}{gray}{0.9}
\definecolor{medium}{gray}{0.85}
\definecolor{dark}{gray}{0.6}
 \def\NZQ{\mathbb}               
 \def\KK{{\NZQ K}}
 \def\G{{\mathcal G}}
 \def\0b{{\mathbf 0}}
\newcommand\calP{\mathcal{P}}
 \def\opn#1#2{\def#1{\operatorname{#2}}} 
 \opn\chara{char}
 \opn\length{\ell}
 \opn\pd{pd}
 \opn\rk{rk}
 \opn\projdim{proj\,dim}
 \opn\injdim{inj\,dim}
 \opn\rank{rank}
 \opn\depth{depth}
 \opn\grade{grade}
 \opn\height{height}
 \opn\embdim{emb\,dim}
 \opn\codim{codim}
 \opn\Tr{Tr}
 \opn\bigrank{big\,rank}
 \opn\superheight{superheight}
 \opn\lcm{lcm}
 \opn\trdeg{tr\,deg}
 \opn\reg{reg}
 \opn\lreg{lreg}
 \opn\ini{in}
 \opn\lpd{lpd}
 \opn\size{size}
 \opn\sdepth{sdepth}
 \opn\link{link}
 \opn\fdepth{fdepth}\opn\lex{lex}
 \opn\tr{tr}
 \opn\type{type}
 \opn\gap{gap}
 \opn\arithdeg{arith-deg}
 \opn\HS{HS}
 \opn\GL{GL}
 \opn\supp{supp}
 \opn\div{div} \opn\Div{Div} \opn\cl{cl} \opn\Cl{Cl}
 \opn\Spec{Spec} \opn\Supp{Supp} \opn\supp{supp} \opn\Sing{Sing}
 \opn\Ass{Ass} \opn\Min{Min}\opn\Mon{Mon}
 \opn\Ann{Ann} \opn\Rad{Rad} \opn\Soc{Soc}\opn\Deg{Deg}
 \opn\Im{Im} \opn\Ker{Ker} \opn\Coker{Coker} \opn\Am{Am}
 \opn\Hom{Hom} \opn\Tor{Tor} \opn\Ext{Ext} \opn\End{End}
 \opn\Aut{Aut} \opn\id{id}
 \opn\nat{nat}
 \opn\pff{pf}
 \opn\Pf{Pf} \opn\GL{GL} \opn\SL{SL} \opn\mod{mod} \opn\ord{ord}
 \opn\Gin{Gin} \opn\Hilb{Hilb}\opn\sort{sort}
 \opn\PF{PF}\opn\Ap{Ap}
 \opn\mult{mult}
 \opn\bight{bight}
 \opn\aff{aff}
 \opn\relint{relint} \opn\st{st}
 \opn\lk{lk} \opn\cn{cn} \opn\core{core} \opn\vol{vol}  \opn\inp{inp} \opn\nilpot{nilpot}
 \opn\link{link} \opn\star{star}\opn\lex{lex}\opn\set{set}
 \opn\width{wd}
 \opn\Fr{F}
 \opn\QF{QF}
 \opn\G{G}
 \opn\type{type}\opn\res{res}
 \opn\conv{conv}
 \opn\Ind{Ind}
 \opn\gr{gr}
 \def\pot#1#2{#1[\kern-0.28ex[#2]\kern-0.28ex]}
 \opn\dirlim{\underrightarrow{\lim}}
 \opn\inivlim{\underleftarrow{\lim}}
 \def\Implies{\ifmmode\Longrightarrow \else
         \unskip${}\Longrightarrow{}$\ignorespaces\fi}
 \def\implies{\ifmmode\Rightarrow \else
         \unskip${}\Rightarrow{}$\ignorespaces\fi}
 \def\iff{\ifmmode\Longleftrightarrow \else
         \unskip${}\Longleftrightarrow{}$\ignorespaces\fi}
 \newtheorem{Theorem}{Theorem}[section]
 \newtheorem{Lemma}[Theorem]{Lemma}
 \newtheorem{Corollary}[Theorem]{Corollary}
 \newtheorem{Proposition}[Theorem]{Proposition}
 \newtheorem{Remark}[Theorem]{Remark}
 \newtheorem{Example}[Theorem]{Example}
 \newtheorem{Definition}[Theorem]{Definition}
 \let\epsilon\varepsilon
 \let\kappa=\varkappa
 \def\qed{\ifhmode\textqed\fi
       \ifmmode\ifinner\quad\qedsymbol\else\dispqed\fi\fi}
 \def\textqed{\unskip\nobreak\penalty50
        \hskip2em\hbox{}\nobreak\hfil\qedsymbol
        \parfillskip=0pt \finalhyphendemerits=0}
 \def\dispqed{\rlap{\qquad\qedsymbol}}
 \opn\dis{dis}
 \def\pnt{{\raise0.5mm\hbox{\large\bf.}}}
 \opn\Lex{Lex}
\begin{document}

\title{Edge ideals of some edge-weighted graphs}

\author{Guangjun Zhu$^{\ast}$,  Shiya Duan, Yijun Cui and Jiaxin Li}


\address{School of Mathematical Sciences, Soochow University, Suzhou, Jiangsu, 215006, P. R. China}

\email{zhuguangjun@suda.edu.cn(Corresponding author:Guangjun Zhu),\linebreak[4]
3136566920@qq.com(Shiya Duan), 237546805@qq.com(Yijun Cui),\linebreak[4]
lijiaxinworking@163.com(Jiaxin Li).}

\thanks{2020 {\em Mathematics Subject Classification}.
Primary 13F20, 13C15, 05C22; Secondary 05E40}

\thanks{Keywords:  Depth, regularity, integrally closed,  powers of the edge ideal,  edge-weighted graph}

\maketitle
\begin{abstract}
This paper presents exact formulas for the regularity and depth of powers of edge ideals of an edge-weighted star graph. Additionally, we provide exact formulas for the regularity of powers of the edge ideal of an edge-weighted integrally closed path, as well as lower bounds on the depth of powers of such an edge ideal.
\end{abstract}

\section{Introduction}
In this article, a graph means a simple graph without loops, multiple edges, and isolated vertices. Let $G$ be a graph with vertex set $V(G)=\{x_1,\ldots,x_n\}$ and   edge set $E(G)$. Suppose $w: E(G)\rightarrow \mathbb{Z}_{>0}$ is an edge weight function on $G$. We write $G_\omega$ for the pair $(G,\omega)$ and call it an {\em edge-weighted} graph with the underlying graph  $G$.
For a weighted graph $G_\omega$, its {\em edge-weighted ideal}  (or simply edge ideal), was introduced in \cite{PS}, is the ideal of the polynomial ring $S=\KK[x_{1},\dots, x_{n}]$ in $n$ variables over a field $\KK$ given by
\[
I(G_\omega)=(x_i^{\omega(e)}x_j^{\omega(e)}\mid e:=\{x_i,x_j\}\in E(G_\omega)).
\]
If $w$ is the constant function defined by $w(e)=1$ for $e\in E(G)$, then  $I(G_\omega)$ is the classical  edge ideal of the underlying graph $G$ of $G_\omega$, which
has been extensively studied in the literature \cite{B,BHT,MRW,FM,HT,M,MV,W,Z}.

Recently, there has been a surge of interest in characterizing weights for which the edge ideals of edge-weighted graphs are Cohen-Macaulay. For example,
Paulsen and Sather-Wagstaff in \cite{PS}  classified Cohen-Macalay edge-weighted graphs $G_\omega$ where the underlying
graph $G$ is a  cycle, a tree, or a  complete graph.  Seyed Fakhari et al. in \cite{FSTY} continued this study, they  classified
Cohen-Macalay edge-weighted graph $G_\omega$ when $G$ is a very well-covered graph. Recently, Diem et al. in \cite{DMV} gave a  complete  characterization of sequentially Cohen-Macaulay edge-weighted graphs. In \cite{W},  Wei classified all Cohen-Macaulay weighted chordal graphs from a purely graph-theoretic point of view. Hien in \cite{Hi}
classified Cohen-Macaulay edge-weighted graphs $G_\omega$ when $G$ has girth at least $5$. 

 Integral closure and normality of monomial ideals is also an interesting topic.
In \cite{DZCL}, we   gave a  complete  characterization of an integrally closed edge-weighted graph $G_\omega$ and  showed that if its underlying
graph $G$ is a  star graph,  a  path, or a cycle, then $G_\omega$ is normal.

The study of edge ideals of edge-weighted graphs is much more recent and consequently there are  fewer results in this direction.
In this paper, we decide to focus on the regularity  and depth of powers of the edge ideal $I(G_\omega)$, where   $G_\omega$ is a special graph. Recall that the regularity and depth are two central invariants associated to a homogeneous ideal  $I$. It is well known that
 $\reg(I^t)$ is asymptotically a linear function for $t\gg  0$, i.e., there exist constants $a$,  $b$ and a positive integer $t_0$ such that for all $t\geq t_0$, $\mbox{reg}\,(I^t)=at+b$ (see \cite{CHT,K}). In this regard, there has been of interest to find  the exact form of this linear function and to determine  the stabilization index $t_0$ at  which $\reg(I^t)$ becomes linear (cf. \cite{B,BHT}). It turns out that even in the case of monomial ideals it is challenging to find the linear function and $t_0$ (see \cite{Con}).  In \cite{Br}, Brodmann  showed that  $\mbox{depth}\,(S/I^t)$ is a constant for $t\gg 0$, and that this constant is bounded
above by $n-\ell(I)$, where $\ell(I)$ is the analytic spread of $I$.  In this regard, there has been an interest in determining the smallest value $t_0$ such that $\mbox{depth}\,(S/I^t)$ is a constant for all $t\geq t_ 0$ (see \cite{FM,HHi,M}).

The article is organized as follows. In Section \ref{sec:prelim}, we  provides a review of  important definitions and
terminology will be necessary later.
In Section \ref{sec:star},  by choosing different exact sequences and repeatedly using Lemma \ref{exact},  we  give some exact formulas for the regularity and  depth  of
powers of the edge ideal of an edge-weighted star graph. In Section \ref{sec:path}, using Betti splitting and polarization approaches, we  give some exact formulas for the regularity of
powers of the edge ideal of an edge-weighted integrally closed   path. We also provide some lower bounds on the depth of powers of such an edge ideal.

\section{Preliminaries}
\label{sec:prelim}

In this section, we provide the definitions and basic facts which will be used throughout this paper.
We refer to \cite{BH} and \cite{HH} for detailed information.

\subsection{Notions of simple graphs}
 Let $G$ be a simple graph with the vertex set $V(G)$ and the edge set $E(G)$. For any subset $A$ of $V(G)$, the \emph{induced subgraph} of $G$ on the set $A$, denoted by $G[A]$, satisfies that $V(G[A])=A$ and for any $x_i,x_j \in A$, $\{x_i,x_j\} \in E(G[A])$ if and only if $\{x_i,x_j\}\in E(G)$. At the same time, the induced subgraph of $G$ on the set $V(G)\setminus A$ will be denoted by $G\setminus A$. In particular, if $A=\{v\}$  then we will write $G\setminus v$ instead of $G\setminus \{v\}$ for simplicity. For any vertex $v\in V(G)$,
its \emph{neighborhood} is defined as $N_G(v)\!:=\{u \in V(G)\mid \{u,v\}\in E(G)\}$.

An edge-weighted  graph is  called a \emph{non-trivially  weighted} graph if there is at least one edge with  a weight  greater than $1$. Otherwise, it is called a
{\em trivially weighted} graph. An edge $e \in E(G_\omega)$ with {\em non-trivially weight} if $w(e) \ge2$. Otherwise, we say $e$ with {\em trivial weight}. 

A \emph{walk} $W$ of length $n-1$ in a graph $G$ is a sequence of vertices $w_1$ through $w_{n}$, where each consecutive pair of vertices $\{w_i,w_{i+1}\}$ is connected by an edge in $G$.
A \emph{path} is a walk where all vertices are distinct, and a \emph{cycle} is a walk where $w_1=w_n$ and other vertices are distinct. To simplify notation, a path of length $n-1$ is denoted by $P_n$, and a cycle of length $n$ is denoted by $C_n$.

\subsection{Notions from commutative algebra}

For any homogeneous ideal $I$ of the polynomial ring $R=\KK[z_{1},\ldots,z_{n}]$, there exists a \emph{graded minimal free resolution}
\[
 0\rightarrow \bigoplus\limits_{j}R(-j)^{\beta_{p,j}(R/I)}\rightarrow \bigoplus\limits_{j}R(-j)^{\beta_{p-1,j}(R/I)}\rightarrow \cdots\rightarrow \bigoplus\limits_{j}R(-j)^{\beta_{0,j}(R/I)}\rightarrow R/I\rightarrow 0,
\]
where $p\leq n$ and $R(-j)$ is obtained from $R$ by a shift of degree $j$. The number $\beta_{i,j}(R/I)$, the $(i,j)$-th graded \emph{Betti} number of $R/I$, is an invariant of $R/I$ that equals the number of minimal generators of degree $j$ in the $i$-th syzygy module of $R/I$. Of particular interest is the following invariant which measures the “size” of the minimal graded free resolution of $R/I$. The regularity of $R/I$, denoted $\reg(R/I)$, is defined by
\[
    \reg(R/I)\!:=\max\{j-i\mid \beta_{i,j}(R/I)\neq 0\}.
\]
Meanwhile, the \emph{projective dimension} of $R/I$, denoted by $\pd(R/I)$, is
\[
    \pd(R/I)\!:=\max\{i\mid \beta_{i,j}(R/I)\neq 0\}.
\]
These two invariants measure the complexity of the minimal graded free resolution of $R/I$.

\vspace{3mm}For  a monomial ideal $I$,   let $\mathcal{G}(I)$ denote its unique minimal set
of monomial generators. We now derive some formulas for $\mbox{pd}\,(I)$  and $\mbox{reg}\,(I)$ in some special cases by using some
tools developed in \cite{FHT}.

\begin{Definition} \label{bettispliting}Let $I$  be a monomial ideal. If there exist  monomial
ideals $J$ and $K$ such that $\mathcal{G}(I)$ is the disjoint union of $\mathcal{G}(J)$ and $\mathcal{G}(K)$. Then $I=J+K$
is a {\em Betti splitting} if
$$\beta_{i,j}(I)=\beta_{i,j}(J)+\beta_{i,j}(K)+\beta_{i-1,j}(J\cap K)\hspace{2mm}\mbox{for all}\hspace{2mm}i,j\geq 0,$$
where $\beta_{i-1,j}(J\cap K)=0\hspace{2mm}  \mbox{if}\hspace{2mm} i=0$.
\end{Definition}

 Definition \ref{bettispliting} implies the following results.
\begin{Corollary} \label{cor1}
If $I=J+K$ is a Betti splitting ideal, then
\begin{itemize}
 \item[(1)]$\reg(I)=\mbox{max}\,\{\reg(J),\reg(K),\reg(J\cap K)-1\}$,
 \item[(2)] $\pd(I)=\mbox{max}\,\{\pd(J),\pd(K),\pd(J\cap K)+1\}$.
\end{itemize}
\end{Corollary}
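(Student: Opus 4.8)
The plan is to read both formulas directly off the defining identity of a Betti splitting, using only the non-negativity of graded Betti numbers. Recall that $\reg(I)=\max\{j-i\mid \beta_{i,j}(I)\neq 0\}$ and $\pd(I)=\max\{i\mid \beta_{i,j}(I)\neq 0 \text{ for some } j\}$. By Definition \ref{bettispliting} we have $\beta_{i,j}(I)=\beta_{i,j}(J)+\beta_{i,j}(K)+\beta_{i-1,j}(J\cap K)$ for all $i,j\geq 0$, with the convention that the last term vanishes when $i=0$.

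First I would observe that every graded Betti number is the rank of a free module in a minimal resolution, hence a non-negative integer. Consequently the three summands on the right-hand side are all $\geq 0$, so $\beta_{i,j}(I)\neq 0$ holds precisely when at least one of $\beta_{i,j}(J)$, $\beta_{i,j}(K)$, $\beta_{i-1,j}(J\cap K)$ is nonzero. In other words, the support of the Betti table of $I$ is exactly the union of the supports coming from $J$, from $K$, and from the homologically shifted copy of $J\cap K$. This is the one place where it matters that no cancellation can occur; it is the crux of the argument, although it is immediate once non-negativity is invoked.

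For part (1), I would then maximize $j-i$ over this union. The contributions from the $J$- and $K$-supports give $\reg(J)$ and $\reg(K)$ respectively, by definition. For the intersection term, a nonzero $\beta_{i-1,j}(J\cap K)$ corresponds, after setting $i'=i-1$, to a nonzero $\beta_{i',j}(J\cap K)$ contributing the value $j-i=(j-i')-1$; maximizing over all such pairs gives $\reg(J\cap K)-1$. Taking the overall maximum of the three quantities yields formula (1).

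For part (2), the same bookkeeping applies to $\max\{i\}$. The $J$- and $K$-supports contribute $\pd(J)$ and $\pd(K)$. A nonzero $\beta_{i-1,j}(J\cap K)$ means that $i-1$ lies in the homological support of $J\cap K$ (note $i\geq 1$ here, since the term is zero for $i=0$), so the largest admissible $i$ is $\pd(J\cap K)+1$. The maximum of the three quantities gives formula (2). No genuine obstacle arises; the only care needed is to track the uniform homological shift by one in the $J\cap K$ term, which is what produces the $-1$ in (1) and the $+1$ in (2).
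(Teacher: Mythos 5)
Your proof is correct and is precisely the argument the paper leaves implicit: the paper states that Definition \ref{bettispliting} ``implies'' Corollary \ref{cor1} without further detail, and your non-cancellation observation (non-negativity of graded Betti numbers, so the support of the Betti table of $I$ is the union of the supports of $J$, $K$, and the shifted table of $J\cap K$) is exactly what makes that implication work. The bookkeeping of the homological shift producing the $-1$ in (1) and the $+1$ in (2) is also handled correctly.
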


\medskip
This formula was first obtained for the total Betti numbers by
Eliahou and Kervaire \cite{EK} and extended to the graded case by Fatabbi \cite{F}.
In  \cite{FHT}, the authors describe some sufficient conditions for an
ideal $I$ to have a Betti splitting.

\begin{Lemma}
\label{spliting}{\em (\cite[Corollary 2.7]{FHT})}
Suppose that $I=J+K$ where $\mathcal{G}(J)$ contains all
the generators of $I$ divisible by some variable $x_{i}$ and $\mathcal{G}(K)$ is a nonempty set containing
the remaining generators of $I$. If $J$ has a linear resolution, then $I=J+K$ is a Betti
splitting.
\end{Lemma}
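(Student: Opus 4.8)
The plan is to reduce the statement to a vanishing criterion for the maps that the two inclusions $J\cap K\hookrightarrow J$ and $J\cap K\hookrightarrow K$ induce on $\Tor$, and then to establish this vanishing by two genuinely different arguments on the two sides. First I would set up the Mayer--Vietoris short exact sequence $0\to J\cap K\to J\oplus K\to I\to 0$, in which the first map sends $w\mapsto(w,-w)$ and the second sends $(u,v)\mapsto u+v$. Applying $\Tor_\bullet(-,\KK)$ produces a graded long exact sequence whose relevant portion is $\Tor_i(J\cap K,\KK)\xrightarrow{\alpha_i}\Tor_i(J,\KK)\oplus\Tor_i(K,\KK)\to\Tor_i(I,\KK)\to\Tor_{i-1}(J\cap K,\KK)\to\cdots$. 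A direct dimension count in each internal degree shows that the additivity $\beta_{i,j}(I)=\beta_{i,j}(J)+\beta_{i,j}(K)+\beta_{i-1,j}(J\cap K)$ holds for all $i,j$ precisely when $\alpha_i=0$ for every $i$; equivalently, when both inclusion-induced maps $(\iota_J)_*\colon\Tor_i(J\cap K,\KK)\to\Tor_i(J,\KK)$ and $(\iota_K)_*\colon\Tor_i(J\cap K,\KK)\to\Tor_i(K,\KK)$ vanish. This is the Tor-criterion of \cite{FHT}, so it suffices to prove both vanishings.

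For the $K$-side I would invoke the fine $\ZZ^n$-grading. Every minimal generator of $K$ is coprime to $x_i$, and every multidegree appearing in the minimal free resolution of a monomial ideal is an $\lcm$ of its generators; hence each such multidegree $\ab$ has $x_i$-exponent $0$, so $\Tor_i(K,\KK)_\ab=0$ whenever $a_i>0$. In contrast, every generator of $J\cap K$ is divisible by $x_i$, so $\Tor_i(J\cap K,\KK)_\ab=0$ whenever $a_i=0$. Thus in each multidegree $\ab$ either the source or the target vanishes, and $(\iota_K)_*=0$ automatically, requiring no hypothesis on $K$ whatsoever.

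The $J$-side is where the fine grading no longer separates the two modules, and this is the main point of the argument. Since $J$ has a linear resolution it is generated in a single degree $d$, so $\beta_{i,j}(J)\neq 0$ forces $j=i+d$. I would then check that $J\cap K$ is generated in degrees $\geq d+1$: recall $J\cap K$ is generated by the monomials $\lcm(u,v)$ with $u\in\mathcal{G}(J)$ and $v\in\mathcal{G}(K)$, and because $u$ is a minimal generator of $I$ no distinct generator $v$ divides it, so $\lcm(u,v)$ strictly exceeds $u$ in degree. By the standard fact that minimal shifts increase by at least one per homological step, a module generated in degrees $\geq d+1$ satisfies $\beta_{i,j}=0$ for $j<i+d+1$; hence $\beta_{i,j}(J\cap K)\neq 0$ forces $j\geq i+d+1$. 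Consequently the graded map $(\iota_J)_*$ in internal degree $j$ has either zero target (when $j=i+d$) or zero source (when $j\geq i+d+1$), and therefore vanishes. Combining the two vanishings yields $\alpha_i=0$ for all $i$, and the criterion gives that $I=J+K$ is a Betti splitting. The only delicate point is the degree bound on $J\cap K$ together with the single-degree generation forced by the linear resolution; the $K$-side, by contrast, is a formal consequence of multigrading.
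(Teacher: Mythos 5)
Your proposal is correct. Note, however, that the paper offers no proof of this lemma at all: it is quoted verbatim as \cite[Corollary 2.7]{FHT}, so there is no internal argument to compare against. What you have written is essentially a self-contained reconstruction of the proof in that reference: the Mayer--Vietoris sequence $0\to J\cap K\to J\oplus K\to I\to 0$ and the resulting $\Tor$-vanishing criterion for Betti splittings, the $\ZZ^n$-graded (lcm-lattice) argument showing $\Tor_i(J\cap K,\KK)\to\Tor_i(K,\KK)$ vanishes for free in any $x_i$-partition, and the degree comparison $\beta_{i,j}(J)\neq 0\Rightarrow j=i+d$ versus $\beta_{i,j}(J\cap K)\neq 0\Rightarrow j\geq i+d+1$ (valid since distinct minimal generators of $I$ cannot divide one another, so every $\lcm(u,v)$ strictly exceeds degree $d$) forcing $\Tor_i(J\cap K,\KK)\to\Tor_i(J,\KK)$ to vanish. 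All the supporting facts you invoke (multidegrees in a minimal multigraded resolution are lcm's of generators; minimal shifts grow by at least one per homological step; a linear resolution forces generation in a single degree) are standard and correctly applied, so the argument is complete.
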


The following three lemmas are often used in this article.
\begin{Lemma}
\label{quotient} {\em\cite[Lemma 1.3]{HT1})} Let $S=\KK[x_{1},\dots, x_{n}]$ be a polynomial ring over a field $\KK$ and let $I$ be a proper non-zero homogeneous
ideal in $S$. Then
\begin{itemize}
\item[(1)] $\pd(I)=\pd(S/I)-1$,
\item[(2)] $\reg(I)=\reg(S/I)+1$.
\end{itemize}
\end{Lemma}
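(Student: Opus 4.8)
The plan is to read off both formulas directly from a graded minimal free resolution of $S/I$, exploiting the fact that $I$ is precisely the kernel of the canonical surjection $S\to S/I$. First I would write down a graded minimal free resolution
\[
0 \to F_p \to \cdots \to F_1 \xrightarrow{d_1} F_0 \xrightarrow{\epsilon} S/I \to 0,
\]
observing that, since $S/I$ is cyclic and generated in degree $0$, one may take $F_0=S$ with $\epsilon$ the quotient map. Because $\Ker(\epsilon)=I$ and $\operatorname{im}(d_1)=\Ker(\epsilon)$, truncating the resolution at $F_1$ produces an exact complex
\[
0 \to F_p \to \cdots \to F_2 \xrightarrow{d_2} F_1 \xrightarrow{d_1} I \to 0.
\]

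Next I would check that this truncated complex is itself minimal, so that it is a graded minimal free resolution of $I$. Minimality of the original resolution means every differential has all entries in $\frakm=(x_1,\dots,x_n)$; the differentials $d_2,d_3,\dots$ surviving in the truncation are unchanged, so their entries remain in $\frakm$ and no free summand splits off. Matching the two resolutions term by term, the free module in homological degree $i$ of the resolution of $I$ is exactly $F_{i+1}$, which yields the key Betti-number identity
\[
\beta_{i,j}(I)=\beta_{i+1,j}(S/I) \quad \text{for all } i\geq 0 \text{ and all } j.
\]
Substituting this into the definitions gives both statements almost immediately. For (1), since $I\neq 0$ the cyclic module $S/I$ is not free, hence $\pd(S/I)\geq 1$ and its top nonvanishing homological degree occurs at some $i\geq 1$; reindexing through the identity gives $\pd(S/I)=1+\pd(I)$. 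For (2), the shift $\beta_{i,j}(I)=\beta_{i+1,j}(S/I)$ converts each contribution $j-i$ for $I$ into $(j-i)-1$ for $S/I$, which formally produces $\reg(I)=\reg(S/I)+1$.

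The one point requiring a moment of care — and the step I would flag as the main (if minor) obstacle — is the boundary term $\beta_{0,0}(S/I)=1$ in the regularity computation, which has no counterpart among the Betti numbers of $I$. This term contributes $0$ to $\reg(S/I)$, so I must verify it does not dominate the maximum. Here I would use the hypothesis that $I$ is a proper nonzero homogeneous ideal: its minimal generators have degree $\geq 1$, equivalently $\beta_{1,j}(S/I)=\beta_{0,j}(I)\neq 0$ for some $j\geq 1$, contributing $j-1\geq 0$ to $\reg(S/I)$. Thus the maximum defining $\reg(S/I)$ is already attained among terms with $i\geq 1$, the discarded boundary term is harmless, and the reindexing in (2) goes through. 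I expect the bookkeeping for (1) and the minimality check to be entirely routine, with this boundary verification being the only place where the properness and nonvanishing hypotheses on $I$ are genuinely used.
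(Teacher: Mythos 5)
Your proposal is correct and complete. Note that the paper itself gives no proof of this statement: it is quoted verbatim from \cite[Lemma 1.3]{HT1}, so there is no internal argument to compare against. Your truncation argument is the standard (essentially canonical) proof of this fact: passing from the minimal graded free resolution of $S/I$ to that of $I$ yields the shift $\beta_{i,j}(I)=\beta_{i+1,j}(S/I)$, from which both identities follow, and you correctly identify and dispose of the one delicate point — that the boundary term $\beta_{0,0}(S/I)=1$ contributes $0$ to $\reg(S/I)$ but is never the strict maximum, precisely because $I$ is proper and nonzero, so its minimal generators have degree at least $1$ and already contribute $j-1\geq 0$ at homological degree $i=1$.
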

In particular, if $u$ is a monomial of degree $d$ in $S$ and  $K=(u)$, then we have $\reg(S/K)=d-1$.

 By Auslander-Buchsbaum formula (see \cite[Corollary A.4.3]{HH}), we have 
\[
\depth(S/I)=n-\pd(S/I).
\]

\begin{Lemma}{\em (\cite[Lemma 2.2 and  Lemma 3.2 ]{HT2})}
\label{sum2}
Let $S_{1}=\KK[x_{1},\dots,x_{m}]$, $S_{2}=\KK[x_{m+1},\dots,x_{n}]$ and $S=\KK[x_{1},\dots,x_{n}]$ be three polynomial rings over $\KK$, $I\subseteq S_{1}$ and $J\subseteq S_{2}$ be two proper non-zero homogeneous  ideals. Then we have
\begin{itemize}
\item[(1)] $\reg(S/(I+J))=\reg(S_{1}/I)+\reg(S_{2}/J)$,
\item[(2)] $\depth(S/(I+J))=\depth(S_{1}/I)+\depth(S_{2}/J)$.
\end{itemize}
\end{Lemma}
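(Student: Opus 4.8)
The plan is to exploit the tensor-product structure of the situation: since $S\iso S_1\tensor_\KK S_2$, we have $S/(I+J)\iso (S_1/I)\tensor_\KK(S_2/J)$ as graded $S$-modules. First I would fix minimal graded free resolutions $\mathbb{F}_\bullet\to S_1/I$ over $S_1$ and $\mathbb{G}_\bullet\to S_2/J$ over $S_2$, and form the total complex $\mathbb{F}_\bullet\tensor_\KK\mathbb{G}_\bullet$ of free $S$-modules. Because $\KK$ is a field, every module here is $\KK$-flat, so the K\"unneth formula applies and gives $H_k(\mathbb{F}_\bullet\tensor_\KK\mathbb{G}_\bullet)\iso\Dirsum_{i+i'=k}H_i(\mathbb{F}_\bullet)\tensor_\KK H_{i'}(\mathbb{G}_\bullet)$. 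As $\mathbb{F}_\bullet$ and $\mathbb{G}_\bullet$ are resolutions, this vanishes for $k>0$ and equals $(S_1/I)\tensor_\KK(S_2/J)=S/(I+J)$ for $k=0$, so $\mathbb{F}_\bullet\tensor_\KK\mathbb{G}_\bullet$ is a graded free resolution of $S/(I+J)$ over $S$.

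Next I would verify that this resolution is minimal. Its differential is assembled from those of $\mathbb{F}_\bullet$ and $\mathbb{G}_\bullet$, whose matrix entries lie in the graded maximal ideals $\frakm_{S_1}$ and $\frakm_{S_2}$ respectively by minimality of each factor; hence all entries of the total differential lie in $\frakm_{S_1}+\frakm_{S_2}\subseteq\frakm_S$, so no unit appears and the resolution is minimal. Reading the graded Betti numbers off this minimal resolution then yields the multiplicative formula $\beta_{k,d}^S(S/(I+J))=\sum_{i+i'=k,\,j+j'=d}\beta_{i,j}^{S_1}(S_1/I)\,\beta_{i',j'}^{S_2}(S_2/J)$ for all $k,d$.

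From this formula both invariants follow by a short bookkeeping argument, using that all Betti numbers are nonnegative so a sum of products is nonzero exactly when some product is nonzero. Since $d-k=(j-i)+(j'-i')$, the maximum of $d-k$ over nonvanishing Betti numbers equals the sum of the two separate maxima, giving $\reg(S/(I+J))=\reg(S_1/I)+\reg(S_2/J)$, which is (1). Applying the same reasoning to $k=i+i'$ gives $\pd_S(S/(I+J))=\pd_{S_1}(S_1/I)+\pd_{S_2}(S_2/J)$. I would then invoke the Auslander-Buchsbaum formula recorded above: with $S_1,S_2$ having $m$ and $n-m$ variables, $\depth(S_1/I)+\depth(S_2/J)=(m-\pd_{S_1}(S_1/I))+((n-m)-\pd_{S_2}(S_2/J))=n-\pd_S(S/(I+J))=\depth(S/(I+J))$, which is (2).

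The hard part will be the minimality check for the tensor-product resolution. Exactness is immediate from K\"unneth, but one must argue carefully that tensoring two minimal complexes over the base field $\KK$ introduces no unit into the combined differential; this is exactly what upgrades the K\"unneth relation on Betti numbers from an inequality to the additive identity, and hence forces the additivity of $\reg$, $\pd$, and $\depth$ rather than just bounds. Once minimality is secured, everything else is formal.
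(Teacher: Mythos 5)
Your proof is correct, but it cannot be compared against a proof in the paper because there is none: the paper invokes this lemma purely as a citation to Hoa and Tam (\cite[Lemmas 2.2 and 3.2]{HT2}) and never argues it. What you give is the standard self-contained argument, and it is sound at every step: the identification $S/(I+J)\iso (S_1/I)\tensor_\KK (S_2/J)$, the fact that the tensor product over $\KK$ of the two minimal graded free resolutions is a complex of free $S$-modules (using $S_1(-a)\tensor_\KK S_2(-b)\iso S(-a-b)$), exactness via K\"unneth (trivially applicable since every $\KK$-module is flat), and minimality because the total differential $d_F\tensor 1\pm 1\tensor d_G$ has all entries in $\frakm_{S_1}S+\frakm_{S_2}S\subseteq\frakm_S$. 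Your route in fact proves more than the cited lemma: you obtain the product formula $\beta_{k,d}^S(S/(I+J))=\sum_{i+i'=k,\,j+j'=d}\beta_{i,j}^{S_1}(S_1/I)\,\beta_{i',j'}^{S_2}(S_2/J)$, hence additivity of projective dimension and of regularity simultaneously (the bookkeeping is valid since the admissible index set is a product set and Betti numbers are nonnegative, so no cancellation can occur), with depth additivity then following from Auslander--Buchsbaum exactly as the paper's conventions set it up. The only thing the citation buys that your argument does not is brevity; conversely, your argument makes the paper self-contained at this point and records the Betti-number identity, which is strictly stronger than items (1) and (2) of the lemma.
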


\begin{Lemma}  {\em (\cite[Lemmas 2.1 and 3.1]{HT2})}
	\label{exact}
	Let $0\longrightarrow M\longrightarrow N\longrightarrow P\longrightarrow 0$ be a short exact
	sequence of finitely generated graded S-modules. Then we have
	\begin{itemize}
		\item[(1)]$\reg\,(N)\leq max\{\reg\,(M), \reg\,(P)\}$, the equality holds if $\reg\,(P) \neq \reg\,(M)-1$.
		\item[(2)]$\depth\,(N)\geq min\{\depth\,(M), \depth\,(P)\}$, the equality holds if $\depth\,(P) \neq \depth\,(M)-1$.
	\end{itemize}
\end{Lemma}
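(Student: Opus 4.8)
The plan is to derive both statements from the long exact sequence of local cohomology attached to $0\to M\to N\to P\to 0$, using the standard cohomological descriptions of the two invariants (see \cite{BH}). Writing $\mm=(x_1,\dots,x_n)$ for the graded maximal ideal of $S$, recall that for a finitely generated graded module $L$ one has
\[
\depth(L)=\min\{\,i\mid \Coh{i}{L}\neq 0\,\}
\qquad\text{and}\qquad
\reg(L)=\max\{\,i+j\mid \Coh{i}{L}_j\neq 0\,\}.
\]
Applying $\Coh{\bullet}{-}$ to the short exact sequence yields, in each internal degree $j$, a graded long exact sequence
\[
\cdots\to \Coh{i}{M}_j\to \Coh{i}{N}_j\to \Coh{i}{P}_j\xrightarrow{\ \delta\ }\Coh{i+1}{M}_j\to\cdots,
\]
and the entire argument consists of reading vanishing and non-vanishing of the middle terms off the outer ones, while tracking the connecting map $\delta$.

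For the depth statement, set $d=\min\{\depth M,\depth P\}$. When $i<d$ both $\Coh{i}{M}$ and $\Coh{i}{P}$ vanish, so exactness forces $\Coh{i}{N}=0$ and hence $\depth N\ge d$. To upgrade this to equality under the hypothesis $\depth P\neq\depth M-1$, I would show $\Coh{d}{N}\neq 0$ in two cases. If $\depth M\le\depth P$, then $\Coh{d}{M}\neq 0$ while $\Coh{d-1}{P}=0$, so $\Coh{d}{M}$ embeds into $\Coh{d}{N}$; here no extra hypothesis is needed. If $\depth M>\depth P$, then $\Coh{d}{M}=0$, so $\Coh{d}{N}=\ker\bigl(\delta\colon\Coh{d}{P}\to\Coh{d+1}{M}\bigr)$, and the hypothesis is exactly what forces $\Coh{d+1}{M}=0$: combining $\depth P<\depth M$ with $\depth P\neq\depth M-1$ gives $\depth P\le\depth M-2$, so $d+1<\depth M$, whence $\delta=0$ and $\Coh{d}{N}\cong\Coh{d}{P}\neq 0$.

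For the regularity statement, put $r_M=\reg M$, $r_P=\reg P$, $r_N=\reg N$. If $\Coh{i}{N}_j\neq 0$, then exactness of $\Coh{i}{M}_j\to\Coh{i}{N}_j\to\Coh{i}{P}_j$ forces $\Coh{i}{M}_j\neq 0$ or $\Coh{i}{P}_j\neq 0$, so $i+j\le\max\{r_M,r_P\}$; maximizing over $(i,j)$ gives the inequality $r_N\le\max\{r_M,r_P\}$. For equality under $r_P\neq r_M-1$, I would exhibit a pair $(i,j)$ with $i+j=\max\{r_M,r_P\}$ and $\Coh{i}{N}_j\neq 0$. If $r_M\le r_P$, pick $(i,j)$ with $i+j=r_P$ and $\Coh{i}{P}_j\neq 0$; then $(i+1)+j>r_M$ forces $\Coh{i+1}{M}_j=0$, so $\Coh{i}{N}_j\to\Coh{i}{P}_j$ is surjective and $\Coh{i}{N}_j\neq 0$. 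If $r_M>r_P$, the hypothesis yields $r_P\le r_M-2$; picking $(i,j)$ with $i+j=r_M$ and $\Coh{i}{M}_j\neq 0$, one has $(i-1)+j>r_P$, so $\Coh{i-1}{P}_j=0$ and $\Coh{i}{M}_j$ injects into $\Coh{i}{N}_j$. Either way $r_N=\max\{r_M,r_P\}$.

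The inequalities are routine and fall directly out of exactness. The delicate point — and the sole place the hypotheses $\depth P\neq\depth M-1$ and $\reg P\neq\reg M-1$ enter — is the control of the connecting homomorphism $\delta$ in the equality statements: one must guarantee that the extremal local cohomology of $M$ (for depth) or of $P$ (for regularity) is not annihilated on passage to $N$, and each hypothesis is precisely the numerical condition ensuring that the neighboring term through which $\delta$ could act already vanishes. The main care in writing this up is to handle the degenerate boundary subcases uniformly, in particular the equal-invariant case and the top cohomological index, where a priori $\Coh{i+1}{M}$ might survive.
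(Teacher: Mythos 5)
Your proof is correct: the inequalities follow from exactness of the local cohomology sequence as you say, and your case analysis for the equality statements (using $\depth P\neq\depth M-1$ to kill $\Coh{d+1}{M}$, resp. $\reg P\neq\reg M-1$ to kill $\Coh{i-1}{P}_j$) is sound, including the observation that no hypothesis is needed when $\depth M\le\depth P$ or $\reg M\le\reg P$. The paper itself gives no proof of this lemma—it is quoted from \cite{HT2}—and your long exact sequence of local cohomology argument is precisely the standard proof appearing in that cited source, so the approaches coincide.
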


\section{Star graph}
\label{sec:star}

In this section, we will give precise formulas for the depth and  regularity of powers of the edge ideal of an edge-weighted star graph. For a positive integer $n$, the notation $[n]$ denotes the set $\{1,2,\dots,n\}$. 

\begin{Theorem} \label{star}
Let $G_\omega$ be an edge-weighted star graph with $n$ vertices, and let  the set of monomial generators of its edge ideal be $\mathcal{G}(I(G_\omega))=\{(x_ix_n)^{\omega_i}\mid i\in [n-1]\}$.
 Then 
\begin{itemize}
\item[(1)]$\depth(S/I(G_\omega))=1$.
\item[(2)]$\reg(S/I(G_\omega))=\omega+\sum\limits_{i=1}\limits^{n-1}{(\omega_i-1)}$, where $\omega=\max\,\{\omega_1,\ldots,\omega_{n-1}\}$.
\end{itemize}
\end{Theorem}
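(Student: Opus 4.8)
The plan is to filter $S/I$ by colon ideals with respect to the center variable $x_n$ and to run a downward induction driven by the short exact sequences coming from multiplication by $x_n$, feeding them into Lemma \ref{exact}. Write $I=I(G_\omega)$ and, for $0\le k\le \omega$, set $I_k=(I:x_n^{k})$. A direct inspection of the generators gives
\[
I_k=\bigl(x_i^{\omega_i}x_n^{\max(\omega_i-k,\,0)}\mid i\in[n-1]\bigr),
\]
so that $I_0=I$ and $I_\omega=(x_1^{\omega_1},\ldots,x_{n-1}^{\omega_{n-1}})$. The latter is a complete intersection supported on $n-1$ of the $n$ variables, so by Lemma \ref{sum2} (together with $\reg\KK[x]/(x^{a})=a-1$) it satisfies $\depth(S/I_\omega)=1$ and $\reg(S/I_\omega)=\sum_{i=1}^{n-1}(\omega_i-1)$. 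This is the base of the induction.

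For the inductive step I would use that $(I_k:x_n)=I_{k+1}$, which makes multiplication by $x_n$ the injection in the short exact sequence
\[
0\longrightarrow (S/I_{k+1})(-1)\xrightarrow{\;\cdot x_n\;} S/I_k\longrightarrow S/(I_k+(x_n))\longrightarrow 0 .
\]
Reducing the generators of $I_k$ modulo $x_n$ identifies the right-hand term with $\KK[x_1,\ldots,x_{n-1}]/(x_i^{\omega_i}\mid \omega_i\le k)$. Setting $A_k=\{i\in[n-1]\mid \omega_i\le k\}$, Lemma \ref{sum2} then gives $\depth\bigl(S/(I_k+(x_n))\bigr)=n-1-|A_k|$ and $\reg\bigl(S/(I_k+(x_n))\bigr)=\sum_{i\in A_k}(\omega_i-1)$. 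Since $k<\omega$, the index attaining the maximal weight lies outside $A_k$, whence $A_k\subsetneq[n-1]$ and in particular $\depth\bigl(S/(I_k+(x_n))\bigr)\ge 1$.

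With inductive hypotheses $\depth(S/I_{k+1})=1$ and $\reg(S/I_{k+1})=(\omega-k-1)+\sum_i(\omega_i-1)$, both statements drop out of Lemma \ref{exact}. For depth, part (2) gives $\depth(S/I_k)=\min\{1,\,n-1-|A_k|\}=1$, and the equality case applies because the cokernel has depth $\ge 1\ne 0=\depth\bigl((S/I_{k+1})(-1)\bigr)-1$. For regularity, I compute $\reg\bigl((S/I_{k+1})(-1)\bigr)-\reg\bigl(S/(I_k+(x_n))\bigr)=(\omega-k)+\sum_{i\notin A_k}(\omega_i-1)$; once $\omega\ge 2$ this is at least $2$ (the max-weight index contributes $\omega-1\ge 1$), so the equality hypothesis of Lemma \ref{exact}(1) holds and $\reg(S/I_k)=(\omega-k)+\sum_i(\omega_i-1)$. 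Setting $k=0$ yields both formulas of the theorem.

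The one place the regularity induction stalls is the degenerate case $\omega=1$, i.e.\ a trivially weighted star: there the displayed difference can equal $1$ and Lemma \ref{exact}(1) yields only an inequality. I would dispatch this directly, since $I=x_n(x_1,\ldots,x_{n-1})$ has a linear resolution and hence $\reg(S/I)=1=\omega+\sum_i(\omega_i-1)$. Thus the main obstacle is not conceptual but bookkeeping: verifying that the equality hypotheses of Lemma \ref{exact} hold at every stage, which I reduce to the two numerical inequalities $\depth\bigl(S/(I_k+(x_n))\bigr)\ge 1$ and $\reg\bigl((S/I_{k+1})(-1)\bigr)-\reg\bigl(S/(I_k+(x_n))\bigr)\ge 2$, together with the separate, easy treatment of the trivially weighted star.
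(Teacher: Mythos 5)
Your proof is correct, but it takes a genuinely different route from the paper's. The paper orders the weights $\omega_1\ge\cdots\ge\omega_{n-1}$ and inducts on the number of vertices: it peels off the leaf $x_{n-1}$ of smallest weight via the short exact sequence for multiplication by $x_{n-1}^{\omega_{n-1}}$, using that $I(G_\omega):x_{n-1}^{\omega_{n-1}}=(x_n^{\omega_{n-1}})$ is principal and that $(I(G_\omega),x_{n-1}^{\omega_{n-1}})$ is the edge ideal of the smaller star plus a pure power, then applies Lemma \ref{exact} once per step (with two sequences at the base $n=3$). You instead keep $n$ fixed and run a descending induction along the colon filtration $I_k=(I:x_n^k)$, $0\le k\le\omega$, whose top $I_\omega=(x_1^{\omega_1},\ldots,x_{n-1}^{\omega_{n-1}})$ is a complete intersection; each step is the multiplication-by-$x_n$ sequence, whose cokernel is again a complete intersection in the variables indexed by $A_k$ together with $(x_n)$. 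Your identifications of $I_k$, of $(I_k:x_n)=I_{k+1}$, and of $I_k+(x_n)$ are all correct, as are the numerical estimates guaranteeing the equality cases. What your route buys: no ordering of the weights, no induction on $n$, a transparent source for both terms of the answer (the base contributes $\sum_i(\omega_i-1)$ and each of the $\omega$ colon steps contributes exactly $1$), and an explicit verification of the equality hypotheses of Lemma \ref{exact} at every stage, which the paper asserts without spelling out. The price is the degenerate case $\omega=1$, where your regularity step degenerates to an inequality and you must fall back on the linear resolution of $x_n(x_1,\ldots,x_{n-1})$; the paper's leaf-peeling induction covers trivial weights without special-casing.
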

\begin{proof} Let us assume, without loss of generality, that  $\omega_1 \geq \omega_2 \geq \cdots \geq \omega_{n-1}$. We will now proceed to prove the given  statements   by induction on $n$, we first establish the base case where $n=2$, which is trivial. 
For $n=3$,  the following equalities hold: $I(G_\omega) : x_{3}^{\omega_{2}}=(x_1^{\omega_1}x_3^{\omega_1-\omega_2},x_2^{\omega_2})$, $(I(G_\omega), x_{3}^{\omega_{2}})=(x_3^{\omega_2})$, $I(G_\omega) : x_{2}^{\omega_{2}}=(x_3^{\omega_2})$, and $(I(G_\omega), x_{2}^{\omega_{2}})=((x_1x_3)^{\omega_1},x_2^{\omega_2})$. Therefore, $\depth(S/(I(G_\omega) \colon x_{3}^{\omega_{2}}))=1$, $\depth(S/(I(G_\omega), x_{3}^{\omega_{2}}))=2$,   $\reg(S/(I(G_\omega) \colon x_{2}^{\omega_{2}}))=\omega_{2}-1$, and  $\reg(S/(I(G_\omega), x_{2}^{\omega_{2}}))=2\omega_{1}+\omega_{2}-2$. By analyzing the short exact sequences
\begin{gather*}
\begin{matrix}
 0 & \rightarrow & \frac{S}{I(G_\omega) : x_3^{\omega_2}}(-\omega_2)  &  \stackrel{\cdot x_{3}^{\omega_{2}}} \longrightarrow  & \frac{S}{I(G_\omega)} & \rightarrow &  \frac{S}{(I(G_\omega),x_3^{\omega_2})} & \rightarrow & 0,\\
 0 & \rightarrow & \frac{S}{I(G_\omega) : x_2^{\omega_2}}(-\omega_2)  &  \stackrel{\cdot x_{2}^{\omega_{2}}} \longrightarrow  & \frac{S}{I(G_\omega)} & \rightarrow &  \frac{S}{(I(G_\omega),x_2^{\omega_2})} & \rightarrow & 0,
 \end{matrix}
\end{gather*}
using Lemma \ref{exact},  we can obtain that $\depth(S/I(G_\omega))=1$ and $\reg(S/I(G_\omega))=2\omega_{1}+\omega_{2}-2$.

In the following, we assume that  $n \geq 4$ and that the results hold  for $n-1$.  Since  $I(G_\omega) : x_{n-1}^{\omega_{n-1}}=(x_n^{\omega_{n-1}})$ and $(I(G_\omega), x_{n-1}^{\omega_{n-1}})=J+(x_{n-1}^{\omega_{n-1}})$ where  $J=((x_1x_n)^{\omega_1},\ldots,(x_{n-2}x_n)^{\omega_{n-2}})$, we obtain $\depth\Big(\frac{S}{I(G_\omega) \colon x_{n-1}^{\omega_{n-1}}}\Big)=n-1$, $\depth\Big(\frac{S}{(I(G_\omega), x_{n-1}^{\omega_{n-1}})}\Big)
\\ =\depth(\frac{S'}{J})=1$. Furthermore, $\reg\Big(\frac{S}{I(G_\omega) \colon x_{n-1}^{\omega_{n-1}}}\Big)
=\omega_{n-1}-1$ and $\reg\left(\frac{S}{(I(G_\omega), x_{n-1}^{\omega_{n-1}})}\right)=\reg(\frac{S'}{J})+(\omega_{n-1}-1)=\omega_1+\sum\limits_{i=1}\limits^{n-1}{(\omega_i-1)}$ by the inductive hypothesis, where $S'=\KK[x_1,\ldots,x_{n-2},x_n]$.  Applying Lemma  \ref{exact}   to the following short exact sequence
\begin{gather*}
\begin{matrix}
 0 & \rightarrow & \frac{S}{I(G_\omega) : x_{n-1}^{\omega_{n-1}}}(-\omega_{n-1})  &  \stackrel{\cdot x_{n-1}^{\omega_{n-1}}} \longrightarrow  & \frac{S}{I(G_\omega)} & \rightarrow &  \frac{S}{(I(G_\omega),x_{n-1}^{\omega_{n-1}})} & \rightarrow & 0,
 \end{matrix}
\end{gather*}
we get that $\depth(S/I(G_\omega))=1$ and  $\reg(S/I(G_\omega))=\omega_1+\sum\limits_{i=1}\limits^{n-1}{(\omega_i-1)}$.$\hfill$
\end{proof}

\begin{Lemma}\label{starcolon}
Let $G_\omega$ be an edge-weighted star graph as in Theorem \ref{star}. Suppose $\omega_1 \geq \omega_2 \geq \cdots \geq \omega_{n-1}$. Then,  for any $t\ge 2$, we have
\begin{itemize}
		\item[(1)] $(I(G_\omega)^t : (x_{n-1}x_{n})^{\omega_{n-1}})=I(G_\omega)^{t-1}$;
 \item[(2)] $((I(G_\omega)^t : x_{n-1}^{\omega_{n-1}}),x_n^{\omega_{n-1}})=(x_n^{\omega_{n-1}})$;
 \item[(3)] $(I(G_\omega)^t,x_{n-1}^{\omega_{n-1}})=I((G_\omega\setminus {x_{n-1}})^t,x_{n-1}^{\omega_{n-1}})$.
 \end{itemize}
  \end{Lemma}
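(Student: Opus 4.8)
The plan is to read all three identities as equalities of monomial ideals and to verify each on monomials, using the combinatorial membership criterion for powers of a monomial ideal. Write $I:=I(G_\omega)$ and $f_i:=(x_ix_n)^{\omega_i}=x_i^{\omega_i}x_n^{\omega_i}$, so that $\mathcal{G}(I)=\{f_1,\ldots,f_{n-1}\}$. A monomial $m=x_1^{b_1}\cdots x_n^{b_n}$ lies in $I^t$ if and only if there exist integers $a_1,\ldots,a_{n-1}\geq 0$ with $\sum_{i=1}^{n-1}a_i=t$, with $\omega_ia_i\leq b_i$ for every $i\in[n-1]$, and with $\sum_{i=1}^{n-1}\omega_ia_i\leq b_n$. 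I will use this description throughout, together with the standing hypothesis $\omega_{n-1}=\min_i\omega_i$.

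For part (1), the inclusion $I^{t-1}\subseteq (I^t:(x_{n-1}x_n)^{\omega_{n-1}})$ is immediate because $f_{n-1}\in I$. For the reverse inclusion I would take a monomial $m$ with $mf_{n-1}\in I^t$ and fix a witnessing exponent vector $(a_1,\ldots,a_{n-1})$ for $mf_{n-1}$ as above. If $a_{n-1}\geq 1$, decrementing $a_{n-1}$ by one produces a vector of total degree $t-1$ witnessing $m\in I^{t-1}$ (the extra $\omega_{n-1}$ that $f_{n-1}$ contributed to the $x_{n-1}$- and $x_n$-exponents exactly absorbs the decrement). If $a_{n-1}=0$, I instead decrement some $a_j$ with $j\neq n-1$ and $a_j\geq 1$; then the $x_n$-exponent bound drops by $\omega_j$ while I have $\omega_{n-1}$ to spare, and the inequality $\sum\omega_ia_i-\omega_j\leq b_n$ holds precisely because $\omega_j\geq\omega_{n-1}$. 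This second case, where minimality of $\omega_{n-1}$ is essential, is the main obstacle, though it is purely bookkeeping once the membership criterion is set up.

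Part (2) reduces to a short degree count. Since $(x_n^{\omega_{n-1}})$ is trivially contained in the left-hand side, it suffices to prove $I^t:x_{n-1}^{\omega_{n-1}}\subseteq (x_n^{\omega_{n-1}})$. If $mx_{n-1}^{\omega_{n-1}}\in I^t$, then multiplying by $x_{n-1}^{\omega_{n-1}}$ leaves the $x_n$-exponent of $m$ unchanged, so the witnessing vector gives $b_n\geq\sum_i\omega_ia_i\geq\omega_{n-1}\sum_ia_i=\omega_{n-1}t\geq\omega_{n-1}$, using $\omega_i\geq\omega_{n-1}$ for all $i$. Hence $x_n^{\omega_{n-1}}\mid m$, which is the desired inclusion.

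For part (3), set $J:=I(G_\omega\setminus x_{n-1})=(f_1,\ldots,f_{n-2})$, so that $I=J+(f_{n-1})$. The inclusion $(J^t,x_{n-1}^{\omega_{n-1}})\subseteq (I^t,x_{n-1}^{\omega_{n-1}})$ is clear from $J\subseteq I$, and for the reverse it is enough to show $I^t\subseteq (J^t,x_{n-1}^{\omega_{n-1}})$. Here I would expand $I^t=\sum_{k=0}^{t}J^{t-k}(f_{n-1}^k)$: the $k=0$ summand is $J^t$, while for each $k\geq 1$ the factor $f_{n-1}^k=x_{n-1}^{k\omega_{n-1}}x_n^{k\omega_{n-1}}$ is divisible by $x_{n-1}^{\omega_{n-1}}$, so that summand lies in $(x_{n-1}^{\omega_{n-1}})$. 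Thus every generator of $I^t$ lies in $(J^t,x_{n-1}^{\omega_{n-1}})$, finishing the proof. I note that the ordering hypothesis enters only through $\omega_{n-1}=\min_i\omega_i$ in parts (1) and (2), and that the arguments in fact go through for all $t\geq 1$.
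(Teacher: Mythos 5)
Your proof is correct and takes essentially the same route as the paper's: all three parts are verified monomial-by-monomial, with the same case split in (1) on whether the generator $(x_{n-1}x_n)^{\omega_{n-1}}$ occurs in a witnessing factorization of $m f_{n-1}$ (swapping out some other generator via $\omega_j \geq \omega_{n-1}$ when it does not), the same degree count in the $x_n$-exponent for (2), and the same observation for (3) that any generator of $I^t$ involving $x_{n-1}$ is divisible by $x_{n-1}^{\omega_{n-1}}$. The only difference is presentational — you encode factorizations as exponent-vector inequalities, where the paper writes $u\cdot(\cdot)=u_{i1}\cdots u_{it}h$ and invokes $N_G(x_{n-1})=\{x_n\}$ — and your remark that the identities hold already for $t\geq 1$ is a valid (minor) strengthening.
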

\begin{proof}
(1) For any monomial  $u$ in $\mathcal{G}(I(G_\omega)^t : (x_{n-1}x_n)^{\omega_{n-1}})$, we have  $u(x_{n-1}x_n)^{\omega_{n-1}} \in I(G_\omega)^t$. Let $u(x_{n-1}x_n)^{\omega_{n-1}}=u_{i1} \cdots u_{it}h$, where each $u_{ij} \in \mathcal{G}(I(G_\omega))$ and $h$ is a monomial.
 If there exists some  $j\in [t]$ such that $x_{n-1}|u_{ij}$, then $u_{ij}=(x_{n-1}x_n)^{\omega_{n-1}}$
 because $N_G(x_{n-1})=\{x_n\}$.
This implies that $u\in I(G_\omega)^{t-1}$. If $x_{n-1}\nmid u_{ij}$  for all $j\in [t]$, then  $x_{n-1}|h$. Thus , it can be concluded that
$u\in I(G_\omega)^{t-1}$, since $x_n^{\omega_{n-1}}|u_{ij}$  for all $j\in [t]$.

(2) For any monomial $u \in \mathcal{G}(I(G_\omega)^t : x_{n-1}^{\omega_{n-1}})$, we have $ux_{n-1}^{\omega_{n-1}}=u_{i1}\cdots u_{it}h$ for some monomial $h$, where each $u_{ij} \in \mathcal{G}(I(G_\omega))$. Let $u_{ij}$
be represented as $u_{ij}=(x_{ij}x_n)^{\omega_{ij}}$ with $ij \in [n-1]$, then  it can be inferred that $x_n^{\omega_{ij}}|u$. Therefore, $x_n^{\omega_{n-1}}|u$, since $\omega_{ij} \ge \omega_{n-1}$. This forces  that $u\in (x_n^{\omega_{n-1}})$.

(3) It is clear that $(I(G_\omega\setminus {x_{n-1}})^t,x_{n-1}^{\omega_{n-1}}) \subseteq (I(G_\omega)^t,x_{n-1}^{\omega_{n-1}})$. If the  monomial $u \in \mathcal{G}(I(G_\omega)^t) \setminus \mathcal{G}(I(G_\omega\setminus {x_{n-1}})^t)$, then $x_{n-1}|u$. It follows that $(x_{n-1}x_n)^{\omega_{n-1}}|u$, since $N_{G}(x_{n-1})=\{x_n\}$. This implies that $u \in (x_{n-1}^{\omega_{n-1}})$.$\hfill$
\end{proof}

\begin{Theorem}\label{starpower}
Let $G_\omega$ be an edge-weighted star graph as in Lemma \ref{starcolon}. Then,   for $t\ge 2$, we have
\begin{itemize}
\item[(1)]$\depth(S/I(G_\omega)^t)=1$.
\item[(2)]$\reg(S/I(G_\omega)^t)=2(t-1)\omega_1+\reg(S/I(G_\omega))$.
\end{itemize}
\end{Theorem}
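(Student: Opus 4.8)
The plan is to induct using the short exact sequence obtained from the colon and quotient by the monomial $f:=x_{n-1}^{\omega_{n-1}}$; the base cases are $t=1$ (which is Theorem \ref{star}) and $n=2$ (where $I(G_\omega)=((x_1x_n)^{\omega_1})$ is principal, $I(G_\omega)^t=((x_1x_n)^{t\omega_1})$, and both claims follow at once). Writing $I=I(G_\omega)$, I would feed the two outer terms of
\[
0\longrightarrow \frac{S}{I^t:f}(-\omega_{n-1})\stackrel{\cdot f}{\longrightarrow}\frac{S}{I^t}\longrightarrow \frac{S}{(I^t,f)}\longrightarrow 0
\]
into Lemma \ref{exact}. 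The induction is really on $n+t$: the colon term drops $t$ by one (same $n$), while the quotient term drops $n$ by one (same $t$).

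First I would pin down the colon term. Combining Lemma \ref{starcolon}(2), which says $I^t:f\subseteq (x_n^{\omega_{n-1}})$, with Lemma \ref{starcolon}(1), which gives $(I^t:f):x_n^{\omega_{n-1}}=I^t:(x_{n-1}x_n)^{\omega_{n-1}}=I^{t-1}$, yields the clean identity $I^t:f=x_n^{\omega_{n-1}}I^{t-1}$ (a monomial ideal contained in $(x_n^{\omega_{n-1}})$ equals $x_n^{\omega_{n-1}}$ times its colon by $x_n^{\omega_{n-1}}$). Since $x_n$ is a nonzerodivisor, multiplication by $x_n^{\omega_{n-1}}$ is a graded isomorphism $I^{t-1}(-\omega_{n-1})\xrightarrow{\ \sim\ }x_n^{\omega_{n-1}}I^{t-1}$, so the minimal free resolution of $I^t:f$ is that of $I^{t-1}$ shifted by $\omega_{n-1}$. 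Hence $\pd(S/(I^t:f))=\pd(S/I^{t-1})$ and $\reg(S/(I^t:f))=\reg(S/I^{t-1})+\omega_{n-1}$. Using the induction on $t$ and Auslander--Buchsbaum for depth, the colon term $M=\frac{S}{I^t:f}(-\omega_{n-1})$ then satisfies $\depth(M)=\depth(S/I^{t-1})=1$ and $\reg(M)=\reg(S/I^{t-1})+2\omega_{n-1}=2(t-2)\omega_1+\reg(S/I)+2\omega_{n-1}$.

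Next I would compute the quotient term $P=S/(I^t,f)$. By Lemma \ref{starcolon}(3), $(I^t,f)=(I(G_\omega\setminus x_{n-1})^t,x_{n-1}^{\omega_{n-1}})$, a sum of ideals in the disjoint variable sets $\{x_1,\dots,x_{n-2},x_n\}$ and $\{x_{n-1}\}$; note that $G_\omega\setminus x_{n-1}$ is again a weighted star with $n-1$ vertices whose largest weight is still $\omega_1$, since we deleted the smallest one. Applying Lemma \ref{sum2} together with the induction on $n$ and the base formula of Theorem \ref{star}, and using $\reg(\KK[x_{n-1}]/(x_{n-1}^{\omega_{n-1}}))=\omega_{n-1}-1$ and $\depth(\KK[x_{n-1}]/(x_{n-1}^{\omega_{n-1}}))=0$, gives $\reg(P)=\reg(S''/I(G_\omega\setminus x_{n-1})^t)+(\omega_{n-1}-1)=2(t-1)\omega_1+\reg(S/I)$ and $\depth(P)=\depth(S''/I(G_\omega\setminus x_{n-1})^t)+0=1$, where $S''=\KK[x_1,\dots,x_{n-2},x_n]$.

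Finally I would combine. For depth, $\depth(M)=\depth(P)=1$ and $\depth(P)=1\neq 0=\depth(M)-1$, so Lemma \ref{exact}(2) forces $\depth(S/I^t)=1$. For regularity, the key numeric point is $\omega_{n-1}\le\omega_1$, which makes $\reg(M)=2(t-2)\omega_1+\reg(S/I)+2\omega_{n-1}\le 2(t-1)\omega_1+\reg(S/I)=\reg(P)$; hence $\reg(M)-1<\reg(P)$, the exceptional case of Lemma \ref{exact}(1) cannot occur, and $\reg(S/I^t)=\max\{\reg(M),\reg(P)\}=\reg(P)=2(t-1)\omega_1+\reg(S/I)$, as desired. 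The part I expect to require the most care is the identification $I^t:f=x_n^{\omega_{n-1}}I^{t-1}$ and the resulting control of $\reg(M)$ and $\depth(M)$: everything hinges on the fact that this colon only introduces the smallest weight $\omega_{n-1}$, so the colon term is regularity-dominated by the quotient term and the inductive formula closes up with $\omega_1$ rather than with $\omega_{n-1}$.
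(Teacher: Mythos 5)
Your proof is correct, and its skeleton coincides with the paper's: the same induction on $n$ and $t$ (base cases $t=1$ via Theorem \ref{star} and $n=2$ trivial), the same main exact sequence built from $f=x_{n-1}^{\omega_{n-1}}$, the same identifications from Lemma \ref{starcolon}, and the same use of Lemmas \ref{sum2} and \ref{exact}. The one genuine difference is how you treat the colon term $S/(I^t:f)$. The paper runs a \emph{second} short exact sequence, coloning and quotienting by $x_n^{\omega_{n-1}}$, identifies its ends as $S/I^{t-1}$ and $S/(x_n^{\omega_{n-1}})$ via Lemma \ref{starcolon}(1),(2), and applies Lemma \ref{exact} a second time. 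You instead upgrade Lemma \ref{starcolon}(1),(2) to the ideal identity $I^t:f=x_n^{\omega_{n-1}}I^{t-1}$ (using that a monomial ideal contained in $(g)$ equals $g$ times its colon by $g$), so that $I^t:f\cong I^{t-1}(-\omega_{n-1})$ as graded modules and the Betti numbers, hence $\reg$ and $\pd$, of the colon term are read off exactly by a degree shift. This buys you something: you get equalities for the colon term without having to verify the non-degeneracy hypotheses of Lemma \ref{exact} for a second sequence (the paper's route needs, e.g., $\reg(S/(x_n^{\omega_{n-1}}))\neq \reg(S/I^{t-1}(-\omega_{n-1}))-1$, which holds but is an extra check), and it isolates the key numerical point cleanly — the colon only costs $2\omega_{n-1}\le 2\omega_1$, so the quotient term dominates and the recursion closes with $\omega_1$. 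Your final applications of Lemma \ref{exact} (the depth case $1\neq 0$ and the regularity case $\reg(P)\ge\reg(M)$) are exactly the verifications the paper's proof implicitly relies on.
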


\begin{proof} Let $I=I(G_\omega)$. We will prove the assertions  by induction on $n$ and $t$.
 The case where  $n=2$  is trivial.   
 
 In the following, we assume that $n \geq 3$ and that the results hold for $n-1$ and $t-1$.
Using Lemma \ref{starcolon}, Lemma \ref{sum2}, Theorem  \ref{star} and the inductive hypothesis, we can conclude that
\begin{align*}
\depth(S/((I^t : x_{n-1}^{\omega_{n-1}}) : x_n^{\omega_{n-1}}))&=\depth(S/I^{t-1})=1,\\
\depth(S/((I^t : x_{n-1}^{\omega_{n-1}}),x_n^{\omega_{n-1}}))&=\depth(S/(x_n^{\omega_{n-1}}))=n-1,\\
\depth(S/(I^t,x_{n-1}^{\omega_{n-1}}))&=\depth(S/I((G_\omega\setminus {x_{n-1}})^t,x_{n-1}^{\omega_{n-1}}))=1,\\
\reg(S/((I^t : x_{n-1}^{\omega_{n-1}}) : x_n^{\omega_{n-1}}))&=\reg(S/I^{t-1})=2(t-2)\omega_1+\reg(S/I(G_\omega)),\\
\reg(S/((I^t : x_{n-1}^{\omega_{n-1}}),x_n^{\omega_{n-1}}))&=\reg(S/(x_n^{\omega_{n-1}}))=\omega_{n-1}-1,\\
 \reg(S/(I^t,x_{n-1}^{\omega_{n-1}}))&=\reg(S/I((G_\omega\setminus {x_{n-1}})^t,x_{n-1}^{\omega_{n-1}}))\\
 &=2(t-1)\omega_1+\reg(S/I(G_\omega)).
\end{align*}
The desired results hold by   Lemma \ref{exact} and  the following  short exact sequences
\begin{gather*}
\begin{matrix}
 0 & \rightarrow &\frac{S}{I^t : x_{n-1}^{\omega_{n-1}}}(-\omega_{n-1})  & \stackrel{ \cdot x_{n-1}^{\omega_{n-1}}} \longrightarrow  &\frac{S}{I^t} & \rightarrow & \frac{S}{(I^t,x_{n-1}^{\omega_{n-1}})} & \rightarrow & 0,\\
  0 & \rightarrow & \frac{S}{(I^t : x_{n-1}^{\omega_{n-1}}) : x_n^{\omega_{n-1}}}(-\omega_{n-1}) &  \stackrel{ \cdot  x_{n}^{\omega_{n-1}}}  \rightarrow & \frac{S}{I^t : x_{n-1}^{\omega_{n-1}}} &\rightarrow &  \frac{S}{((I^t : x_{n-1}^{\omega_{n-1}}),x_n^{\omega_{n-1}})} & \rightarrow & 0.
 \end{matrix}
  \end{gather*}
\end{proof}

\section{path graph}
\label{sec:path}

This section provides precise formulas for the regularity of powers of the edge ideal of an edge-weighted integrally closed path using Betti splitting and polarization approaches. Additionally, it offers lower bounds on the depth of powers of this edge ideal. The section begins by defining polarization.

\begin{Definition} \label{polarization} {\em (\cite[Definition 2.1]{SF})}
Let $I\subset S$ be a monomial ideal with $\mathcal{G}(I)=\{u_1,\ldots,u_m\}$ where $u_i=\prod\limits_{j=1}^n x_j^{a_{ij}}$ for $i=1,\ldots,m$.
The polarization of $I$, denoted by $I^{\mathcal{P}}$, is a squarefree monomial ideal in the polynomial ring $S^{\mathcal{P}}$
$$I^{\mathcal{P}}=(\mathcal{P}(u_1),\ldots,\mathcal{P}(u_m))$$
where $\mathcal{P}(u_i)=\prod\limits_{j=1}^n \prod\limits_{k=1}^{a_{ij}} x_{jk}$ is a squarefree monomial  in $S^{\mathcal{P}}=\KK[x_{j1},\ldots,x_{ja_j}\mid j=1,\ldots,n]$ and $a_j=\max\{a_{ij}| i=1,\ldots,m\}$ for  $1\leq j\leq n$.
\end{Definition}

A monomial ideal  and its polarization  share numerous homological and
algebraic properties.  The following is a  useful property of polarization.

\begin{Lemma}
\label{polar}{\em (\cite[Corollary 1.6.3]{HH})} Let $I\subset S$ be a monomial ideal and $I^{\calP}\subset S^{\calP}$ be its polarization.
Then
\begin{itemize}
\item[(1)] $\beta_{ij}(I)=\beta_{ij}(I^{\calP})$ for all $i$ and $j$,
\item[(2)] $\reg(I)=\reg(I^{\calP})$,
\item[(3)] $\pd(I)=\pd(I^{\calP})$.
\end{itemize}
\end{Lemma}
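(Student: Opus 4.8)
The plan is to realize $I$ as a specialization of its polarization $I^{\calP}$ by a regular sequence of linear forms, so that all three assertions follow at once from a single fact: reduction modulo such a sequence leaves the graded Betti table unchanged. First I would make the depolarization explicit. With the notation of Definition \ref{polarization}, consider the graded $\KK$-algebra surjection $\pi\colon S^{\calP}\to S$ sending $x_{jk}\mapsto x_j$ for all $j,k$, and set $\theta_{jk}=x_{jk}-x_{j1}$ for $1\le j\le n$ and $2\le k\le a_j$. These are linearly independent linear forms generating $\ker\pi$, so $S^{\calP}/\underline{\theta}\cong S$, where $\underline{\theta}=(\theta_{jk})$. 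Since $\pi(\calP(u_i))=\prod_j x_j^{a_{ij}}=u_i$, this isomorphism carries $I^{\calP}$ onto $I$, whence $(S^{\calP}/I^{\calP})/\underline{\theta}(S^{\calP}/I^{\calP})\cong S/I$.

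The heart of the argument is to prove that $\underline{\theta}$ is a regular sequence on $S^{\calP}/I^{\calP}$, and this is the step I expect to be the main obstacle, since it is where the combinatorics of polarization really enters. The key structural feature is \emph{consecutiveness}: if $x_{jk}$ with $k\ge 2$ divides a generator $\calP(u_i)$, then so does $x_{j,k-1}$. Because $I^{\calP}$ is squarefree, its associated primes are generated by subsets of the variables, and a difference of two distinct variables lies in a monomial prime $\pp$ only if both variables do. Using consecutiveness one checks that no minimal prime of $I^{\calP}$ can contain both $x_{jk}$ and $x_{j,k-1}$: if it did, then by minimality some generator would be covered by $x_{jk}$ alone, yet consecutiveness forces $x_{j,k-1}$ to divide that same generator, a contradiction. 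Hence each difference avoids every associated prime and is a nonzerodivisor. To make this rigorous for the whole sequence I would run the argument through \emph{partial} polarizations, building $I^{\calP}$ from $I$ by successive one-step polarizations (splitting off a single copy of a single variable), so that at each stage the relevant ideal is again a monomial ideal to which the associated-prime criterion applies, and then compose the steps.

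Granting regularity, the conclusion is a standard homological specialization. Let $F_\bullet$ be the minimal graded free resolution of $S^{\calP}/I^{\calP}$ over $S^{\calP}$. Since $\underline{\theta}$ is $S^{\calP}/I^{\calP}$-regular, the Koszul homology $H_i(\underline{\theta};S^{\calP}/I^{\calP})$ vanishes for $i>0$, so $\Tor_i^{S^{\calP}}(S^{\calP}/I^{\calP},S)=0$ for $i>0$ and $F_\bullet\otimes_{S^{\calP}}S$ is acyclic, resolving $S/I$ over $S=S^{\calP}/\underline{\theta}$. The entries of the differentials of $F_\bullet$ lie in the homogeneous maximal ideal and so have positive degree; reducing modulo the linear forms $\underline{\theta}$ keeps them of positive degree, hence inside the maximal ideal of $S$. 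Thus $F_\bullet\otimes_{S^{\calP}}S$ remains minimal and is the minimal graded free resolution of $S/I$ over $S$, with identical ranks in every homological and internal degree. Therefore $\beta_{ij}(S^{\calP}/I^{\calP})=\beta_{ij}(S/I)$ for all $i,j$.

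Finally, using the standard shift $\beta_{ij}(I)=\beta_{i+1,j}(S/I)$ (and the same for $I^{\calP}$) this yields statement (1). Statements (2) and (3) then follow immediately, either directly, since $\reg$ and $\pd$ are defined purely through the vanishing pattern of the graded Betti numbers we have just matched, or by applying Lemma \ref{quotient} to both $I$ and $I^{\calP}$ to pass between the invariants of the ideals and of the quotient rings.
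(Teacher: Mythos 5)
The paper does not actually prove this lemma; it quotes it from \cite[Corollary 1.6.3]{HH}, so the relevant comparison is with the standard argument in that reference, which your plan reproduces: realize $S/I$ as the quotient of $S^{\calP}/I^{\calP}$ by a regular sequence of linear variable-differences, then specialize the minimal graded free resolution. Your second half is correct and complete: regularity of $\underline{\theta}$ gives Koszul vanishing, hence $F_\bullet\otimes_{S^{\calP}}S$ is acyclic; minimality survives because the reduced differentials still have entries of positive degree; and the shift $\beta_{ij}(I)=\beta_{i+1,j}(S/I)$ converts the statement about quotients into statements (1)--(3).

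There is, however, a genuine gap at exactly the step you flag as the main obstacle. Your nonzerodivisor argument is a minimal-prime (vertex-cover) argument: it needs that the associated primes of the ideal are its minimal primes, which holds for $I^{\calP}$ because squarefree monomial ideals are radical. But in your induction through partial polarizations the intermediate ideals are genuinely non-squarefree (the top variable of each block carries the excess exponent), and such monomial ideals can have \emph{embedded} associated primes; a difference of variables is a zerodivisor as soon as it lies in some associated prime, so ruling out the minimal ones is not enough, and ``by minimality some generator would be covered by $x_{jk}$ alone'' has no counterpart for an embedded prime. The repair is short but different: every associated prime of a monomial ideal $J$ has the form $J:w$ for some monomial $w\notin J$. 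Suppose $J$ is a partial polarization whose $j$-th block has depth $d\ge 2$ and $J:w$ contains both $x_{jd}$ and $x_{j,d-1}$. From $x_{j,d-1}w\in J$, some generator $u'$ divides $x_{j,d-1}w$ but not $w$; since $x_{j,d-1}$ occurs in every generator with exponent at most one, this forces $x_{j,d-1}\nmid w$. From $x_{jd}w\in J$, some generator $u$ divides $x_{jd}w$ but not $w$, so $x_{jd}\mid u$, hence $x_{j,d-1}\mid u$ by consecutiveness, hence $x_{j,d-1}\mid w$ --- a contradiction. Finally, a cosmetic point: you define $\theta_{jk}=x_{jk}-x_{j1}$ but argue with $x_{jk}$ and $x_{j,k-1}$; use the consecutive differences $x_{jk}-x_{j,k-1}$ throughout (they generate the same ideal as your $\theta_{jk}$), since with those each successive quotient is literally a partial polarization and the induction composes cleanly.
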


\begin{Definition}{\em (\cite[Definition 1.4.1]{HH})}
\label{integrally closed_1} Let  $I$  be an ideal in a ring $R$.
    An element $f \in R$ is  said to be {\em integral} over $I$ if there exists an equation
    \[
    f^k+c_1f^{k-1}+\dots+c_{k-1}f+c_k=0 \text{\ \ with\ \ }c_i \in I^i.
    \]
   The set $\overline{I}$ of elements in $R$ which are integral over $I$  is the \emph{integral closure} of $I$.
If $I=\overline{I}$, then $I$  is said to be  {\em integrally closed}.
    An edge-weighted graph $G_\omega$ is said to be  {\em integrally closed} if its edge ideal $I(G_\omega)$ is integrally closed.
    \end{Definition}

According to \cite[Theorem 1.4.6]{HH}, every edge-weighted graph $G_\omega$ with trivial weights is integrally closed. The following lemma offers a complete characterization of a non-trivially edge-weighted graph that is integrally closed.

 \begin{Lemma}{\em (\cite[Theorem 3.6]{DZCL})}
    \label{integral}
    	 If  $G_\omega$ is a  non-trivially edge-weighted graph, then $I(G_\omega)$ is  integrally closed if and only if  $G_\omega$  does not contain  any of the following three graphs as induced subgraphs.
    \begin{enumerate}
    \item  A path $P_\omega^2$ of length $2$ where  all edges have non-trivial weights.
    \item The  disjoint union $P_\omega^2\sqcup P_\omega^2$ of two paths $P_\omega^2$ where all edges have non-trivial weights.
    \item A $3$-cycle $C_\omega^3$ where all edges have non-trivial weights.
\end{enumerate}
\end{Lemma}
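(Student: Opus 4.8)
The plan is to recast the algebraic condition of Definition \ref{integrally closed_1} in the combinatorial language of the Newton polyhedron. Recall the standard fact (see \cite{HH}) that for a monomial ideal $I$ whose generators $\mathcal{G}(I)$ have exponent vectors $\mathbf{a}_1,\ldots,\mathbf{a}_m$, the integral closure $\overline{I}$ is the monomial ideal whose exponent vectors are exactly the lattice points of the Newton polyhedron
\[
\mathrm{NP}(I)=\conv\{\mathbf{a}_1,\ldots,\mathbf{a}_m\}+\RR_{\ge 0}^{n};
\]
equivalently, $x^{\mathbf{c}}\in\overline{I}$ if and only if $x^{k\mathbf{c}}\in I^{k}$ for some $k\ge 1$. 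Thus $I(G_\omega)$ is integrally closed precisely when every lattice point of $\mathrm{NP}(I(G_\omega))$ is divisible by a generator. A reduction I would use throughout is that all three forbidden graphs are prescribed as \emph{induced} subgraphs: if $u$ is a monomial supported on the vertex set of such an induced subgraph $H$, then $u\in I(G_\omega)$ if and only if $u$ is divisible by a generator coming from an edge of $H$, since any generator dividing $u$ must involve only variables in $V(H)$ and hence corresponds to an edge inside the induced subgraph. This lets me argue entirely locally on $H$.

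For the necessity direction I would argue by contraposition, exhibiting in each case an explicit monomial in $\overline{I}\setminus I$ as a ``rounded midpoint'' of two generator exponents. If $H$ consists of two non-trivially weighted edges (adjacent or disjoint), with generator exponents $\mathbf{a}$ and $\mathbf{b}$, set $u=x^{\mathbf{c}}$ with $\mathbf{c}=\lceil(\mathbf{a}+\mathbf{b})/2\rceil$ taken coordinatewise. Then $2\mathbf{c}\ge \mathbf{a}+\mathbf{b}$ yields $u^{2}\in I^{2}$, so $u\in\overline{I}$, while the two weights being $\ge 2$ force $\mathbf{c}$ to miss each of $\mathbf{a}$ and $\mathbf{b}$ in a suitable coordinate, so $u\notin I$; this treats configuration (1) and the disjoint-edge content of configuration (2) uniformly (and (2) as literally stated in any case contains an induced $P_\omega^2$). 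For the triangle $C_\omega^3$ with edge weights $a,b,c\ge 2$ the same idea works after ordering the weights: a direct check shows that the rounded midpoint of the two generators meeting at the vertex opposite the median-weight edge avoids all three generators, hence lies in $\overline{I}\setminus I$.

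For the sufficiency direction, which I expect to be the crux, I would first extract the combinatorial structure forced by the three prohibitions. Writing $N$ for the set of non-trivially weighted edges, absence of configuration (2) forces the edges of $N$ to pairwise intersect, absence of (3) forbids a triangle inside $N$, and a pairwise-intersecting triangle-free family of edges is a star; finally absence of (1) forces, for any two spokes $\{v,u_i\},\{v,u_j\}$ of this star, the edge $\{u_i,u_j\}$ to be present and necessarily of trivial weight. Hence $N$ is a star centered at some $v$ whose leaves form a clique of trivially weighted edges, all remaining edges of $G$ being trivial. The heart of the proof is then to show that for such a graph $\mathrm{NP}(I(G_\omega))$ has no lattice point outside $I(G_\omega)$: given $\mathbf{c}\in\mathrm{NP}(I)\cap\ZZ^{n}$, I would write it as a convex combination of generator exponents plus a nonnegative vector and show that the forced trivial ``leaf'' edges $\{u_i,u_j\}$, together with the trivial part of $G$, cover exactly the midpoint-type lattice points responsible for the holes in the forbidden configurations. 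The main obstacle is the genuinely mixed case, where $\mathbf{c}$ is a nontrivial combination of several spoke generators $(x_vx_{u_i})^{\omega_i}$: one must show that if $\mathbf{c}$ fails to dominate every spoke generator, then its restriction to two leaves $u_i,u_j$ is already divisible by the squarefree generator $x_{u_i}x_{u_j}$, so that $x^{\mathbf{c}}\in I(G_\omega)$ after all. I would organize this final step by induction on the number of spokes, deleting one leaf at a time and using that the clique structure among the remaining leaves persists.
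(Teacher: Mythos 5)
The paper does not actually prove this lemma; it is imported verbatim from \cite[Theorem 3.6]{DZCL}, so there is no internal proof to compare against and your proposal must be judged on its own merits. Your necessity direction is sound: the Newton-polyhedron description of $\overline{I}$, the observation that an induced subgraph permits a purely local divisibility argument, and the rounded-midpoint monomials do produce elements of $\overline{I}\setminus I$ in all three configurations (and your choice of the vertex opposite the median-weight edge in the triangle is exactly what makes the third generator fail to divide the midpoint).

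The sufficiency direction, however, contains a genuine gap. The claim that ``absence of configuration (2) forces the edges of $N$ to pairwise intersect'' misreads the statement: configuration (2) is forbidden only as an \emph{induced} subgraph, so two disjoint non-trivially weighted edges are perfectly admissible provided some edge of $G$ joins their vertex sets. (Note that you apply the induced reading to configuration (1) --- that is how you obtain the trivial leaf edges of your star --- but the non-induced reading to configuration (2); the two readings are inconsistent.) Consequently your structural conclusion, that the non-trivial edges form a star whose leaves span a clique of trivial edges, is false, and it excludes precisely the graphs this paper is about: the path $P_{\omega}^4$ with $I=((x_1x_2)^2,\,x_2x_3,\,(x_3x_4)^2)$ avoids all three induced configurations and is integrally closed --- the midpoint hole $x_1x_2x_3x_4$ created by the two non-trivial generators is divisible by the connecting trivial generator $x_2x_3$ --- yet its two non-trivial edges are disjoint and form no star. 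More generally, the integrally closed paths with $\omega_i,\omega_{i+2}\ge 2$ and $\omega_{i+1}=1$ appearing in Corollary \ref{integ}, Theorem \ref{path} and Theorem \ref{small2}(3) all lie outside your structure theorem. A correct sufficiency proof must handle the cases where non-trivial edges are disjoint but joined by an edge of $G$, and where adjacent non-trivial edges are completed to a triangle by a trivial edge; your induction on the spokes of a star never reaches them.
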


From the lemma above, we can derive
\begin{Corollary} \label{integ}
Let $P_{\omega}^n$ be a  non-trivially weighted integrally closed path  with $n$ vertices, then then it can have  at most two edges with non-trivial weights.
\end{Corollary}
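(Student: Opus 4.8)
The plan is to argue by contradiction, feeding a hypothetical third heavy edge into the forbidden-induced-subgraph criterion of Lemma \ref{integral}. Label the path $x_1 - x_2 - \cdots - x_n$, write $e_i = \{x_i, x_{i+1}\}$ for $i \in [n-1]$, and call $e_i$ \emph{heavy} when its weight satisfies $\omega_i \geq 2$. Non-triviality guarantees at least one heavy edge; assuming there were at least three, I would fix three of them $e_a, e_b, e_c$ with $a < b < c$. Since $P_\omega^n$ is acyclic, configuration (3) of Lemma \ref{integral} can never occur, so the whole argument reduces to exhibiting one of configurations (1) or (2) among $e_a, e_b, e_c$, which would contradict integral closedness.

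The core is a short case split on the two gaps $b-a$ and $c-b$. First, if either gap equals $1$ then the corresponding two heavy edges share a vertex, and the three vertices they span induce a weighted path of length two all of whose edges are heavy, i.e. the forbidden configuration (1). Otherwise both gaps are at least $2$, so $c - a \geq 4$; then $\{x_a, x_{a+1}\}$ and $\{x_c, x_{c+1}\}$ are disjoint and, because $c \geq a+4$, no edge of the path joins the two pairs, so $\{x_a, x_{a+1}, x_c, x_{c+1}\}$ induces two vertex-disjoint heavy edges --- the forbidden configuration (2). Either outcome contradicts Lemma \ref{integral}, forcing at most two heavy edges.

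The one genuinely delicate point, and the step I would check most carefully, is the second case: I must be sure that a pair of non-adjacent heavy edges induces a disjoint $P_\omega^2 \sqcup P_\omega^2$ and not a connected weighted $P_4$. This distinction is exactly why I pass to the extreme pair $e_a, e_c$ rather than to a pair that is merely consecutive in the list. Two heavy edges at distance exactly two, such as $e_a$ and $e_{a+2}$, are joined through the intermediate edge $e_{a+1}$ and hence induce a connected weighted $P_4$ rather than two disjoint edges; this is precisely the extremal integrally closed pattern realizing two heavy edges, and it is \emph{not} among the forbidden subgraphs. Assuming all gaps are $\geq 2$ yields $c - a \geq 4 > 2$, which is what pushes $e_a, e_c$ safely past this threshold into the disjoint configuration. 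No separate analysis of small $n$ is required: whenever $n \leq 5$ three pairwise non-adjacent edges simply do not fit, so the first case always applies, while for $n \geq 6$ both cases are live.
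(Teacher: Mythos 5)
Your proof is correct and is essentially the paper's own argument made explicit: the paper derives Corollary \ref{integ} directly from Lemma \ref{integral} with no written details, and your case split on the gaps --- a gap of $1$ produces two adjacent heavy edges, i.e.\ forbidden configuration (1), while gaps all $\geq 2$ force the extreme pair to lie at distance $\geq 4$ and hence induce the disjoint pair of heavy edges of forbidden configuration (2) --- is precisely that derivation spelled out. Your careful observation that two heavy edges at distance exactly $2$ induce a connected weighted $P_4$ rather than a disjoint union, and so are \emph{not} forbidden, correctly isolates the extremal pattern (heavy edges $e_i$ and $e_{i+2}$) on which the remainder of the paper's analysis of integrally closed paths rests.
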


The following lemma provides exact formulas for the regularity and depth of powers of the edge ideal of integrally closed paths with trivial weights.
\begin{Lemma}{\em (\cite[Lemma 2.8]{SM}, \cite[Corollary 7.7.34]{SJ}, \cite[Theorem 4.7 and Remark 2.12]{BHT}, and \cite[Theorem 3.4]{MTV}}{\em )}
\label{trivialpath}
Let $P_{\omega}^n$ be a trivially weighted path  with $n$ vertices, then the following results hold:
 \begin{enumerate}
   \item  $\depth(S/I(P_{\omega}^n))=\lceil \frac{n}{3} \rceil$.
   \item   $\depth(S/I(P^n_\omega)^t)=\max\{\lceil \frac{n-t+1}{3} \rceil,1\}$.
   \item   $\reg(S/I(P_{\omega}^n))=\lfloor \frac{n+1}{3} \rfloor$.
 \item   $\reg(S/I(P^n_\omega)^t)=\lfloor \frac{n+1}{3} \rfloor+2(t-1)=\reg(S/I(P^n_\omega))+2(t-1)$.
 \end{enumerate}
\end{Lemma}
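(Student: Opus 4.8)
The plan is to handle the two regularity statements (3)--(4) and the two depth statements (1)--(2) by different mechanisms, in each case reducing the homological quantity to a purely combinatorial invariant of the path and then evaluating that invariant by an elementary recursion. Throughout I use that a trivially weighted $P^n_\omega$ is literally the classical path $P_n$, so $I(P^n_\omega)$ is the ordinary (squarefree) edge ideal and $P_n$ is a tree, hence chordal.

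For the regularity I would first invoke the theorem that for the edge ideal of a forest $G$ one has $\reg(S/I(G))=\nu(G)$, the induced matching number of $G$. A greedy argument gives the recursion $\nu(P_n)=1+\nu(P_{n-3})$ (place the leaf edge $\{x_1,x_2\}$ into the matching and delete $x_1,x_2,x_3$), with $\nu(P_1)=0$ and $\nu(P_2)=\nu(P_3)=1$; solving it yields $\nu(P_n)=\lfloor (n+1)/3\rfloor$, which is (3). For (4) I would appeal to the Beyarslan--H\`a--Trung formula for forests, $\reg(I(G)^t)=2t+\nu(G)-1$ for all $t\ge 1$ (see \cite{BHT}); rewriting it through $\reg(I)=\reg(S/I)+1$ from Lemma \ref{quotient} gives $\reg(S/I(P_n)^t)=\nu(P_n)+2(t-1)=\reg(S/I(P_n))+2(t-1)$, which is exactly (4).

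For the depth I would pass to the projective dimension via the Auslander--Buchsbaum formula $\depth(S/I^t)=n-\pd(S/I^t)$ recorded in the preliminaries. When $t=1$ the graded Betti numbers of $I(P_n)$ are explicitly known (see \cite{SJ}), giving $\pd(S/I(P_n))=\lfloor 2n/3\rfloor$, equivalently $\depth(S/I(P_n))=\lceil n/3\rceil$, which is (1); alternatively one runs an induction on $n$ from the short exact sequence attached to the leaf $x_1$, using Lemma \ref{sum2} to split off the isolated variable in $I(P_n):x_1$ and Lemma \ref{exact}(2) to transport the values, checking the residues $n\bmod 3$ separately. For (2) I would argue by double induction on $n$ and $t$ from the sequence $0\to S/(I^t:x_1)\to S/I^t\to S/(I^t,x_1)\to 0$, together with a colon relation reducing $I^t:x_1$ to data of $I^{t-1}$ and the identification of $(I^t,x_1)$ with the $t$-th power of the edge ideal of the shorter path $P_{n-1}$; Brodmann's stabilization, combined with $\ell(I(P_n))=n-1$, forces the eventual constant value $1$ and so accounts for the $\max\{\,\cdot\,,1\}$ truncation.

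The hard part will be the depth of the powers, statement (2). Lemma \ref{exact}(2) delivers only the \emph{lower} bound $\depth(N)\ge\min\{\depth(M),\depth(P)\}$, and its equality clause ``$\depth(P)\neq\depth(M)-1$'' genuinely fails in the boundary residue cases: already for $I(P_4)$ the naive leaf sequence returns the slack bound $1$ against the true value $2$, so the short exact sequence alone does not pin down the answer. Overcoming this requires either the finer bookkeeping of \cite{SM,MTV} or a matching \emph{upper} bound for $\depth(S/I^t)$, for instance through the explicit description of $\Ass(S/I^t)$ and the analytic-spread estimate $\depth(S/I^t)\le n-\ell(I)$. The regularity of the powers is for the same reason not reachable from Lemma \ref{exact} (the $P_4$ check shows its bounds are not tight), which is precisely why one must lean on the forest-specific theorem of \cite{BHT} rather than on iterated exact sequences.
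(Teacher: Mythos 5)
The paper does not actually prove this lemma: it is quoted as a known result, with the four citations \cite{SM}, \cite{SJ}, \cite{BHT}, \cite{MTV} serving in place of a proof. So your proposal is not competing with any argument in the paper, and judged on its own it is a correct reconstruction that bottoms out in the same sources. Your treatment of (3) via Zheng's theorem $\reg(S/I(G))=\nu(G)$ for forests together with the recursion $\nu(P_n)=1+\nu(P_{n-3})$, of (4) via the Beyarslan--H\`a--Trung formula $\reg(I(G)^t)=2t+\nu(G)-1$ combined with Lemma \ref{quotient}, and of (1) via Jacques' computation $\pd(S/I(P_n))=\lfloor 2n/3\rfloor=n-\lceil n/3\rceil$ are all correct, and these are exactly the results the paper points to. For (2), your own inductive sketch is not a proof, as you yourself concede: Lemma \ref{exact}(2) only yields the lower bound, and Brodmann's theorem with $\ell(I(P_n))=n-1$ pins down only the asymptotic constant (and even concluding that this constant is $1$ rather than $0$ requires knowing that bipartite edge ideals are normally torsion-free, so that $\mathfrak{m}\notin\Ass(S/I^t)$, not just Brodmann); the exact value $\max\{\lceil\frac{n-t+1}{3}\rceil,1\}$ for every $t$ genuinely requires the arguments of \cite{SM} and \cite{MTV}, which is precisely why the paper cites them instead of proving the formula. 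Your diagnosis of why the naive leaf exact sequence cannot be tight --- the $P_4$ case, where the depth of the quotient term lands exactly one below the depth of the colon term, so the equality clause of Lemma \ref{exact} fails --- is accurate, and it shows you located the real difficulty rather than papering over it.
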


Thus, we will now consider the non-trivial edge-weighted integrally closed path that satisfies the following conditions
\begin{Remark}
    \label{n-path} 
    Let  $n\ge 2$ be an integer and $P_{\omega}^n$  be a non-trivial  edge-weighted integrally closed  path  with the vertex set   $\{x_1,\ldots,x_n\}$ and the edge set  $\{e_1,\ldots,e_{n-1}\}$, where $e_i=\{x_i,x_{i+1}\}$ and $\omega_i=\omega(e_i)$ for all $i \in [n-1]$.
\end{Remark}

We begin by calculating the regularity  and  depth of  the edge ideal of a path $P_{\omega}^n$  with $n\le 4$.

\begin{Theorem}
\label{smalln} Let $P_{\omega}^n$ be a path as in Remark \ref{n-path}, where $n\le 4$. Let $\omega=\max\{\omega_1,\ldots,\omega_{n-1}\}$. Then 
\begin{itemize}
\item[(1)] If $n=2$, then $\reg (S/I(P_{\omega}^n))=2\omega-1$  and  $\depth (S/I(P_{\omega}^n))=1$.
\item[(2)] If $n=3$, then  $\reg (S/I(P_{\omega}^n))=2\omega-1$ and $\depth (S/I(P_{\omega}^n))=1$.
\item[(3)] If $n=4$, then  $\reg (S/I(P_{\omega}^n))=2\omega-1$ and $\depth (S/I(P_{\omega}^n))=2-a$, where  $a=\begin{cases}
            0,             & \text{if $\omega_2=1$,}        \\
            1, & \text{otherwise.}
        \end{cases}$
\end{itemize}		
\end{Theorem}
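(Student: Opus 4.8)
The plan is to use the integral-closedness hypothesis to cut down to a short list of weight patterns, and then to compute $\reg$ and $\pd$ of each by a single Betti splitting together with Corollary \ref{cor1}; the depth is recovered from $\pd$ via the Auslander-Buchsbaum formula $\depth(S/I)=n-\pd(S/I)$. The key structural input is Lemma \ref{integral} (equivalently Corollary \ref{integ}), which forbids two \emph{adjacent} edges of non-trivial weight. Hence for $n\le 4$ only finitely many patterns survive, and up to reversing the path $x_1\cdots x_n$ each pattern reduces, after peeling off one endpoint generator, to a principal ideal plus the edge ideal of a shorter path.

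First I would dispose of the small cases. For $n=2$ the ideal $I=((x_1x_2)^{\omega})$ is principal, so $\reg(S/I)=2\omega-1$ by the remark following Lemma \ref{quotient} and $\pd(S/I)=1$, giving $\depth(S/I)=1$. For $n=3$, Lemma \ref{integral} forces one of $\omega_1,\omega_2$ to equal $1$; after possibly reversing the path we may take $\omega_1=\omega$, $\omega_2=1$, so $I=((x_1x_2)^{\omega},x_2x_3)$. Splitting off the unique generator divisible by $x_3$, set $J=(x_2x_3)$ and $K=((x_1x_2)^{\omega})$; since $J$ is principal, Lemma \ref{spliting} makes $I=J+K$ a Betti splitting, and one checks that $J\cap K=(x_1^{\omega}x_2^{\omega}x_3)$ is again principal. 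Corollary \ref{cor1} then yields $\reg(I)=\max\{2,2\omega,(2\omega+1)-1\}=2\omega$ and $\pd(I)=\max\{0,0,1\}=1$, i.e. $\reg(S/I)=2\omega-1$ and $\depth(S/I)=1$.

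For $n=4$ I would enumerate the admissible patterns: by Corollary \ref{integ} the non-trivial edges are pairwise non-adjacent, so up to reversal exactly one of the following holds: only $e_1$ is non-trivial; only $e_2$ is non-trivial (forcing $\omega_1=\omega_3=1$); or $e_1$ and $e_3$ are non-trivial (forcing $\omega_2=1$). In each case I peel off the single generator containing an endpoint variable $x_1$ or $x_4$, so that $J$ is principal (hence Lemma \ref{spliting} applies) and $K$ is exactly the $n=3$ edge ideal already computed, with $\pd(K)=1$. Computing $J\cap K$ as the ideal of pairwise least common multiples, one finds that when $\omega_2=1$ (patterns ``$e_1$ only'' and ``$e_1,e_3$'') the intersection is principal, so $\pd(J\cap K)=0$ and Corollary \ref{cor1} gives $\pd(I)=1$, whence $\depth(S/I)=4-2=2$; whereas in the pattern ``$e_2$ only'' (so $\omega_2=\omega\ge 2$) the intersection equals $(x_1x_2^{\omega}x_3^{\omega},x_1x_2x_3x_4)=x_1x_2x_3\cdot(x_2^{\omega-1}x_3^{\omega-1},x_4)$, a monomial times a two-generator complete intersection in disjoint variables, whose Koszul resolution gives $\pd(J\cap K)=1$, hence $\pd(I)=2$ and $\depth(S/I)=4-3=1$. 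In every pattern each input to the regularity maximum in Corollary \ref{cor1} is at most $2\omega$ and at least one equals $2\omega$, so $\reg(I)=2\omega$ and $\reg(S/I)=2\omega-1$; since $\omega_2=1$ corresponds to the three depth-$2$ patterns and $\omega_2\ge 2$ to the single depth-$1$ pattern, this reproduces $\depth(S/I)=2-a$.

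The essential bookkeeping is the evaluation of $J\cap K$ in each pattern, and the main obstacle is the single case $\omega_2\ge 2$, where the intersection fails to be principal: the observation that $x_1x_2x_3\cdot(x_2^{\omega-1}x_3^{\omega-1},x_4)$ has projective dimension one rather than zero is exactly what raises $\pd(I)$ by one and so drops the depth from $2$ to $1$, accounting for the correction term $a$. Throughout I would invoke the non-triviality hypothesis $\omega\ge 2$ to guarantee that the term $2\omega$ dominates the constant contributions (namely $2$ and $\reg(J\cap K)-1$) in the regularity maximum, so that no degenerate coincidences occur.
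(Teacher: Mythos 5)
Your proposal is correct, but it proceeds by a genuinely different route than the paper. The paper proves Theorem \ref{smalln} by choosing a variable $x_j$, forming the short exact sequences built from $(I:x_j)$ and $(I,x_j)$ (and iterating once more in the case $\omega_1=\omega_3=1$), and then reading off depth and regularity from Lemma \ref{exact}; no Betti splitting appears until the $n\ge 5$ case (Theorem \ref{path}). You instead enumerate the admissible weight patterns via Corollary \ref{integ}, peel off a principal ideal $J$ at an endpoint so that Lemma \ref{spliting} applies, compute $J\cap K$ explicitly in each pattern, and conclude via Corollary \ref{cor1} and Auslander--Buchsbaum. I checked your intersections: with $J=(x_3x_4)$ or $J=((x_3x_4)^{\omega_3})$ and $K$ the non-trivial $n=3$ ideal, one generator of $J\cap K$ divides the other, so the intersection is indeed principal when $\omega_2=1$; and when $\omega_2=\omega\ge 2$, peeling $J=(x_1x_2)$ gives $J\cap K=x_1x_2x_3\cdot(x_2^{\omega-1}x_3^{\omega-1},x_4)$, a monomial shift of a two-generator complete intersection with $\pd(J\cap K)=1$ and $\reg(J\cap K)=2\omega+1$, which is exactly what raises $\pd(I)$ to $2$ and produces the correction $a=1$. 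Your approach buys uniformity: it is the same machinery the paper later deploys for $n\ge 5$, so small and large $n$ are treated identically, and Corollary \ref{cor1} delivers equalities outright, with no need to verify the non-degeneracy hypotheses ($\reg(P)\neq\reg(M)-1$, $\depth(P)\neq\depth(M)-1$) that Lemma \ref{exact} requires for equality. The paper's colon-ideal argument, by contrast, is the template that generalizes to the computations for powers $I^t$ in Theorems \ref{small2} and beyond, where Betti splittings of $I^t$ are harder to control. One cosmetic caveat: your closing remark that $2\omega$ ``dominates'' $\reg(J\cap K)-1$ should be read as ``is at least'' rather than ``strictly exceeds,'' since in the pattern $\omega_2\ge 2$ one has $\reg(J\cap K)-1=2\omega$ exactly; your earlier formulation (each input is at most $2\omega$ and at least one equals $2\omega$) is the accurate one, and the conclusion $\reg(I)=2\omega$ is unaffected.
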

\begin{proof}  Let $I=I(P_{\omega}^n)$. The  case where $n=2$  is trivial.  If $n=3$, then we can suppose  $\omega_1\ge 2$ by symmetry. Thus the following equalities hold: $I:x_2=(x_1^{\omega_1}x_2^{\omega_1-1},x_3)$ and $(I,x_2)=(x_2)$. It follows from Lemma \ref{quotient} that
$\depth(S/(I:x_2))=1$ and $\depth(S/(I,x_2))=2$. Additionally, $\reg(S/(I:x_2))=2\omega_1-2$ and  $\reg(S/(I,x_2))=0$. Applying  Lemma \ref{exact} to the following short exact sequence
\begin{gather*}
\hspace{3cm}\begin{matrix}
 0 & \rightarrow & \frac{S}{I : x_{2}}(-1)  & \stackrel{ \cdot x_2} \longrightarrow  & \frac{S}{I} & \rightarrow & \frac{S}{(I,x_{2})} & \rightarrow & 0,& \hspace{3cm}(1)
 \end{matrix}
\end{gather*}
we can determine that $\depth (S/I)=1$ and $\reg (S/I)=2\omega_1-1$.

If $n=4$, then  $I=((x_1x_2)^{\omega_1},(x_2x_3)^{\omega_2},(x_3x_4)^{\omega_3})$ with $\omega_2=1$ or $\omega_1=\omega_3=1$.
There are two cases to consider:

(a) Suppose $\omega_2=1$. By symmetry, it can be assumed that $\omega_1\ge \omega_3$. Therefore, we have $(I : x_2)=(x_1^{\omega_1}x_2^{\omega_1-1},x_3)$ and  $(I,x_2)=(x_2,(x_3x_4)^{\omega_3})$. As a result, $\depth(S/(I : x_2))=\depth(S/(I,x_2))=2$,  along with $\reg(S/(I : x_2))=2\omega_1-2$ and $\reg(S/(I,x_2))=2\omega_3-1$, are presented.  Using
 Lemma \ref{exact} and  the exact sequence (1),
we can determine that $\depth(S/I)=2$ and $\reg (S/I)=2\omega_1-1$.

(b) If $\omega_1=\omega_3=1$, then $\omega_2\ge 2$. In this case, we have  $(I : x_2)=(x_1,x_2^{\omega_2-1}x_3^{\omega_2},x_3x_4)$ and  $(I,x_2)=(x_2,x_3x_4)$. Thus $\depth(S/(I,x_2))=2$ and $\reg(S/(I,x_2))=1$. If we let $I'=(I : x_2)$, then   $(I' : x_3)=(x_1,(x_2x_3)^{\omega_2-1},x_4)$ and $(I',x_3)=(x_1,x_3)$. Therefore,  $\depth(S/(I' : x_3))=1$, $\depth(S/(I',x_3))=2$, $\reg(S/(I' : x_3))=2\omega_2-3$ and  $\reg(S/(I',x_3))=0$.
By applying Lemma \ref{exact} to the following two short exact sequences
\begin{gather*}
\begin{matrix}
 0 & \rightarrow & \frac{S}{I' : x_3}(-1)  & \stackrel{ \cdot x_3} \longrightarrow  & \frac{S}{I'} & \rightarrow & \frac{S}{(I',x_3)} & \rightarrow & 0,\\
  0 & \rightarrow & \frac{S}{(I : x_2)}(-1) & \stackrel{ \cdot  x_2} \rightarrow & \frac{S}{I} &\rightarrow & \frac{S}{(I,x_2)} & \rightarrow & 0,
 \end{matrix}
\end{gather*}
we can determine that $\depth(S/I)=1$ and $\reg(S/I)=2\omega_2-1$.$\hfill$
\end{proof}

\begin{Theorem}
\label{path} Let $P_{\omega}^n$ be a path  as in Remark \ref{n-path}, where $n\ge 5$. By   symmetry  and Corollary \ref{integ},  we can assume that  $\omega_i\ge \omega_{i+2}$ and $\omega_i\ge 2$  for some $i\in [n-3]$. Then
\begin{align*}
&\reg(S/I(P_{\omega}^n))=\max \{2\omega_i+\lfloor \frac{i-1}{3} \rfloor+\lfloor \frac{n-(i+1)}{3} \rfloor, 2\omega_{i+2}+\lfloor \frac{i-2}{3} \rfloor+\lfloor \frac{n-i}{3} \rfloor\}-1,\\
&\depth(S/I(P_{\omega}^n))=\min \{\lceil\frac{i}{3}\rceil+\lceil\frac{n-i-a}{3}\rceil, \lceil \frac{i-2}{3} \rceil+\lceil \frac{n-i-2}{3} \rceil+1\},
\end{align*}		
where $a=\begin{cases}
            0, & \text{if $\omega_{i+2}=1$,}        \\
            1, & \text{otherwise.}
        \end{cases}$		
\end{Theorem}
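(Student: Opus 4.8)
The plan is to induct on $n$, with Theorem \ref{smalln} supplying the base cases $n\le 4$, and to remove the non-squarefree generators by polarization before doing any homological algebra. Recall that the integrally closed hypothesis (Lemma \ref{integral}, Corollary \ref{integ}) forces the two non-trivial edges, when both are present, to be exactly $e_i$ and $e_{i+2}$ with $\omega_{i+1}=1$; by symmetry I normalize so that $\omega_i\ge\omega_{i+2}$ and $\omega_i\ge 2$. Writing $I:=I(P_\omega^n)$, Lemma \ref{polar} reduces the task to computing $\reg(I^{\mathcal P})$ and $\pd(I^{\mathcal P})$ for the squarefree ideal $I^{\mathcal P}\subset S^{\mathcal P}$, after which $\reg(S/I)$ follows from Lemma \ref{quotient} and $\depth(S/I)=n-\pd(S/I^{\mathcal P})$ from Auslander--Buchsbaum (the relevant number of variables being the original $n$, not $\dim S^{\mathcal P}$).

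The engine is a single Betti splitting that peels off the heavier non-trivial edge. I would write $I^{\mathcal P}=(u)+K$, where $u=\mathcal P((x_ix_{i+1})^{\omega_i})=x_{i,1}\cdots x_{i,\omega_i}\,x_{i+1,1}\cdots x_{i+1,\omega_i}$ and $K$ is the polarization of the edge ideal of $P_\omega^n\setminus e_i$. Since $\omega_i\ge 2$, the top copy $x_{i,\omega_i}$ divides no generator except $u$, so $(u)$ is precisely the set of generators divisible by that variable; being principal it has a linear resolution, and Lemma \ref{spliting} makes $I^{\mathcal P}=(u)+K$ a Betti splitting. Corollary \ref{cor1} then reduces everything to the three ideals $(u)$, $K$, $(u)\cap K$, via $\reg(I^{\mathcal P})=\max\{\reg((u)),\reg(K),\reg((u)\cap K)-1\}$ and $\pd(I^{\mathcal P})=\max\{\pd((u)),\pd(K),\pd((u)\cap K)+1\}$.

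Next I compute the three pieces. The principal ideal contributes only $\reg((u))=2\omega_i$ and $\pd((u))=0$. The ideal $K$ lives, up to free variables, on the two disjoint vertex sets $\{x_1,\dots,x_i\}$ and $\{x_{i+1},\dots,x_n\}$: on the left it is the trivial path on $i$ vertices (Lemma \ref{trivialpath}), and on the right it is the shorter path on $n-i$ vertices whose only non-trivial edge is $e_{i+2}$, handled by the inductive hypothesis (or by Theorem \ref{smalln} when that path is short); Lemma \ref{sum2} adds the two, and this piece yields the $\omega_{i+2}$-term. Finally $(u)\cap K=u\cdot C$, where $C=(x_{i-1,1},x_{i+2,1})$ together with the two truncated trivial paths on $\{x_1,\dots,x_{i-2}\}$ and $\{x_{i+3},\dots,x_n\}$ (the generators $x_{i-1,1},x_{i+2,1}$ absorb the edges $e_{i-1},e_{i+1}$, and $x_{i+2,1}$ makes the polarized $e_{i+2}$ redundant). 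As $u$ is a nonzerodivisor in variables disjoint from $C$, multiplication by $u$ is an isomorphism onto $uC$, so $\pd((u)\cap K)=\pd(C)$ and $\reg((u)\cap K)=\reg(C)+\deg u=\reg(C)+2\omega_i$; evaluating $\reg(C),\pd(C)$ by Lemma \ref{sum2} and Lemma \ref{trivialpath} yields the $\omega_i$-term. Feeding these into Corollary \ref{cor1} gives $\reg(S/I)$ as the stated maximum, while the maximum in the projective-dimension formula becomes, through $\depth(S/I)=n-\pd(S/I^{\mathcal P})$, the stated minimum.

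The hard part is the bookkeeping that matches these outputs to the closed forms, and I anticipate three frictions. First, a battery of floor/ceiling identities (such as $\lfloor\tfrac{i+1}{3}\rfloor=\lfloor\tfrac{i-2}{3}\rfloor+1$ and $\lfloor\tfrac{n-i-3}{3}\rfloor=\lfloor\tfrac{n-i}{3}\rfloor-1$) is needed to recognize the inductive output as the two advertised terms. Second, the correction $a$ must be tracked with care: it records precisely whether the right path inside $K$ is trivial ($\omega_{i+2}=1$) or carries the edge $e_{i+2}$ ($\omega_{i+2}\ge 2$), which shifts a single ceiling in $\depth(S/K)$. Third, one must verify that the three-way maxima in Corollary \ref{cor1} are always governed by the two intended terms -- the $(u)$-branch being dominated because $2\omega_i\le\reg((u)\cap K)-1$ and because $\pd((u))=0$ yields the harmless depth value $n-1$ -- and here the hypothesis $\omega_i\ge\omega_{i+2}$ keeps the arithmetic consistent. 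I would stress that routing the depth through the \emph{exact} Betti numbers provided by the splitting, rather than through the inequality of Lemma \ref{exact}(2), is exactly what lets me avoid the delicate boundary case $\depth(P)=\depth(M)-1$ and obtain an exact value instead of a mere bound.
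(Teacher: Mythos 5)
Your proposal follows essentially the same route as the paper's own proof: polarize, split off the heavy edge $J=((x_ix_{i+1})^{\omega_i})$ as a Betti splitting via Lemma \ref{spliting} (using the top polarized variable, which requires $\omega_i\ge 2$ and $\omega_{i-1}=1$), compute the three pieces $J$, $K$, and $J\cap K=J\cdot L$ with $L=(x_{i-1},x_{i+2})$ plus two trivial sub-paths, and assemble via Corollary \ref{cor1}, Lemmas \ref{quotient}, \ref{polar}, \ref{sum2}, \ref{trivialpath}, handling the right-hand sub-path containing $e_{i+2}$ recursively. The only cosmetic difference is that you package the recursion as induction on $n$, whereas the paper first settles the case $\omega_{i+2}=1$ outright and then feeds that formula back in for $K'$; the substance, including the exact-$\pd$ route to depth that avoids Lemma \ref{exact}, is the same.
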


\begin{proof}
Let $I=I(P_{\omega}^n)$ and $I^{\calP}$ be its polarization, then $I^{\calP}=J^{\calP}+K^{\calP}$, which is 
Betti splitting by Lemma \ref{spliting}, where $J=(x_ix_{i+1})^{\omega_i}$ and  $\mathcal{G}(K)=\mathcal{G}(I)\setminus \mathcal{G}(J)$, since $\omega_i\ge 2$ and $\omega_{i-1}=1$. Note that $(J\cap K)^{\calP}=J^{\calP}\cap K^{\calP}$. It follows from Corollary \ref{cor1} and Lemmas
\ref{quotient}  and \ref{polar} that
\begin{eqnarray*}
\reg(S/I)\!\!&=&\!\!\reg(I)-1=\reg(I^{\calP})-1\\
\!\!&=&\!\!\max\,\{\reg(J^{\calP}),\reg(K^{\calP}),\reg(J^{\calP}\cap K^{\calP})-1\}-1\\
\!\!&=&\!\!\max\,\{\reg(J),\reg(K),\reg(J\cap K)-1\}-1 \hspace{4cm} (2)
  \end{eqnarray*}
and
\begin{eqnarray*}
\pd(S/I)\!\!&=&\!\!\pd(I)+1=\pd(I^{\calP})+1\\
\!\!&=&\!\!\max\,\{\pd(J^{\calP}),\pd(K^{\calP}),\pd(J^{\calP}\cap K^{\calP})+1\}+1\\
\!\!&=&\!\!\max\,\{\pd(J),\pd(K),\pd(J\cap K)+1\}+1\\
\!\!&=&\!\!\max\,\{\pd(S/J),\pd(S/K),\pd(S/(J\cap K))+1\}.
  \end{eqnarray*}
Therefore, 
\begin{eqnarray*}
	\depth (S/I)\!\!&=&\!\!n-\pd(S/I)=n-\max\,\{\pd(S/J),\pd(S/K),\pd(S/(J\cap K))+1\}\\
	\!\!&=&\!\!\min\,\{\depth(S/J),\depth(S/K),\depth(S/(J\cap K))-1\}. \hspace{1.5cm} (3)
\end{eqnarray*}
Now, we calculate the depth of $S/I$.

Note that $K=(x_1x_2,\ldots,x_{i-1}x_i)+K'$ where $K'=(x_{i+1}x_{i+2},
 x_{i+2}^{\omega_{i+2}}x_{i+3}^{\omega_{i+2}},\ldots,x_{n-1}x_n)$, 
and $J \cap K=JL$ with $L=I(P_\omega^n \setminus \{x_{i-1},x_i,x_{i+1},x_{i+2}\})+(x_{i-1},x_{i+2})$, where $x_i=0$ if $i \le 0$.
We have the following two cases:

(a) If $\omega_{i+2}=1$, then $\depth (S/J)=n-1$, $\depth (S/K)=\lceil \frac{i}{3} \rceil+\lceil \frac{n-i}{3} \rceil$  and $\depth(S/J\cap K)=\lceil \frac{i-2}{3} \rceil+\lceil \frac{n-i-2}{3} \rceil+2$ by Lemma \ref{trivialpath} and Lemma \ref{sum2}.
It follows from eq. (3) that
\[
\hspace{0.5cm}\depth(S/I)=\min \{\lceil \frac{i}{3} \rceil+\lceil \frac{n-i}{3} \rceil,\lceil \frac{i-2}{3} \rceil+\lceil \frac{n-i-2}{3} \rceil+1\}. \hspace{2.7cm} (4)
\]

(b) If $\omega_{i+2}>1$, then $\depth (S/J)=n-1$, $\depth(S/J\cap K)=\lceil \frac{i-2}{3} \rceil+\lceil \frac{n-i-2}{3} \rceil+2$ by Lemma \ref{trivialpath} and Lemma \ref{sum2}.  By substituting $K'$ for $I$ in equation (4), we can see that $\depth (S/K)=\lceil \frac{i}{3} \rceil+\lceil \frac{n-i-1}{3} \rceil$. Thus
\[
\hspace{0.5cm}\depth(S/I)=\min \{\lceil \frac{i}{3} \rceil+\lceil \frac{n-i-1}{3} \rceil,\lceil \frac{i-2}{3} \rceil+\lceil \frac{n-i-2}{3} \rceil+1\}
\]
by eq. (3).

Next, we calculate the regularity of $S/I$. There are two cases to consider:

 (a) If $\omega_{i+2}=1$, then $\reg(S/J)=2\omega_i-1$, $\reg(S/K)=\lfloor \frac{i+1}{3} \rfloor+\lfloor \frac{n-i+1}{3} \rfloor$ and $\reg(S/J \cap K)=2\omega_i+\lfloor \frac{i-1}{3} \rfloor+\lfloor \frac{n-i-1}{3} \rfloor$ by applying Lemma \ref{trivialpath} and Lemma \ref{sum2}.  From eq. (2), it  follows that
\[
\hspace{3cm}\reg(S/I)=2\omega_i+\lfloor \frac{i-1}{3} \rfloor+\lfloor \frac{n-(i+1)}{3} \rfloor-1. \hspace{3cm}(5)
\]

(b) If $\omega_{i+2}>1$, then $\reg(S/J)=2\omega_i-1$, $\reg(S/J \cap K)=2\omega_i+\lfloor \frac{i-1}{3} \rfloor+\lfloor \frac{n-i-1}{3} \rfloor$ by applying Lemma \ref{trivialpath} and Lemma \ref{sum2}, and $\reg(S/K)=2\omega_{i+2}+\lfloor \frac{i+1}{3} \rfloor+\lfloor \frac{n-i}{3} \rfloor-2$ by substituting $K'$ for $I$ in equation (5). Therefore,
\[
\hspace{0.5cm}\reg(S/I(P_{\omega}^n))=\max \{2\omega_i+\lfloor \frac{i-1}{3} \rfloor+\lfloor \frac{n-(i+1)}{3} \rfloor, 2\omega_{i+2}+\lfloor \frac{i-2}{3} \rfloor+\lfloor \frac{n-i}{3} \rfloor\}-1
\]
by eq. (2).$\hfill$
\end{proof}

We will now  analyze the powers of the edge ideal of an edge-weighted  path $P_{\omega}^n$.
\begin{Lemma}\label{pathcolon}
Let $P^n_\omega$ be an edge-weighted path with the vertex set   $\{x_1,\ldots,x_n\}$ and the edge set  $\{e_1,\ldots,e_{n-1}\}$, where $e_i=\{x_i,x_{i+1}\}$ and $\omega_i=\omega(e_i)$ for all $i \in [n-1]$. If $\omega_{n-1}=1$, then for all $t\ge 2$,  we have
\begin{itemize}
\item[(1)] $(I(P^n_\omega)^t: x_{n-1}x_{n})=I(P^n_\omega)^{t-1}$;
 \item[(2)] $((I(P^n_\omega)^t: x_n),x_{n-1})=(I(P^n_\omega \setminus {x_{n-1}})^t,x_{n-1})$;
 \item[(3)] $(I(P^n_\omega)^t,x_n)=(I(P^n_\omega\setminus {x_n})^t,x_n)$;
 \item[(4)] $(I(P^n_\omega)^t,x_{n-1})=(I(P^n_\omega\setminus {x_{n-1}})^t,x_{n-1})$;
\item[(5)] $((I(P^n_\omega)^t : x_{n-1}),x_n)=((I(P^n_\omega\setminus {x_n})^t : x_{n-1}),x_n)$.
 \end{itemize}
  \end{Lemma}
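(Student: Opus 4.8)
The plan is to exploit the \emph{leaf structure} at $x_n$. Since $\omega_{n-1}=1$, the monomial $x_{n-1}x_n$ is a minimal generator of $I:=I(P^n_\omega)$, and it is the unique generator divisible by $x_n$; writing $A:=I(P^n_\omega\setminus x_n)=I(P^{n-1}_\omega)$ for the edge ideal of the path on $x_1,\dots,x_{n-1}$, we have the decomposition $I=A+(x_{n-1}x_n)$. Throughout I will use the standard fact that a monomial $m$ lies in $I^t$ if and only if $m$ is divisible by a product $u_1\cdots u_t$ of $t$ (not necessarily distinct) generators from $\mathcal{G}(I)$, and I will keep track of which of these factors equal the leaf edge $x_{n-1}x_n$.

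Parts (3) and (4) are the quickest. For any monomial ideal and any variable $x$, adjoining $x$ absorbs exactly the generators divisible by $x$. Applying this to $I^t$ with $x=x_n$ (resp. $x=x_{n-1}$), a generator of $I^t$ not divisible by $x_n$ (resp. $x_{n-1}$) is a product of $t$ generators none of which is incident to $x_n$ (resp. $x_{n-1}$), hence a product of edges of the induced subgraph $P^n_\omega\setminus x_n$ (resp. $P^n_\omega\setminus x_{n-1}$). This yields (3) and (4) at once.

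For (2) and (5) I combine a colon with the same absorption. Let $u$ be a generator of the inner colon ideal that is \emph{not} divisible by the variable to be adjoined, i.e. $ux_n\in I^t$ with $x_{n-1}\nmid u$ in case (2), and $ux_{n-1}\in I^t$ with $x_n\nmid u$ in case (5). Fix a product of $t$ generators dividing $ux_n$ (resp. $ux_{n-1}$). A short divisibility check rules out the leaf edge $x_{n-1}x_n$ as a factor --- it would force the adjoined variable to divide $u$, contrary to the choice of $u$ --- and in case (2) it likewise rules out the generator $(x_{n-2}x_{n-1})^{\omega_{n-2}}$. Hence in case (2) every factor is an edge of $P^n_\omega\setminus x_{n-1}$, so $u\in I(P^n_\omega\setminus x_{n-1})^t$, while in case (5) every factor lies in $A$, so $u\in(A^t:x_{n-1})$. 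The reverse containments are immediate from $A^t\subseteq I^t$, and adjoining the extra variable absorbs the remaining generators, giving the two stated equalities.

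The main content, and the only place where the hypothesis $\omega_{n-1}=1$ is essential, is part (1), where I prove $(I^t:x_{n-1}x_n)=I^{t-1}$. The inclusion $\supseteq$ is clear since $x_{n-1}x_n\in I$. For $\subseteq$, let $u$ be a monomial with $ux_{n-1}x_n\in I^t$ and fix a product $u_1\cdots u_t$ of generators dividing $ux_{n-1}x_n$. If some $u_j=x_{n-1}x_n$, cancelling it shows the remaining $t-1$ factors divide $u$, so $u\in I^{t-1}$. Otherwise no factor is divisible by $x_n$, so $u_1\cdots u_t$ already divides $ux_{n-1}$; if in addition no factor is divisible by $x_{n-1}$ then it divides $u$ and $u\in I^t\subseteq I^{t-1}$, whereas if some factor equals $(x_{n-2}x_{n-1})^{\omega_{n-2}}$ --- the only $A$-generator meeting $x_{n-1}$ --- I cancel one $x_{n-1}$ from it (which remains a monomial because $\omega_{n-2}\ge 1$) to exhibit a product of $t-1$ generators dividing $u$, again placing $u$ in $I^{t-1}$. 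The delicate point, and the expected obstacle, is exactly this bookkeeping: one must ensure that after cancelling a single variable the leftover is still a genuine product of $t-1$ generators dividing $u$, and it is the hypothesis $\omega_{n-1}=1$ that makes $x_{n-1}x_n$ an honest generator, so that the leaf-edge cancellation in the first case is available at all.
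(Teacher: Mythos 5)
Your proof is correct and takes essentially the same approach as the paper's: a factor-by-factor divisibility analysis of products of $t$ edge generators, exploiting that $x_{n-1}x_n$ is the only generator divisible by $x_n$, and cancelling either that leaf edge or a single $x_{n-1}$ from a factor $(x_{n-2}x_{n-1})^{\omega_{n-2}}$. If anything, your sub-case analysis in part (1) spells out explicitly a step the paper compresses into the terse justification ``since $\omega_{n-1}=1$.''
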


\begin{proof}Let $I=I(P^n_\omega)$.

(1) For any monomial $u \in \mathcal{G}(I^t: x_{n-1}x_{n})$, it follows that  $ux_{n-1}x_n \in I^t$. If
\[
\hspace{3cm}ux_{n-1}x_n=u_{i1} \cdots u_{it}h \hspace{5.7cm}(6)
\]
  where each $u_{ij} \in \mathcal{G}(I)$  and $h$ is a  monomial,
and $x_n| u_{ij}$ for some $j\in [t]$, then $u_{ij}=x_{n-1}x_n$, since $N_G(x_n)=\{x_{n-1}\}$. Therefore,
$u\in I^{t-1}$ by eq.  (6). If $x_n\nmid u_{ij}$  for any $j\in [t]$, then $x_n|h$ according to   eq.  (6). This implies that
$u\in I^{t-1}$, since $\omega_{n-1}=1$.

(2) It is clear that $(I(P^n_\omega \setminus {x_{n-1}})^t,x_{n-1})\subseteq ((I^t: x_n),x_{n-1})$.  For any monomial $u \in \mathcal{G}(((I^t: x_n),x_{n-1}))$. If
 $x_{n-1}|u$, then  $u \in (I(P^n_\omega \setminus {x_{n-1}})^t,x_{n-1})$. Otherwise,   $ux_n \in I^t$. We can write $ux_n$ as 
\[
ux_n=u_{\ell 1} \cdots u_{\ell t}v
\]
  where each $u_{\ell j} \in \mathcal{G}(I(P^n_\omega\setminus {x_{n-1}}))$  and $v$ is a  monomial. It follows that $x_n|v$ since  $x_{n-1}
  \nmid u$. Therefore,  $u \in I(P^n_\omega \setminus {x_{n-1}})^t$.

(3) For any monomial $u \in \mathcal{G}(I^t) \setminus \mathcal{G}(I(P^n_\omega\setminus {x_n})^t)$, it follows that $x_n|u$. So $u \in (x_n)$.

(4)  If $u$ is a monomial in $\mathcal{G}(I^t) \setminus \mathcal{G}(I(P^n_\omega\setminus {x_{n-1}})^t)$, then $x_{n-1}|u$, indicating that $u \in (x_{n-1})$.

(5)  For any monomial $u \in \mathcal{G}((I^t : x_{n-1}),x_n)$, if
 $x_{n}|u$, then  $u \in ((I(P^n_\omega \setminus {x_{n}})^t : x_{n-1}),x_{n})$. Otherwise, we have $ux_{n-1} \in I^t$. Let $ux_{n-1}=u_{i1} \ldots u_{it}g$, where each
 $u_{ij} \in \mathcal{G}(I)\setminus \{x_{n-1}x_n\}$ and $g$ is a monomial. This implies that each $u_{ij} \in \mathcal{G}(I(P^n_\omega\setminus {x_{n}}))$. Hence $u \in I(P^n_\omega \setminus {x_{n}})^t : x_{n-1}$.
 $\hfill$
\end{proof}

\begin{Lemma}\label{pathcolon2}
 Let $P_{\omega}^4$ be a path as in Remark \ref{n-path}. 
Suppose $\omega_1 \ge \omega_3 \ge 2$ and $\omega_2=1$. Then, for any $t\ge 2$, we have
\begin{itemize}
\item[(1)] $(I(P^4_\omega)^t:(x_2x_3)^{t-1})=I(P^4_\omega)$;
\item[(2)] $((I(P^4_\omega)^t:(x_2x_3)^\ell),x_2x_3)=((x_1x_2)^{(t-\ell)\omega_1},x_2x_3,(x_3x_4)^{(t-\ell)\omega_3})$ for  $\ell\in [t-2]$;
\item[(3)] $(I(P^4_\omega)^t,x_2x_3)=((x_1x_2)^{t\omega_1},x_2x_3,(x_3x_4)^{t\omega_3})$.
 \end{itemize}
\end{Lemma}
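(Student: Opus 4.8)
The plan is to deduce all three identities from a single \emph{colon identity} together with one reduction modulo $x_2x_3$. Write $I=I(P_\omega^4)$; since $\omega_2=1$, its minimal generators are $g_1=(x_1x_2)^{\omega_1}$, $g_2=x_2x_3$ and $g_3=(x_3x_4)^{\omega_3}$, and a monomial lies in $I^s$ if and only if it is divisible by $g_1^pg_2^qg_3^r$ for some $p+q+r=s$. The key assertion I would isolate is the single-step colon identity
\[
(I^s:x_2x_3)=I^{s-1}\qquad (s\ge 2).
\]
From this, $(I^t:(x_2x_3)^{\ell})=I^{t-\ell}$ follows for $1\le \ell\le t-1$ by the elementary relation $(J:fg)=((J:f):g)$ and induction on $\ell$, each application staying in range because the exponent $t-\ell+1\ge 2$. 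Part (1) is then precisely the case $\ell=t-1$, giving $I^{t-(t-1)}=I$.

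For (3) I would compute $(I^t,x_2x_3)$ by working modulo $x_2x_3$. Among the generating products $g_1^pg_2^qg_3^r$ of $I^t$, any with $q\ge 1$ is divisible by $g_2=x_2x_3$; and if $q=0$ with $p\ge 1$ and $r\ge 1$, then $x_2\mid g_1^p$ and $x_3\mid g_3^r$, so $x_2x_3$ again divides the product. The only survivors are $g_1^t=(x_1x_2)^{t\omega_1}$ (when $q=r=0$) and $g_3^t=(x_3x_4)^{t\omega_3}$ (when $q=p=0$), which gives (3). Part (2) is then purely formal: by the colon identity,
\[
\bigl((I^t:(x_2x_3)^{\ell}),x_2x_3\bigr)=(I^{t-\ell},x_2x_3),
\]
and since $\ell\le t-2$ forces $t-\ell\ge 2$, applying (3) with $t$ replaced by $t-\ell$ yields exactly $((x_1x_2)^{(t-\ell)\omega_1},x_2x_3,(x_3x_4)^{(t-\ell)\omega_3})$.

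It remains to prove the colon identity, which I expect to be the main obstacle. The inclusion $I^{s-1}\subseteq(I^s:x_2x_3)$ is immediate since $g_2=x_2x_3\in I$. For the reverse, I would take a monomial $u$ with $ux_2x_3\in I^s$ and fix a factorization $ux_2x_3=g_1^pg_2^qg_3^rh$ with $p+q+r=s$ and $h$ a monomial. If $q\ge 1$, cancelling one copy of $g_2=x_2x_3$ gives $u=g_1^pg_2^{q-1}g_3^rh\in I^{s-1}$. The cases $q=0$ with $p=0$ or with $r=0$ are equally direct: dividing $x_2x_3$ out of $g_3^s h$ (resp.\ $g_1^s h$) lowers a single exponent by one, and $g_3^{s-1}$ (resp.\ $g_1^{s-1}$) still divides $u$.

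The delicate case, and the one genuinely using the hypothesis $\omega_1\ge\omega_3\ge 2$, is $q=0$ with $p\ge 1$ and $r\ge 1$: here the covering product $g_1^pg_3^r$ touches both ends of the path while contributing nothing to the middle edge. Because $\omega_1,\omega_3\ge 2$, comparing $x_2$- and $x_3$-exponents gives $\deg_{x_2}(u)\ge p\omega_1-1\ge \omega_1-1\ge 1$ and $\deg_{x_3}(u)\ge r\omega_3-1\ge \omega_3-1\ge 1$, so $g_2=x_2x_3$ divides $u$. Writing $u=g_2w$, the same inequalities (again invoking $\omega_1,\omega_3\ge 2$) show that $g_1^{p-1}g_3^{r-1}\mid w$, so $w\in I^{s-2}$ and hence $u=g_2w\in I^{s-1}$. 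This peeling-off step is the only place requiring real care; it is exactly where integral closure (which forces $\omega_1,\omega_3\ge 2$ and $\omega_2=1$ in the present configuration) enters, and once it is settled the remaining deductions are formal.
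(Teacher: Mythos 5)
Your proposal is correct, but it takes a genuinely different route from the paper's. The paper proves (1) and (2) head-on: for (1) it fixes a factorization $u(x_2x_3)^{j-1}=(x_1x_2)^{k\omega_1}(x_3x_4)^{(j-k)\omega_3}h$ and runs a six-case analysis comparing $j$ with $2k$ and $2k\pm1$ (including parity distinctions), and for (2) it splits into the cases $i\le k$ and $i>k$ and verifies $(x_1x_2)^{(t-\ell)\omega_1}\mid u$ or $(x_3x_4)^{(t-\ell)\omega_3}\mid u$ through chains of inequalities. You instead isolate the single-step colon identity $(I^s:x_2x_3)=I^{s-1}$ for $s\ge 2$, whose only nontrivial case is the factorization $g_1^pg_3^rh$ with $p,r\ge1$ and no $g_2$-factor; your peeling-off step there is sound, since the bounds $\deg_{x_2}(u)\ge p\omega_1-1$, $\deg_{x_3}(u)\ge r\omega_3-1$ give $x_2x_3\mid u$, and after writing $u=x_2x_3w$ the inequalities $p\omega_1-2\ge(p-1)\omega_1$ and $r\omega_3-2\ge(r-1)\omega_3$ (exactly where $\omega_1,\omega_3\ge2$ enters) yield $g_1^{p-1}g_3^{r-1}\mid w$, hence $u\in I^{s-1}$. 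The rest is formal: iterating the identity gives the stronger statement $(I^t:(x_2x_3)^\ell)=I^{t-\ell}$ for all $\ell\le t-1$, of which part (1) is the case $\ell=t-1$, and parts (2) and (3) follow by reduction modulo $x_2x_3$ (your argument for (3) coincides with the paper's). Your route buys a sharper intermediate result --- the exact value of every intermediate colon ideal, not merely its sum with $(x_2x_3)$ --- and it localizes the use of the weight hypothesis to a single step, making visible that only $\omega_1,\omega_3\ge2$, and not the ordering $\omega_1\ge\omega_3$, is needed; the paper's direct computation proves no more and is considerably longer, though it avoids formulating and iterating the auxiliary identity.
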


\begin{proof}Let $I=I(P^4_\omega)$.

(1) It is evident that $I\subseteq (I^t \colon (x_2x_3)^{t-1})$.  For any monomial $u$  in $\mathcal{G}(I^t \colon (x_2x_3)^{t-1})$,
if $x_2x_3|u$, then  $u\in I$. Otherwise, we write  $u(x_2x_3)^{t-1}$ as $u(x_2x_3)^{t-1}=(x_2x_3)^{t-j}h\prod_{\ell=1}^{j}u_{i\ell}$ for some $j\in [t]$, where each
$u_{i\ell} \in \mathcal{G}(I)$,  $x_2x_3\nmid h$ and  $x_2x_3\nmid u_{i\ell}$. Thus
$u(x_2x_3)^{j-1}=h\prod_{\ell=1}^{j}u_{i\ell}$. Suppose that $u_{i\ell}=(x_1x_2)^{\omega_1}$ for any $\ell\in [k]$ and  $u_{i\ell}=(x_3x_4)^{\omega_3}$ for $k+1\le \ell\le j$, then
\[
\hspace{3cm}u(x_2x_3)^{j-1}=(x_1x_2)^{k\omega_1}(x_3x_4)^{(j-k)\omega_3}h \hspace{3.7cm}(7)
\]
We can distinguish six cases as follows:

(i) If $j=2k+1$, then $u(x_2x_3)^{2k}=(x_1x_2)^{k\omega_1}(x_3x_4)^{(k+1)\omega_3}h$ by eq. (7).  As $\omega_3\ge 2$, $\omega_3(k+1)\ge \omega_3+2k$.
  Therefore, $(x_3x_4)^{\omega_3}|u$.

 (ii) If $j>2k+1$ and is odd, then $2(j-k-1)>j-1$. Hence $\omega_3(j-k)>(j-1)+\omega_3$, since $\omega_3\ge 2$.
 By comparing the powers of $x_3$ in  eq.  (7), we can deduce that $(x_3x_4)^{\omega_3}|u$.

 (iii)  If $j<2k+1$ and is odd, then $j-1\le 2(k-1)$.  Thus, we have  $k\omega_1\ge 2(k-1)+\omega_1\ge  (j-1)+\omega_1$. Therefore, we can conclude that  $(x_1x_2)^{\omega_1}|u$ by comparing powers of $x_2$ in eq.  (7).

 (iv) If $j=2k$, then $\omega_1k, \omega_3(j-k) \ge 2k$. Hence, $x_2x_3|u$ by eq. (7).

 (v) If $j>2k$ and is even, then $2(j-k-1)\ge j-1$. It follows that $\omega_3(j-k) \ge 2(j-k-1)+\omega_3 \ge (j-1)+\omega_3$.  As a result,  $(x_3x_4)^{\omega_3}|u$ according to  eq.  (7).

 (vi) If $j<2k$ and  is even,  then $k \omega_1=(k-1)\omega_1+\omega_1 \ge 2(k-1)+\omega_1 \ge (j-1)+\omega_1$. Hence, $(x_1x_2)^{\omega_1}|u$ by using eq.   (7).

In any case, we always have  $u \in I$.

(2) Let $u \in \mathcal{G}(I^t:(x_2x_3)^\ell)$, and $x_2x_3|u$, then  $u \in ((x_1x_2)^{(t-\ell)\omega_1},
x_2x_3,(x_3x_4)^{(t-\ell)\omega_3})$. Otherwise, we write $u(x_2x_3)^\ell$ as 
$u(x_2x_3)^\ell=(x_1x_2)^{i\omega_1}(x_2x_3)^j(x_3x_4)^{k\omega_3}h$, where $h$ is a monomial such that $x_2x_3\nmid h$ and $i+j+k=t$ with $j \le \ell$,
then
\[
\hspace{3.0cm}u(x_2x_3)^{\ell-j}=(x_1x_2)^{i\omega_1}(x_3x_4)^{k\omega_3}h.\hspace{3.9cm}(8)
\]
We distinguish between the following two cases:

(i) When $i\le k$,  we can rewrite eq.  (8) as
\[
\hspace{3.0cm} u(x_2x_3)^{\ell-j}=(x_2x_3)^{i\omega_3}(x_3x_4)^{(k-i)\omega_3}h'\hspace{3.7cm}(9)
\]
in which $h'=x_1^{i\omega_1}x_2^{i(\omega_1-\omega_3)}x_4^{i\omega_3}h$,   based on the power of $x_3$.
Since $x_2x_3\nmid h$, we have $\ell-j \ge i\omega_3$ by comparing  powers of $x_2x_3$ in equation  (9). Therefore,  $u(x_2x_3)^{\ell-(j+i\omega_3)}=(x_3x_4)^{(k-i)\omega_3}h'$. Note that $\omega_1 \ge \omega_3\ge 2$ and  $i+j+k=t$ with $\ell-j \ge i\omega_3$, we have
\begin{eqnarray*}
& &(k-i)\omega_3+i\omega_3-(\ell-j)=k\omega_3+j-\ell=(t-i-j)\omega_3+j-\ell+\ell\omega_3-\ell\omega_3\\
&=&(t-\ell)\omega_3-i\omega_3+(\ell-j)(\omega_3-1)\ge (t-\ell)\omega_3-i\omega_3+(\ell-j)\\
& \ge &(t-\ell)\omega_3-i\omega_3+i\omega_3=(t-\ell)\omega_3
\end{eqnarray*}
and
\[
k-i=k+\ell-i-(i+j+k)+(t-\ell)=(\ell-j-2i)+(t-\ell)\ge (i\omega_3-2i)+(t-\ell) \ge (t-\ell).
\]
 By comparing the powers of $x_3x_4$ in  eq. (9), we obtain  $(x_3x_4)^{(t-\ell)\omega_3}|u$, which implies
 $u \in ((x_1x_2)^{(t-\ell)\omega_1},x_2x_3,(x_3x_4)^{(t-\ell)\omega_3})$.

\item[(ii)] When $i>k$,  equation  (8) can be rewritten as
\[
\hspace{3.0cm}u(x_2x_3)^{\ell-j}=(x_2x_3)^{k\omega_3}(x_1x_2)^{(i-k)\omega_1}v\hspace{3.7cm}(10)
 \]
with $v=x_1^{k\omega_1}x_2^{k(\omega_1-\omega_3)}x_4^{k\omega_3}h$.
Since $(x_2x_3)\nmid u$, one has $\ell-j \ge k\omega_3$ by comparing  powers of $x_2x_3$ in eq. (10).
Thus,  $u(x_2x_3)^{\ell-j-\omega_3k}=(x_1x_2)^{(i-k)\omega_1}v$. Note that $\omega_1 \ge \omega_3\ge 2$ and  $i+j+k=t$ with $\ell-j \ge k\omega_3$, we have
\begin{eqnarray*}
& &(i-k)\omega_1+k\omega_3-(\ell-j)=(t-j-k)\omega_1+k(\omega_3-\omega_1)+\ell\omega_1-\ell\omega_1-(\ell-j)\\
&=&(t-\ell)\omega_1+k(\omega_3-2\omega_1)+(\ell-j)(\omega_1-1)\\
& \ge &(t-\ell)\omega_1+k(\omega_3-2\omega_1)+k\omega_3(\omega_1-1)=(t-\ell)\omega_1+k\omega_1(\omega_3-2)\\& \ge & (t-\ell)\omega_1
\end{eqnarray*}
and
\[
i-k=(t-j-k)-k+\ell-\ell=(t-\ell)-2k+(\ell-j) \ge (t-\ell)-2k+k\omega_3 \ge (t-\ell).
\]
 By comparing powers of $x_1x_2$ in eq.  (10), we obtain $(x_1x_2)^{(t-\ell)\omega_1}|u$, which implies  that 
 $u \in ((x_1x_2)^{(t-\ell)\omega_1},x_2x_3,(x_3x_4)^{(t-\ell)\omega_3})$.

 (3) Let $u \in \mathcal{G}(I^t)$, and $x_2x_3|u$, then   $u \in ((x_1x_2)^{t\omega_1},x_2x_3,(x_3x_4)^{t\omega_3})$. Otherwise, let
$u=(x_1x_2)^{i\omega_1}(x_3x_4)^{j\omega_3}$, where $i+j=t$ and $j \le \ell$.  If $i,j\neq 0$, then  $x_2x_3|u$. If $i=0$, then $j=t$, which means that
$(x_3x_4)^{t\omega_3}|u$. If $j=0$, then $i=t$, which results in
$(x_1x_2)^{t\omega_3}|u$. In any case, we always have $u \in ((x_1x_2)^{t\omega_1},x_2x_3,(x_3x_4)^{t\omega_3})$.$\hfill$
 \end{proof}

Using the previous preparatory knowledge, we can calculate the regularity and depth of the edge ideal of powers of a path $P_{\omega}^n$, starting with  $n\le 4$.
\begin{Theorem} \label{small2}
Let $P_{\omega}^n$ be a path as in Remark \ref{n-path}, where $n\le 4$.  Let $\omega=\max\{\omega_1,\ldots,\omega_{n-1}\}$. 
Then, for all $t\ge 1$, we have the following results:

\begin{itemize}
\item[(1)] If $n=2$, then $\reg(S/I(P_{\omega}^n)^t) = 2t\omega-1$ and  $\depth(S/I(P_{\omega}^n)^t) = 1$.
\item[(2)] If $n=3$, then $\reg(S/I(P_{\omega}^n)^t) = 2t\omega-1$ and   $\depth(S/I(P_{\omega}^n)^t) = 1$.
\item[(3)] If $n=4$, then $\reg(S/I(P_{\omega}^n)^t) = 2t\omega-1$ and \\
$\depth(S/I(P_{\omega}^n)^t)=\begin{cases}
            1, & \text{if $\omega_1=\omega_3=1$ and $\omega_2 > 1$,}        \\
            2, & \text{if $\omega_1,\omega_3 > 1$ and $\omega_2=1$.}
        \end{cases}$
\end{itemize}
\end{Theorem}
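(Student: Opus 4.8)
The plan is to induct on the exponent $t$, taking the base case $t=1$ from Theorem \ref{smalln} and feeding the inductive hypothesis through short exact sequences built from the colon and quotient ideals computed in Lemmas \ref{pathcolon} and \ref{pathcolon2}, converting each sequence into regularity/depth information via Lemma \ref{exact}. The case $n=2$ is immediate: $I(P_\omega^n)=((x_1x_2)^\omega)$, so $I(P_\omega^n)^t=((x_1x_2)^{t\omega})$ is principal and generated in degree $2t\omega$, whence $\reg(S/I(P_\omega^n)^t)=2t\omega-1$ by Lemma \ref{quotient} and $\depth(S/I(P_\omega^n)^t)=n-1=1$ since $\pd=1$.

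For $n=3$ and for the subcase $n=4$ with $\omega_1=\omega_3=1<\omega_2$, the hypothesis $\omega_{n-1}=1$ holds (for $n=3$ after using symmetry to place the trivial edge last), so Lemma \ref{pathcolon} applies. Write $I=I(P_\omega^n)$ and use the outer sequence $0\to (S/(I^t:x_n))(-1)\xrightarrow{\;\cdot x_n\;} S/I^t\to S/(I^t,x_n)\to 0$. By Lemma \ref{pathcolon}(3) its right-hand term is $S/(I(P_\omega^n\setminus x_n)^t,x_n)$, a shorter path tensored with $S/(x_n)$, whose invariants follow from the $n\le 3$ cases and Lemma \ref{sum2}, giving $\reg=2t\omega-1$ and $\depth=1$. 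The left-hand term is resolved by a second sequence obtained by coloning with $x_{n-1}$: Lemma \ref{pathcolon}(1) gives $(I^t:x_n):x_{n-1}=I^{t-1}$, importing the inductive hypothesis ($\reg=2(t-1)\omega-1$, $\depth=1$), while Lemma \ref{pathcolon}(2) identifies the remaining quotient with an even shorter path ($\reg$ equal to $0$ or $2t-1$, $\depth=2$). Two uses of Lemma \ref{exact} yield $\reg(S/(I^t:x_n))=2(t-1)\omega$ and $\depth(S/(I^t:x_n))=1$; plugging back, the shifted regularity $2(t-1)\omega+1$ sits strictly below $2t\omega-1$ (the gap is $2\omega-2\ge 2$) and the quotient's depth never equals the subideal's depth minus one, so Lemma \ref{exact} gives $\reg(S/I^t)=2t\omega-1$ and $\depth(S/I^t)=1$.

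The genuinely harder subcase is $n=4$ with $\omega_2=1<\omega_3\le\omega_1$ (so $\omega=\omega_1$), where $\omega_{n-1}=\omega_3\ge 2$ and Lemma \ref{pathcolon} is unavailable. Here I would abandon induction on $t$ in favour of the increasing chain
\[
I^t \subseteq (I^t:x_2x_3) \subseteq (I^t:(x_2x_3)^2) \subseteq \cdots \subseteq (I^t:(x_2x_3)^{t-1}) = I,
\]
whose top equality is Lemma \ref{pathcolon2}(1). For $0\le \ell\le t-2$ the colon-by-$x_2x_3$ sequence
\[
0 \to \frac{S}{I^t:(x_2x_3)^{\ell+1}}(-2) \xrightarrow{\;\cdot x_2x_3\;} \frac{S}{I^t:(x_2x_3)^{\ell}} \to \frac{S}{(I^t:(x_2x_3)^{\ell},\,x_2x_3)} \to 0
\]
has, by Lemma \ref{pathcolon2}(2) and (3), right-hand term $S/((x_1x_2)^{a\omega_1},x_2x_3,(x_3x_4)^{a\omega_3})$ with $a=t-\ell$. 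The key observation is that this quotient is exactly the edge ideal of the weighted path on $x_1,\dots,x_4$ with weight sequence $(a\omega_1,1,a\omega_3)$; since its middle weight is $1$ it is integrally closed, so Theorem \ref{smalln}(3) gives $\reg=2a\omega_1-1$ and $\depth=2$. I would then propagate along the chain by downward induction on $\ell$: starting from $\reg(S/I)=2\omega_1-1$ and $\depth(S/I)=2$, Lemma \ref{exact} together with the gap $2a\omega_1-1>2(a-1)\omega_1+1$ (equivalently $\omega_1>1$) forces $\reg(S/(I^t:(x_2x_3)^{\ell}))=2(t-\ell)\omega_1-1$, and since every depth in sight equals $2$ while the shift leaves depth unchanged, $\depth(S/(I^t:(x_2x_3)^{\ell}))=2$. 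Taking $\ell=0$ gives $\reg(S/I^t)=2t\omega_1-1$ and $\depth(S/I^t)=2$.

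The main obstacle is precisely this last subcase: one must correctly recognise every successive quotient as a shorter integrally closed weighted path so that the $t=1$ formulas of Theorem \ref{smalln} apply, and then verify at each rung of the chain that the numerical gaps between the shifted regularities, and the constancy of the depths, satisfy the equality hypotheses of Lemma \ref{exact}. Once the two arithmetic facts $2a\omega_1-1>2(a-1)\omega_1+1$ and $\depth(\text{quotient})=2\neq 2-1=\depth(\text{subideal})-1$ are checked uniformly in $a$, telescoping through the chain is routine, and the $n=3$ and $n=4$-subcase-(a) inductions are then entirely analogous applications of the same two-sequence scheme.
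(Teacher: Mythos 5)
Your proposal follows the paper's own strategy case by case: the two nested colon sequences built from Lemma \ref{pathcolon} for $n=3$ and for $n=4$ with $\omega_1=\omega_3=1<\omega_2$ are exactly the paper's sequences (11) and (13), and your telescoping chain of colons by powers of $x_2x_3$, with each quotient recognized via Lemma \ref{pathcolon2} as the edge ideal of the integrally closed path with weights $(a\omega_1,1,a\omega_3)$, is precisely the paper's treatment of $n=4$ with $\omega_2=1$ and $\omega_1\ge\omega_3\ge 2$; your verification of the gap conditions in Lemma \ref{exact} along that chain is correct. However, there is a genuine gap: your case analysis for $n=4$ omits the integrally closed path with exactly one non-trivial weight on an end edge, i.e.\ $\omega_1\ge 2$ and $\omega_2=\omega_3=1$. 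This configuration is allowed by Remark \ref{n-path} and Corollary \ref{integ}, and the unconditional regularity assertion $\reg(S/I(P_\omega^4)^t)=2t\omega-1$ must be proved for it, but it falls outside both of your schemes: Lemma \ref{pathcolon2} requires $\omega_3\ge 2$, so the chain argument is unavailable, and the two-step colon scheme, although Lemma \ref{pathcolon} does apply (here $\omega_{n-1}=1$), breaks down at its last step.

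Concretely, with $I=((x_1x_2)^{\omega_1},x_2x_3,x_3x_4)$ one gets $((I^t:x_4),x_3)=((x_1x_2)^{t\omega_1},x_3)$, whose regularity is $2t\omega_1-1$: the non-trivial edge survives the deletion of $x_3$, unlike in your subcase $\omega_1=\omega_3=1<\omega_2$, where this quotient is $((x_1x_2)^t,x_3)$ with small regularity $2t-1$. The inner sequence then gives $\reg(S/(I^t:x_4))=2t\omega_1-1$, so in the outer sequence the shifted submodule has regularity $2t\omega_1$ while the quotient has regularity $2t\omega_1-1$; this is exactly the excluded case $\reg(P)=\reg(M)-1$ of Lemma \ref{exact}(1), and one can only conclude $\reg(S/I^t)\le 2t\omega_1$. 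The paper closes this case by a different decomposition: it colons by the \emph{product} $x_3x_4$, so that $(I^t:x_3x_4)=I^{t-1}$ enters with a degree-$2$ shift and regularity $2(t-1)\omega_1+1$, safely below $2t\omega_1-1$, and it computes $\reg(S/(I^t,x_3x_4))$ via the identification $(I^t,x_3x_4)=(J^t,x_3x_4)$ with $J=((x_1x_2)^{\omega_1},x_2x_3)$ and two further exact sequences. You would need to add this (or an equivalent) argument to complete the proof. A second, harmless imprecision: in your subcase $\omega_1=\omega_3=1<\omega_2$, when $t=2$ and $\omega_2=2$ the inner sequence also hits the ambiguous case ($\reg(P)=2t-1=\reg(M)-1$), so you only get $\reg(S/(I^t:x_4))\le 2(t-1)\omega_2$ rather than equality; since your final step uses only this upper bound, the conclusion stands, and the paper's write-up shares the same slip.
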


\begin{proof}
Let $I=I(P_{\omega}^n)$. The case where $n=2$ is straightforward. If $n=3$, then $I=((x_1x_2)^{\omega_1},x_2x_3)$ with $\omega_1\ge 2$,  as proven in  Lemma  \ref{integral}. We will prove these assertions  by induction on $t$. The case where $t=1$ follows from Theorem \ref{smalln}. In the following,  we  assume that $t \ge 2$. Consider the following short exact sequences
\begin{gather*}
\hspace{2.0cm}\begin{matrix}
 0 & \rightarrow & \frac{S}{I^t : x_3}(-1)  & \stackrel{ \cdot x_3} \longrightarrow  & \frac{S}{I^t} & \rightarrow & \frac{S}{(I^t, x_3)} & \rightarrow & 0 ,& \hspace{1.2cm}(11)\\
  0 & \rightarrow & \frac{S}{(I^t : x_3) : x_2}(-1) & \stackrel{ \cdot  x_2} \longrightarrow & \frac{S}{I^t : x_3} &\rightarrow & \frac{S}{((I^t : x_3), x_2)} & \rightarrow & 0.
 \end{matrix}
\end{gather*}
Using  Lemma \ref{sum2}, Lemma \ref{pathcolon} and the inductive hypothesis, we can deduce that $\reg(S/((I^t : x_3): x_2))=\reg(S/I^{t-1})=2(t-1)\omega_1-1$,
  $\reg(S/((I^t: x_3),x_2)=\reg(S/(x_2))=0$ and $\reg(S/(I^t,x_3))=\reg(S/((x_1x_2)^{t\omega_1},x_3))=2t\omega_1-1$. Additionally, we can see that  $\depth(S/((I^t : x_3): x_2))=\depth(S/I^{t-1})=1$, $\depth(S/((I^t: x_3),x_2)=\depth(S/(x_2))=2$ and $\depth(S/(I^t,x_3))=\depth(S/((x_1x_2)^{t\omega_1},x_3))=1$.
Using Lemma \ref{exact} and  the short exact sequences  (11), we can determine that $\reg (S/I^t)=2t\omega_1-1$ and $\depth(S/I^t)=1$.

If $n=4$, then $I=((x_1x_2)^{\omega_1},(x_2x_3)^{\omega_2},(x_3x_4)^{\omega_3})$ with $\omega_2=1$ or $\omega_1=\omega_3=1$ by  Lemma  \ref{integral}.
There are two cases to consider:

Firstly, if $\omega_2=1$, then we can assume that $\omega_1\ge \omega_3$ by symmetry. The assertions are proven by induction on $t$ with the case $t=1$ verified  in Theorem \ref{smalln}. We assume that $t \ge 2$ 
and proceed as follows: 

(a) If $\omega_3=1$, then  $(I^t,x_3x_4)=(J^t,x_3x_4)$ where  $J=((x_1x_2)^{\omega_1},x_2x_3)$. Consider the following short exact sequences:
\begin{gather*}
\begin{matrix}
 0 & \rightarrow & \frac{S}{I^t:x_3x_4}(-2)  & \stackrel{\cdot x_3x_4} \longrightarrow  & \frac{S}{I^t} & \rightarrow & \frac{S}{(I^t, x_3x_4)} & \rightarrow & 0,  \\
  0 & \rightarrow & \frac{S}{(J^t, x_3x_4): x_3}(-1) & \stackrel{ \cdot x_3} \longrightarrow & \frac{S}{(J^t, x_3x_4)} &\rightarrow & \frac{S}{(J^t,x_3)} & \rightarrow & 0, &  \hspace{0.8cm}(12)\\
 0&  \rightarrow & \frac{S}{((J^t, x_3x_4): x_3): x_2}(-1) & \stackrel{ \cdot x_2} \longrightarrow & \frac{S}{(J^t, x_3x_4): x_3)}& \rightarrow & \frac{S}{(((J^t, x_3x_4): x_3),x_2)}& \rightarrow & 0.
 \end{matrix}
\end{gather*}
Lemma \ref{pathcolon} implies that  $(I^t:x_3x_4)=I^{t-1}$, $(J^t,x_3)=((x_1x_2)^{t\omega_1},x_3)$, $((J^t, x_3x_4): x_3) : x_2=(J^{t-1},x_4)$, $(((J^t, x_3x_4) : x_3),x_2)=(x_2,x_4)$. Therefore,  $\reg(S/(J^{t-1},x_4))=2(t-1)\omega_1-1$, $\reg(S/I^{t-1})=2(t-1)\omega_1-1$, $\reg (S/((x_1x_2)^{t\omega_1},x_3))=2t\omega_1-1$ and  $\reg (S/(x_2,x_4))=0$   by Lemma \ref{sum2} and
the inductive hypothesis.
From Lemma \ref{exact} and the above  short exact sequences (12), we can conclude $\reg (S/I^t)=2t\omega_1-1$.

(b) If $\omega_3>1$, then  $I=((x_1x_2)^{\omega_1},x_2x_3,(x_3x_4)^{\omega_3})$. Consider
the  exact sequences:
\begin{gather*}
\hspace{1.0cm}\begin{matrix}
 0 & \rightarrow & \frac{S}{I^t: x_2x_3}(-2)  & \stackrel{ \cdot x_2x_3} \longrightarrow  & \frac{S}{I^t} & \rightarrow & \frac{S}{(I^t, x_2x_3)} & \rightarrow & 0, \\
  0 & \rightarrow & \frac{S}{I^t:(x_2x_3)^2}(-2) & \stackrel{ \cdot  x_2x_3} \longrightarrow & \frac{S}{I^t \colon x_2x_3} &\rightarrow & \frac{S}{((I^t:x_2x_3), x_2x_3)} & \rightarrow & 0,  \\
     &  &\vdots&  &\vdots&  &\vdots&  &\\
 0&  \rightarrow & \frac{S}{I^t: (x_2x_3)^{t-2}}(-2) & \stackrel{ \cdot x_2x_3} \longrightarrow & \frac{S}{I^t: (x_2x_3)^{t-3}}& \rightarrow & \frac{S}{((I^t: (x_2x_3)^{t-3}), x_2x_3)}& \rightarrow & 0, \\
 0&  \rightarrow & \frac{S}{I^t : (x_2x_3)^{t-1}}(-2) & \stackrel{ \cdot x_2x_3} \longrightarrow & \frac{S}{I^t: (x_2x_3)^{t-2}}& \rightarrow & \frac{S}{((I^t: (x_2x_3)^{t-2}), x_2x_3)}& \rightarrow & 0.
 \end{matrix}
\end{gather*}
Since $I^t : (x_2x_3)^{t-1}=I$, $((I^t :(x_2x_3)^{i}), x_2x_3)=((x_1x_2)^{(t-i)\omega_1},x_2x_3,(x_3x_4)^{(t-i)\omega_3})$ for any $i\in [t-2]$, and
 $(I^t, x_2x_3)=((x_1x_2)^{t\omega_1},x_2x_3,(x_3x_4)^{t\omega_3})$  by Lemma \ref{pathcolon2}. By Theorem \ref{smalln}, we can deduce that 
 $\reg(S/((I^t :(x_2x_3)^{i}), x_2x_3))=2(t-i)\omega_1-1$ and  $\depth(S/((I^t :(x_2x_3)^{i}), x_2x_3))=2$ for any $i=0,\ldots,t-1$. From Lemma \ref{exact} and  the above exact sequences, we can conclude that $\reg (S/I^t)=2t\omega_1-1$ and $\depth(S/I^t)=2$. 

Next,  let $\omega_1=\omega_3=1$, then $\omega_2\ge 2$, since $P_{\omega}^4$  has non-trivial weight. We will prove the assertions by induction on $t$ with
the case $t=1$  verified  in Theorem \ref{smalln}. Now we assume that $t \ge 2$ and we consider the  exact sequences
  \begin{gather*}
\hspace{2cm}\begin{matrix}
 0 & \longrightarrow & \frac{S}{I^t : x_4}(-1)  & \stackrel{ \cdot x_4} \longrightarrow  & \frac{S}{I^t} & \longrightarrow & \frac{S}{(I^t, x_4)} & \longrightarrow & 0, & \hspace{1cm}(13)\\
  0 & \longrightarrow & \frac{S}{(I^t : x_4) : x_3}(-1) & \stackrel{ \cdot  x_3} \longrightarrow & \frac{S}{I^t : x_4} &\longrightarrow & \frac{S}{((I^t : x_4),x_3)} & \longrightarrow & 0.
 \end{matrix}
\end{gather*}
Let $J=(x_1x_2,(x_2x_3)^{\omega_2})$, then, using Lemma \ref{pathcolon},  Lemma \ref{sum2} and the inductive hypothesis, we can determine that
$\reg(S/((I^t:x_4):x_3))=\reg(S/I^{t-1})=2(t-1)\omega_2-1$, $\reg(S/((I^t: x_4),x_3))=\reg(S/((x_1x_2)^t,x_3))=2t-1$ and $\reg(S/(I^t,x_4))=\reg(S/(J^t,x_4))=2t\omega_2-1$. Additionally, we can also see that $\depth(S/((I^t:x_4):x_3))=\depth(S/I^{t-1})=1$, $\depth(S/((I^t: x_4),x_3)=\depth(S/((x_1x_2)^t,x_3))=2$ and $\depth(S/(I^t,x_4))=\depth(S/(J^t,x_4))=1$. 
Applying Lemma \ref{exact} to the  exact sequences (13), we obtain that
 $\reg (S/I^t)=2t\omega_2-1$ and $\depth(S/I^t)=1$.
This concludes the proof.$\hfill$
\end{proof}

 The following proposition can be shown  using similar arguments as the proof of Theorem \ref{small2}, provided that $\omega_2\ge 2$ and $\omega_1=\omega_3=1$,  we omit its proof.
\begin{Proposition}\label{path power5}
Let $P_\omega^4$ be a  path  as in Remark \ref{n-path}. If $\omega_1 > 0$ and $\omega_2=\omega_3=1$, then $\depth(S/I(P_{\omega}^4)^t) \ge 1$ for any $t \ge 2$.
\end{Proposition}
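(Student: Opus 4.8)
The plan is to imitate the final case of the proof of Theorem~\ref{small2} (the case $\omega_1=\omega_3=1$, $\omega_2\ge 2$): peel off the two rightmost variables $x_4$ and then $x_3$, and run an induction on $t$. Write $I=I(P_\omega^4)=((x_1x_2)^{\omega_1},x_2x_3,x_3x_4)$ (so $\omega_2=\omega_3=1$). Since $\omega_{n-1}=\omega_3=1$, Lemma~\ref{pathcolon} applies with $n=4$. The crucial economy is that only a \emph{lower} bound is wanted, so I will use only part~(2) of Lemma~\ref{exact}, whose inequality $\depth(N)\ge\min\{\depth(M),\depth(P)\}$ holds unconditionally; this spares me all the delicate equality checks that the exact computations in Theorem~\ref{small2} require.

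First I would record the base case: by Theorem~\ref{smalln}(3) (case $\omega_2=1$) one has $\depth(S/I)=2\ge 1$, so the claim holds at $t=1$. Now fix $t\ge 2$ and assume inductively that $\depth(S/I^{t-1})\ge 1$. The two short exact sequences to analyze are
$$0 \to \frac{S}{I^t:x_4}(-1)\stackrel{\cdot x_4}{\longrightarrow}\frac{S}{I^t}\to \frac{S}{(I^t,x_4)}\to 0$$
and
$$0\to \frac{S}{(I^t:x_4):x_3}(-1)\stackrel{\cdot x_3}{\longrightarrow}\frac{S}{I^t:x_4}\to \frac{S}{((I^t:x_4),x_3)}\to 0.$$

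The key ideal identifications all come from Lemma~\ref{pathcolon}. By part~(1), $(I^t:x_4):x_3=I^t:x_3x_4=I^{t-1}$. By part~(3), $(I^t,x_4)=(I(P_\omega^4\setminus x_4)^t,x_4)=(J^t,x_4)$, where $J=((x_1x_2)^{\omega_1},x_2x_3)$ is the edge ideal of the sub-path $P_\omega^3$ on $\{x_1,x_2,x_3\}$; setting $S'=\KK[x_1,x_2,x_3]$ gives $\depth(S/(J^t,x_4))=\depth(S'/J^t)=1$, using Theorem~\ref{small2}(2) when $\omega_1\ge 2$ and Lemma~\ref{trivialpath}(2) when $\omega_1=1$ (in both cases the value is $1$ for all $t\ge 1$). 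By part~(2), $((I^t:x_4),x_3)=(I(P_\omega^4\setminus x_3)^t,x_3)=((x_1x_2)^{t\omega_1},x_3)$, since deleting $x_3$ leaves only the single weighted edge $\{x_1,x_2\}$; hence $\depth(S/((x_1x_2)^{t\omega_1},x_3))=\depth(\KK[x_1,x_2,x_4]/((x_1x_2)^{t\omega_1}))=2$, a hypersurface quotient of a three-variable polynomial ring.

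Feeding these depths into Lemma~\ref{exact}(2) closes the induction: the second sequence yields $\depth(S/(I^t:x_4))\ge\min\{\depth(S/I^{t-1}),2\}\ge 1$ by the inductive hypothesis, and then the first sequence yields $\depth(S/I^t)\ge\min\{\depth(S/(I^t:x_4)),\depth(S/(I^t,x_4))\}\ge\min\{1,1\}=1$. The only place demanding care—the ``main obstacle''—is the correct bookkeeping of the reduced ideals after deleting $x_4$ and $x_3$, together with the uniform verification that the sub-path contributes $\depth(S'/J^t)=1$ for \emph{every} $\omega_1\ge 1$; once these reductions are pinned down, the unconditional inequality of Lemma~\ref{exact}(2) delivers the bound immediately, with no need to track the equality cases.
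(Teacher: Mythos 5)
Your proof is correct and is essentially the argument the paper has in mind: the paper omits the proof of this proposition, remarking only that it follows by the same arguments as the last case of Theorem~\ref{small2} (colon out $x_4$, then $x_3$, apply Lemma~\ref{pathcolon} and induct on $t$), which is exactly what you carried out. Your observation that only the unconditional inequality in Lemma~\ref{exact}(2) is needed, together with the uniform verification that $\depth(S'/J^t)=1$ for both $\omega_1=1$ and $\omega_1\ge 2$, is a correct and clean execution of that intended argument.
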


In the following, we compute the  regularity of powers of the edge ideal of a path $P_{\omega}^n$ with $n\ge 5$, as described in Remark \ref{n-path}.

\begin{Theorem}
\label{path power3} 
 Let $P_{\omega}^n$ be a path as  in Remark \ref{n-path}, where $n\ge 5$. 
If $\omega_1=\max\{\omega_i\mid i\in [n-1]\}$, then  $\reg(S/I(P_{\omega}^n)^t) \le 2(t-1)\omega_1+\reg(S/I(P_{\omega}^n))$.
\end{Theorem}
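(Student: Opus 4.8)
The plan is to prove the inequality by a double induction on the number of vertices $n$ and on the exponent $t$, peeling off one unit of the power at the weight-$1$ endpoint while shortening the path from that same end. First I would dispose of the trivially weighted case at once: if every $\omega_i=1$, then Lemma \ref{trivialpath}(4) gives $\reg(S/I(P_\omega^n)^t)=\reg(S/I(P_\omega^n))+2(t-1)$, which is exactly the asserted bound with $\omega_1=1$. So from now on I assume $\omega_1\ge 2$. By Corollary \ref{integ} the path carries at most one further non-trivial edge $e_j$ with $j\ge 3$, and no two consecutive edges are heavy; in particular, unless the second heavy edge happens to be $e_{n-1}$, the endpoint edge $e_{n-1}$ has weight $1$. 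Since the statement is invariant under reversing the vertex order, I would choose the labelling so that $x_n$ sits at such a light end, i.e. $\omega_{n-1}=1$, which is exactly the hypothesis needed to apply Lemma \ref{pathcolon}. The base case $n\le 4$ is Theorem \ref{small2}, where one even has the exact value $2t\omega_1-1=2(t-1)\omega_1+\reg(S/I(P_\omega^n))$, so the bound holds with equality; the base case $t=1$ is trivial.

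For the inductive step, writing $I=I(P_\omega^n)$, I would feed the short exact sequence
\[
0\longrightarrow \frac{S}{I^t:x_n}(-1)\xrightarrow{\,\cdot x_n\,}\frac{S}{I^t}\longrightarrow \frac{S}{(I^t,x_n)}\longrightarrow 0
\]
into Lemma \ref{exact}(1). By Lemma \ref{pathcolon}(3) the quotient is $(I(P_\omega^{n-1})^t,x_n)$, whose regularity equals $\reg(S/I(P_\omega^{n-1})^t)$ after the fresh variable $x_n$ is split off by Lemma \ref{sum2}(1). To treat the colon term $S/(I^t:x_n)$ I would use a second short exact sequence, now multiplying by $x_{n-1}$: here $(I^t:x_n):x_{n-1}=I^t:x_{n-1}x_n=I^{t-1}$ by Lemma \ref{pathcolon}(1), and $((I^t:x_n),x_{n-1})=(I(P_\omega^{n-2})^t,x_{n-1})$ by Lemma \ref{pathcolon}(2), so its regularity is $\reg(S/I(P_\omega^{n-2})^t)$. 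Combining the two sequences through Lemma \ref{exact}(1) yields
\[
\reg\!\left(S/I^t\right)\le \max\Big\{\reg(S/I^{t-1})+2,\ \reg(S/I(P_\omega^{n-2})^t)+1,\ \reg(S/I(P_\omega^{n-1})^t)\Big\}.
\]

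It then remains to assemble the three contributions. Deleting $x_n$, respectively $x_{n-1}$, from the light end keeps the heavy edge $e_1$ inside every sub-path, so $P_\omega^{n-1}$ and $P_\omega^{n-2}$ are again non-trivially weighted integrally closed paths with the \emph{same} maximal weight $\omega_1$; the induction on $n$ therefore bounds the last two terms by $2(t-1)\omega_1+\reg(S/I(P_\omega^{n-1}))$ and $2(t-1)\omega_1+\reg(S/I(P_\omega^{n-2}))+1$. The lower-power term is harmless: the induction on $t$ gives $\reg(S/I^{t-1})+2\le 2(t-2)\omega_1+\reg(S/I(P_\omega^n))+2\le 2(t-1)\omega_1+\reg(S/I(P_\omega^n))$ since $\omega_1\ge 1$. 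Thus everything reduces to the two combinatorial comparisons $\reg(S/I(P_\omega^{n-1}))\le \reg(S/I(P_\omega^n))$ and $\reg(S/I(P_\omega^{n-2}))+1\le \reg(S/I(P_\omega^n))$, which I would read off the closed formula of Theorem \ref{path}.

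The hard part is precisely this last comparison for the shorter sub-path $P_\omega^{n-2}$. The factor $x_{n-1}$ forces a degree shift of $+1$, whereas the regularity of Theorem \ref{path} grows only through the floors $\lfloor\,\cdot/3\rfloor$, so over a two-vertex deletion the naive estimate can be short by one exactly when $n\equiv 1\pmod 3$ (for instance with $\omega_1=2$ and all other weights trivial, $\reg(S/I(P_\omega^{n-2}))=\reg(S/I(P_\omega^{n}))$). To close this gap I would not delete two vertices at a time but match the period of the formula, reducing $P_\omega^n$ directly to $P_\omega^{n-3}$ through a chain of the exact sequences above (so that the base regularity drops by exactly one, in parallel with the trivially weighted reduction behind Lemma \ref{trivialpath}), possibly strengthening the inductive statement so that the recursion is self-consistent on the nose rather than up to $\max$. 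A secondary case to dispatch is the configuration in which the second heavy edge is the far endpoint $e_{n-1}$, so that no light end survives once $e_1$ is fixed; there I would instead split at an interior weight-$1$ edge, which exists because $e_1$ and $e_{n-1}$ are non-adjacent for $n\ge 5$, via a Betti splitting as in the proof of Theorem \ref{path}, and feed the resulting pieces into the same induction.
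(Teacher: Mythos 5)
Your setup is sound and uses exactly the paper's toolkit (Lemma \ref{pathcolon}, Lemma \ref{exact}, Lemma \ref{sum2}, induction on $n$ and $t$ with Theorem \ref{small2} and Theorem \ref{path} as anchors), but the order in which you take the two exact sequences creates a gap that your proposal acknowledges and never actually closes. Coloning by $x_n$ first and then by $x_{n-1}$ forces the term $\reg(S/I(P_\omega^{n-2})^t)+1$ into your maximum, and as you yourself compute, induction on $n$ plus Theorem \ref{path} bounds this only by $2(t-1)\omega_1+\reg(S/I(P_\omega^{n-2}))+1$, which exceeds the target by $1$ whenever $n\equiv 1\pmod 3$. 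The repair you sketch --- ``chain the exact sequences to reach $P_\omega^{n-3}$, possibly strengthening the inductive statement'' --- does not follow from the sequences you wrote down: if you iterate your own decomposition on the offending ideal $I(P_\omega^{n-2})^t$, the shifts compound, and you produce a term $\reg(S/I(P_\omega^{n-4})^t)+2$, which fails the same comparison in other congruence classes. So the key combinatorial obstruction is named but not overcome, and this is precisely the nontrivial content of the theorem.

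The paper's proof resolves it by reversing the order of the colon operations. For $n\ge 6$ one colons by $x_{n-1}$ \emph{first}, so that $(I^t,x_{n-1})=(I(P_\omega^{n-2})^t,x_{n-1})$ enters as the quotient term with \emph{no} degree shift; the colon term $(I^t:x_{n-1})$ is then split by $x_n$, where $(I^t:x_{n-1}):x_n=I^{t-1}$ and, by Lemma \ref{pathcolon}(5), $((I^t:x_{n-1}),x_n)=((J^t:x_{n-1}),x_n)$ with $J=I(P_\omega^{n-1})$; finally a third sequence applied to $J^t:x_{n-1}$ (colon and quotient by $x_{n-2}$) produces $J^{t-1}$ and $(I(P_\omega^n\setminus\{x_n,x_{n-2}\})^t,x_{n-2})$, i.e.\ the path on $n-3$ vertices carrying a total shift of exactly $+1$. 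The estimate then closes in every congruence class because $\lfloor\frac{n-5}{3}\rfloor+1=\lfloor\frac{n-2}{3}\rfloor$, while the $P_\omega^{n-2}$ term only needs $\lfloor\frac{n-4}{3}\rfloor\le\lfloor\frac{n-2}{3}\rfloor$; the case $n=5$ (where the third sequence is unavailable because $e_3$ may be heavy) is handled separately as a base case. Two smaller remarks: your ``secondary case'' in which the second heavy edge is $e_{n-1}$ cannot occur for $n\ge 5$, since integral closedness forces the two heavy edges to sit at positions $i$ and $i+2$ (this is the standing assumption of Theorem \ref{path}), so with $\omega_1\ge 2$ the only possible heavy edges are $e_1$ and $e_3$; and your reduction to Betti splitting there is in any case unspecified. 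As written, therefore, the proposal is not a complete proof.
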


\begin{proof}Let $I=I(P_{\omega}^n)$. We  will prove the assertions  by induction on $n$ and $t$. The case where $t=1$ is trivial. In the following,  we assume that $t \ge 2$.
We have two cases:

(a) If  $n=5$, then, by Theorems \ref{smalln},  \ref{path},  Lemma \ref{pathcolon} and the inductive hypothesis, we can derive that $\reg(S/((I^t: x_5):x_{4})=\reg(S/I^{t-1})\le 2\omega_1+2(t-2)\omega_1$, $\reg(S/((I^t:x_5),x_{4})=\reg(S/(I(P_\omega^5\setminus {x_{4}})^t,x_{4})\le (2\omega_1-1)+2(t-1)\omega_1$ and $\reg(S/(I^t,x_5))=\reg(S/(I(P_\omega^5\setminus{x_5})^t,x_5))\le (2\omega_1-1)+2(t-1)\omega_1$. 
The desired result follows  from Theorem   \ref{path} and the following  exact sequences
\begin{gather*}
\hspace{2cm}\begin{matrix}
 0 & \rightarrow & \frac{S}{I^t : x_5}(-1)  & \stackrel{ \cdot x_5} \longrightarrow  & \frac{S}{I^t} & \rightarrow & \frac{S}{(I^t, x_5)} & \rightarrow & 0, \\
  0 & \rightarrow & \frac{S}{(I^t : x_5) : x_4}(-1) & \stackrel{ \cdot  x_4} \longrightarrow & \frac{S}{I^t : x_5} &\rightarrow & \frac{S}{((I^t : x_5),x_4)} & \rightarrow & 0.
 \end{matrix}
\end{gather*}

(b) Suppose that  $n \ge 6$. In this case, let $J=I(P_\omega^n\setminus{x_n})$, then,   by Lemma \ref{pathcolon}, we have  $(J^t: x_{n-1}):x_{n-2}=J^{t-1}$, $((J^t: x_{n-1}),x_{n-2})=(I(P_\omega^n\setminus {\{x_n,x_{n-2}\}})^t,x_{n-2})$, $(I^t: x_{n-1}): x_n=I^{t-1}$, $((I^t: x_{n-1}),x_n)=((I(P_\omega^n\setminus{x_n})^t: x_{n-1}),x_n)$ and $(I^t,x_{n-1})=(I(P_\omega^n\setminus{x_{n-1}})^t,x_{n-1})$.
It follows from Theorem \ref{path} and the inductive hypothesis that
\begin{eqnarray*}
\reg(S/((J^t: x_{n-1}):x_{n-2}))\!\!\!& \le &\!\!\!(2\omega_1-1)+\lfloor \frac{n-3}{3} \rfloor+2(t-2)\omega_1,\\
\reg(S/((J^t: x_{n-1}),x_{n-2}))\!\!\!& \le &\!\!\!(2\omega_1-1)+\lfloor \frac{n-5}{3} \rfloor+2(t-1)\omega_1,\\
\reg(S/((I^t: x_{n-1}): x_n))\!\!\!& \le &\!\!\!(2\omega_1-1)+\lfloor \frac{n-2}{3} \rfloor+2(t-2)\omega_1,\\
\reg(S/(I^t,x_{n-1}))\!\!\!& \le &\!\!\!(2\omega_1-1)+\lfloor \frac{n-4}{3} \rfloor+2(t-1)\omega_1.
\end{eqnarray*}
Applying Lemma \ref{exact} again to the following exact sequences
\begin{gather*}
\hspace{1cm}\begin{matrix}
 0 & \rightarrow & \frac{S}{I^t : x_{n-1}}(-1)  & \stackrel{ \cdot x_{n-1}} \longrightarrow  & \frac{S}{I^t} & \rightarrow & \frac{S}{(I^t, x_{n-1})} & \rightarrow & 0, \\
  0 & \rightarrow & \frac{S}{(I^t : x_{n-1}) : x_n}(-1) & \stackrel{ \cdot  x_n} \longrightarrow & \frac{S}{I^t : x_{n-1}} &\rightarrow & \frac{S}{((I^t : x_{n-1}),x_n)} & \rightarrow & 0, \\
 0 & \rightarrow & \frac{S}{(J^t : x_{n-1}) : x_{n-2}}(-1) & \stackrel{ \cdot  x_{n-2}} \longrightarrow & \frac{S}{J^t : x_{n-1}} &\rightarrow & \frac{S}{((J^t : x_{n-1}),x_{n-2})} & \rightarrow & 0,
 \end{matrix}
\end{gather*}
one has $\reg(S/I^t) \le 2\omega_1-1+\lfloor \frac{n-2}{3} \rfloor+2(t-1)\omega_1=\reg(S/I)+2(t-1)\omega_1$.
\end{proof}

\begin{Theorem}
\label{path power4}
Let $P_{\omega}^n$ be a path  as in Remark \ref{n-path}, where  $n\ge 5$. 
Then,  for any $t\ge 1$, we have
\[
\reg(S/I(P_{\omega}^n)^t) \le \reg(S/I(P_{\omega}^n))+2(t-1)\omega
\]
where $\omega=\max\{\omega_i\mid i\in [n-1]\}$.
\end{Theorem}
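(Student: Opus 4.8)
The plan is to run the same inductive machinery as in Theorem \ref{path power3}, the one genuinely new feature being that the maximum-weight edge need not sit at an end of the path. Write $I=I(P_\omega^n)$ and recall from Corollary \ref{integ} that at most two edges are non-trivially weighted; these occupy a window of at most three consecutive positions, the maximum weight $\omega$ is attained at one of them, and every other edge has weight $1$. Two structural facts will be used repeatedly. First, every induced subpath of $P_\omega^n$ is again integrally closed, since a forbidden induced subgraph of a subpath would be a forbidden induced subgraph of $P_\omega^n$, so Lemma \ref{integral} applies; consequently Theorems \ref{smalln}, \ref{path} and \ref{small2} may be invoked for each subpath. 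Second, the maximum weight of any such subpath is at most $\omega$, so any inductive estimate stated with the subpath's own maximum weight only weakens when the coefficient of $(t-1)$ is replaced by $2\omega$. I would argue by double induction on $n$ and $t$, with $t=1$ trivial and $n\le 4$ supplied by Theorem \ref{small2}.

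If the maximum-weight edge is an end edge, then after reflecting the path (which preserves all invariants) I may assume it is $e_1$, so that $\omega_1=\omega$ and the desired inequality is precisely Theorem \ref{path power3}. Hence I may assume the heavy window lies in the interior, so that each end of $P_\omega^n$ carries at least one trivially weighted edge.

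In the interior case I would peel two vertices from a suitably chosen end. Using the reflection symmetry together with Corollary \ref{integ}, I would select an end whose last two edges $e_{n-1},e_{n-2}$ are trivially weighted; this is possible once $n\ge 7$, while the finitely many residual configurations $n=5,6$ are disposed of by hand with the same two or three short exact sequences used in the $n=5$ analysis of Theorem \ref{path power3}. With $e_{n-1}$ trivially weighted I apply the short exact sequences of Theorem \ref{path power3} (peeling $x_{n-1}$ and then $x_n$) together with the auxiliary sequence for $J=I(P_\omega^n\setminus x_n)$. By Lemma \ref{pathcolon} the colon terms collapse to the lower powers $I^{t-1}$ and $J^{t-1}$, while the three quotient terms are edge ideals of the strictly shorter induced subpaths $P_\omega^n\setminus x_{n-1}$, $P_\omega^n\setminus x_n$ and $P_\omega^{n-1}\setminus x_{n-2}$, up to extra free polynomial variables. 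The $t$-induction bounds the colon terms by $\reg(S/I)+2(t-2)\omega$, the $n$-induction bounds each quotient term by $\reg(S/I(\text{subpath}))+2(t-1)\omega$, and Theorem \ref{path} bounds each $\reg(S/I(\text{subpath}))$ in terms of $\reg(S/I)$. Feeding all of this into Lemma \ref{exact}(1), and absorbing the degree shifts of the peeled monomials (which cost at most $+1$ each, harmless since $2\omega\ge 2$), yields $\reg(S/I^t)\le \reg(S/I)+2(t-1)\omega$.

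The main obstacle is the bookkeeping through the chain of short exact sequences: one must verify that, after the degree shifts, the maximum of the colon- and quotient-contributions never exceeds the target $\reg(S/I)+2(t-1)\omega$. This is where the exact formula of Theorem \ref{path} is essential, and it is also what forces the separate treatment of the boundary configurations in which the peeling window abuts the heavy edges, namely small $n$ and the situation where a heavy edge is adjacent to the chosen end, together with the reflection choice of the peeling end that keeps the removed edges light.
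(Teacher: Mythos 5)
Your proposal is correct in substance and runs on the same toolkit as the paper's proof (double induction on $n$ and $t$, the colon identities of Lemma \ref{pathcolon}, short exact sequences fed into Lemma \ref{exact}, and Theorem \ref{path} to compare subpath regularities with $\reg(S/I)$), but it organizes the hardest case genuinely differently. The paper always peels from the left end of the path: when the heavy window starts at $i\ge 3$ this is exactly your ``doubly trivial end'' argument (the paper's case (2), with $K=I(P_\omega^n\setminus x_1)$ playing the role of your $J$, is the mirror image of your three sequences), but when $i=2$ only the single edge $e_1$ is trivial at that end, and the paper pushes through anyway by running an \emph{inner} induction on $t$ for the term $((I^t:x_1),x_2)$: this term lives on $P_\omega^n\setminus x_2$, which has lost the maximal edge, so the paper must track the secondary weight $\omega_{i+2}$ through a separate peeling of $x_3$ and $x_4$ before returning to the main induction. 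You avoid that nested induction entirely by reflecting the path and peeling from the end remote from the heavy window; since every subpath you create retains the maximal edge $e_i$, the inductive hypothesis applies verbatim with coefficient $2\omega$, and the floor terms in Theorem \ref{path} drop monotonically (by exactly $1$ for the $(n-3)$-vertex subpath, which is what absorbs the shift on the branch through $((J^t:x_{n-1}),x_{n-2})$). The price is the residual configurations $n=5,6$ with both $e_2$ and $e_4$ heavy, where neither end has two trivial edges; there your plan needs one correction: the sequences of Theorem \ref{path power3} peel the right end, whereas in these configurations you must peel $x_1$ and then $x_2$ (Lemma \ref{pathcolon} applied after reflection, legitimate since $\omega_1=1$), and the quotient term then lands on a $3$- or $4$-vertex path with one heavy edge whose powers are given exactly by Theorem \ref{small2} --- in other words, the residual argument is a miniature of the paper's $i=2$ case rather than of its $n=5$ analysis. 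Net comparison: your route is more uniform and dispenses with the secondary-weight bookkeeping, at the cost of a finite case analysis that the paper's fixed-end strategy avoids.
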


\begin{proof}
Let $I=I(P_{\omega}^n)$. Since $P_{\omega}^n$ has non-trivial weight, $\omega\ge 2$. If  $\omega=\omega_i$, and  $i=1$, then the desired results follow from Theorem \ref{path power3}. Assume that $i\ge 2$
 and we can prove the statements  by induction on $n$ and $t$.  The  case where $t=1$ is trivial. Now, assuming that  $t \ge 2$.

First, since $P_{\omega}^n$ is  integrally closed, it  has at most two edges with non-trivial weights by Corollary  \ref{integ}. Therefore, for $\omega_i$ and $\omega_{i+2}$, by symmetry, there exist two cases: (a) $\omega_i > \omega_{i+2} \ge 1$, or (b) $\omega_i=\omega_{i+2} \ge 2$, while for other edges $\omega_j$ where $j \neq i,i+2$,  we have $\omega_j=1$. We distinguish into two cases:

(1) If $i=2$, then  $(I^t : x_1) : x_2=I^{t-1}$, $((I^t : x_1),x_{2})=(I(P_\omega^n\setminus {x_{2}})^t,x_{2})$ and $(I^t,x_1)=(I(P_\omega^n\setminus{x_1})^t,x_1)$ by Lemma \ref{pathcolon}. We have two subcases:

(a) If $\omega_{i}>\omega_{i+2} \ge1$, then by the inductive hypothesis and Theorem \ref{path}, we can conclude that
\begin{align*}
\reg(S/((I^t : x_1): x_{2})&\le(2\omega-1)+\lfloor \frac{n-3}{3} \rfloor+2(t-2)\omega,\\
\reg(S/(I^t,x_1))&\le(2\omega-1)+\lfloor \frac{n-3}{3} \rfloor+2(t-1)\omega.\hspace{2.0cm}(14)\\
\end{align*}
We will now prove by induction on  $t$ that $\reg(S/((I^t : x_1),x_{2}))\le  (2\omega_{4}-1)+\lfloor \frac{n-5}{3} \rfloor+2(t-1)\omega_{4}$ holds for all $t\ge 1$. The  case $t=1$ has been  verified in  Theorem \ref{path}.
Now assume that $t\ge 2$.
Let  $L=I(P_\omega^n\setminus {x_{2}})$,  $M=(x_5x_6,\ldots,x_{n-1}x_n)$, then   by Lemma \ref{pathcolon},  we obtain that  $L=M+(x_3x_4,(x_4x_5)^{\omega_4})$, $(L^t : x_{3}) : x_{4}=L^{t-1}$, $((L^t : x_{3}),x_{4})=(M^t,x_{4})$ and $(L^t,x_{3})=(N^t,x_{3})$, where $N=M+((x_4x_5)^{\omega_4})$.
 Therefore, we can deduce from Theorem \ref{path power3} and the inductive hypothesis that
\begin{eqnarray*}
\reg(S/((L^t : x_{3}) : x_{4})&\le&  (2\omega_{4}-1)+\lfloor \frac{n-5}{3} \rfloor+2(t-2)\omega_{4}, \\
\reg(S/((L^t : x_{3}),x_{4}))&\le&  \lfloor \frac{n-3}{3} \rfloor+2(t-1),\\
\reg(S/(L^t,x_{3}))&\le& (2\omega_{4}-1)+\lfloor \frac{n-5}{3} \rfloor+2(t-1)\omega_{4}.
\end{eqnarray*}
Applying Lemma \ref{exact} to the following exact sequences
\begin{gather*}
\hspace{1cm}\begin{matrix}
 0 & \rightarrow & \frac{S}{L^t : x_{3}}(-1)  & \stackrel{ \cdot x_{3}} \longrightarrow  & \frac{S}{L^t} & \rightarrow & \frac{S}{(L^t, x_{3})} & \rightarrow & 0, \\
  0 & \rightarrow & \frac{S}{(L^t : x_{3}) : x_{4}}(-1) & \stackrel{ \cdot  x_{4}} \longrightarrow & \frac{S}{L^t : x_{3}} &\rightarrow & \frac{S}{((L^t : x_{3}),x_{4})} & \rightarrow & 0,
 \end{matrix}
\end{gather*}
 we obtain  $\reg(S/((I^t : x_1),x_{2}))\le  (2\omega_{4}-1)+\lfloor \frac{n-5}{3} \rfloor+2(t-1)\omega_{4}$.

(b)  If  $\omega_i=\omega_{i+2}\ge 2$, then,  by Theorem \ref{path} and the inductive
hypothesis,    we obtain
\begin{eqnarray*}
\reg(S/((I^t : x_1): x_{2}))&\le&2\omega+\lfloor \frac{n-5}{3} \rfloor+2(t-2)\omega,\\
\reg(S/(I^t,x_1))&\le&(2\omega-1)+\lfloor \frac{n-3}{3} \rfloor+2(t-1)\omega.\hspace{2.0cm}(15)\\
\end{eqnarray*}
Using similar arguments as the proof of  regularity of $S/((I^t : x_1),x_{2})$ in part (a), We can also obtain 
\begin{eqnarray*}
\reg(S/((I^t  \colon x_1),x_{2}))& \le &(2\omega-1)+\lfloor \frac{n-5}{3} \rfloor+2(t-1)\omega. \hspace{2.0cm}(16)\\
\end{eqnarray*}

In the two cases mentioned above, we can obtain $\reg(S/I(P_{\omega}^n)^t) \le \reg(S/I(P_{\omega}^n))+2(t-1)\omega$ by applying  formulas (14)$\sim$(16) and Lemma \ref{exact}  to the  following exact sequences
\begin{gather*}
\hspace{1cm}\begin{matrix}
 0 & \rightarrow & \frac{S}{I^t : x_1}(-1)  & \stackrel{ \cdot x_1} \longrightarrow  & \frac{S}{I^t} & \rightarrow & \frac{S}{(I^t, x_1)} & \rightarrow & 0,\\
  0 & \rightarrow & \frac{S}{(I^t : x_1) : x_{2}}(-1) & \stackrel{ \cdot  x_{2}} \rightarrow & \frac{S}{I^t : x_1} &\rightarrow & \frac{S}{((I^t : x_1),x_{2})} & \rightarrow & 0.
 \end{matrix}
\end{gather*}

(2) If $i\ge 3$, then $n \ge 6$.  By Lemma \ref{pathcolon}, we can see that
$(I^t,x_{2})=(I(P_\omega^n\setminus{x_{2}})^t,x_{2})$,  $(I^t : x_{2}): x_1=I^{t-1}$, $((I^t : x_{2}),x_1)=((K^t:x_{2}),x_1)$, $(K^t : x_{2}) : x_{3}=K^{t-1}$ and $((K^t : x_{2}),x_{3})=(I(P_\omega^n\setminus {\{x_1,x_{3}\}})^t,x_{3})$, where  $K=I(P_\omega^n\setminus{x_1})$. There are also  two subcases.

(a) If $\omega_{i}>\omega_{i+2} \ge 1$, then
by Theorem \ref{path} and  the inductive hypothesis, we can conclude that
\begin{eqnarray*}
\reg(S/(I^t,x_{2}))&\le& A+\lfloor \frac{i-3}{3} \rfloor+\lfloor \frac{n-(i+1)}{3} \rfloor,\\
\reg(S/(I^t : x_{2}): x_1)&\le&(A-2\omega)+\lfloor \frac{i-1}{3} \rfloor+\lfloor \frac{n-(i+1)}{3} \rfloor,\\
\reg(S/(K^t : x_{2}) : x_{3})&\le&(A-2\omega)+\lfloor \frac{i-2}{3} \rfloor+\lfloor \frac{n-(i+1)}{3} \rfloor,\\
\reg(S/((K^t : x_{2}),x_{3}))&\le&\begin{cases}
            A+\lfloor \frac{n-7}{3} \rfloor,             & \text{if $i=3$}        \\
            A+\lfloor \frac{i-4}{3} \rfloor+\lfloor \frac{n-(i+1)}{3} \rfloor, & \text{if $i<3$}\hspace{1.0cm}(17)\\
        \end{cases}
\end{eqnarray*}

where $A=(2\omega-1)+2\omega(t-1)$.

(b) If $\omega_i=\omega_{i+2}\ge 2$, then by Theorem \ref{path} and the inductive hypothesis, we have
\begin{footnotesize}
\begin{eqnarray*}
\reg\left(\frac{S}{(I^t,x_{2})}\right) &\le& A+\max\{\lfloor \frac{i-3}{3} \rfloor+\lfloor \frac{n-(i+1)}{3} \rfloor, \lfloor \frac{i-4}{3} \rfloor+\lfloor \frac{n-i}{3} \rfloor\}\\
 &\le& \reg(S/I)+2(t-1)\omega  \\
\reg\left(\frac{S}{(I^t : x_{2}): x_1}\right) &\le& (A-2\omega)+\max\{\lfloor \frac{i-1}{3} \rfloor+\lfloor \frac{n-(i+1)}{3} \rfloor, \lfloor \frac{i-2}{3} \rfloor+\lfloor \frac{n-i}{3} \rfloor\}\\
 &\le& \reg(S/I)+2(t-1)\omega-2\omega \\
\reg\left(\frac{S}{(K^t : x_{2}) : x_{3}}\right) &\le& (A-2\omega)+\max\{\lfloor \frac{i-2}{3} \rfloor+\lfloor \frac{n-(i+1)}{3} \rfloor,\lfloor \frac{i-3}{3} \rfloor+\lfloor \frac{n-i}{3} \rfloor\} \\
 &\le& \reg(S/I)+2(t-1)\omega-2\omega \\
\reg\left(\frac{S}{(K^t : x_{2}),x_{3}}\right) &\le& \begin{cases}
            A+\lfloor \frac{n-6}{3} \rfloor,             & \text{if $i=3$}        \\
            A+\max\{\lfloor \frac{i-4}{3} \rfloor+\lfloor \frac{n-(i+1)}{3} \rfloor,\lfloor \frac{i-5}{3} \rfloor+\lfloor \frac{n-i}{3} \rfloor\}, & \text{if $i<3$}\\
        \end{cases}  \\
&\le&  \reg(S/I)+2(t-1)\omega-1.    \hspace{5.5cm}(18)
\end{eqnarray*}
\end{footnotesize}

In the two cases mentioned above, by applying formulas (17) and  (18) as well as  Lemma \ref{exact}  to the  following exact sequences
\begin{gather*}
\hspace{1cm}\begin{matrix}
 0 & \rightarrow & \frac{S}{I^t : x_{2}}(-1)  & \stackrel{ \cdot x_{2}} \longrightarrow  & \frac{S}{I^t} & \rightarrow & \frac{S}{(I^t, x_{2})} & \rightarrow & 0, \\
  0 & \rightarrow & \frac{S}{(I^t : x_{2}) : x_1}(-1) & \stackrel{ \cdot  x_1} \longrightarrow & \frac{S}{I^t : x_{2}} &\rightarrow & \frac{S}{((I^t : x_{2}),x_1)} & \rightarrow & 0,\\
 0 & \rightarrow & \frac{S}{(K^t : x_{2}) : x_{3}}(-1) & \stackrel{ \cdot  x_{3}} \longrightarrow & \frac{S}{K^t : x_{2}} &\rightarrow & \frac{S}{((K^t : x_{2}),x_{3})} & \rightarrow & 0,
 \end{matrix}
\end{gather*}
It is always possible to ensure that $\reg (S/I^t) \le \reg(S/I)+2(t-1)\omega$.$\hfill$
\end{proof}

\begin{Theorem}
Let  $P_{\omega}^n$ be a path as in Remark \ref{n-path}. Then,    for any $t\ge 1$, we have
\[
\reg(S/I(P_{\omega}^n)^t)= \reg(S/I(P_{\omega}^n))+2(t-1)\omega
\]
where $\omega=\max\{\omega_i\mid i\in [n-1]\}$.
\end{Theorem}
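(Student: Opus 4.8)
The plan is to prove the equality by combining the already-established upper bound with a matching lower bound, arguing by strong induction on the pair $(n,t)$. The base cases are immediate: for $n \le 4$ the exact values in Theorem \ref{small2} together with $\reg(S/I(P_\omega^n)) = 2\omega-1$ from Theorem \ref{smalln} give $\reg(S/I(P_\omega^n)^t) = 2t\omega-1 = \reg(S/I(P_\omega^n)) + 2(t-1)\omega$, and for $t=1$ there is nothing to prove. For $n \ge 5$ and $t \ge 2$, Theorem \ref{path power4} already supplies $\reg(S/I(P_\omega^n)^t) \le \reg(S/I(P_\omega^n)) + 2(t-1)\omega$, so the entire content reduces to the reverse inequality.

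For the lower bound I would reuse short exact sequences of the form
\[
0 \to \tfrac{S}{I^t : v}(-\deg v) \xrightarrow{\,\cdot v\,} \tfrac{S}{I^t} \to \tfrac{S}{(I^t, v)} \to 0
\]
of the type that already drive the proofs of Theorems \ref{path power3} and \ref{path power4}, where $v$ is a variable (or edge monomial) chosen at a trivial-weight end of the path. Using the exact colon and sum identities of Lemma \ref{pathcolon} (and Lemma \ref{pathcolon2} near the weighted edge $e_i$), each flanking module is identified, up to a degree shift, with a power of a strictly smaller integrally closed weighted subpath that still contains the maximal-weight edge $e_i$, together with a free polynomial factor. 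Applying the inductive hypothesis to that subpath power, Lemma \ref{sum2} to split off the free factor, and Theorem \ref{path} together with Lemma \ref{trivialpath} to evaluate the purely trivially-weighted contributions, I would compute the regularity of each flanking module \emph{exactly} rather than merely bound it. The conclusion then follows from the equality clause of Lemma \ref{exact}(1): I would single out the flanking term whose regularity is exactly $\reg(S/I(P_\omega^n)) + 2(t-1)\omega$, and check that the other flanking regularity differs from it by something other than $-1$, so that $\reg(S/I^t)$ is forced to equal the maximum of the two. Matched against the upper bound, this yields the stated equality.

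The main obstacle is precisely this lower bound, and it is delicate for two reasons. First, one cannot peel off the $2\omega$ contribution of the heavy edge in a single step by a colon: the naive identity $I^t : (x_ix_{i+1})^{\omega} = I^{t-1}$ fails when $e_i$ is interior (already for the integrally closed path $x_1-x_2-x_3$ with $\omega_1=2$ and $\omega_2=1$ one has $x_3^2 \in I^2 : (x_1x_2)^2$ but $x_3^2 \notin I$), so the argument must instead route the heavy contribution through the quotient modules obtained by deleting vertices at a trivial-weight end. Second, after such a deletion the floor terms $\lfloor \tfrac{\cdot}{3}\rfloor$ describing the trivially-weighted parts shift, so one must choose which end to peel according to the residues of $i$ and $n$ modulo $3$, and case-split on whether $\omega_i > \omega_{i+2}$ or $\omega_i = \omega_{i+2}$, in order to make the flanking floor contributions reassemble exactly into $\reg(S/I(P_\omega^n))$ and to verify the non-degeneracy hypothesis of Lemma \ref{exact}(1) at each application.
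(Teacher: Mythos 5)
Your reduction of the problem to a matching lower bound is sound, and your sourcing of the base cases (Theorem \ref{small2}) and of the upper bound (Theorem \ref{path power4}) is correct, but the mechanism you propose for the lower bound --- computing both flanking modules of a colon/sum exact sequence exactly and then invoking the equality clause of Lemma \ref{exact}(1) --- fails, and it fails already in the simplest relevant configuration. Take $n=5$, $\omega_1=\omega\ge 2$ and $\omega_2=\omega_3=\omega_4=1$, so that $\reg(S/I)=2\omega$ by Theorem \ref{path} and the target is $2t\omega$. The only trivial-weight end is at $x_5$, so Lemma \ref{pathcolon} forces you to peel there. Running your scheme with the two sequences coloning by $x_5$ and then $x_4$ (the other order behaves identically), Lemma \ref{pathcolon}, Theorem \ref{small2} and induction on $t$ give \emph{exactly} $\reg(S/(I^t:x_5))=2t\omega-1$, so the submodule term $(S/(I^t:x_5))(-1)$ in the top sequence has regularity $2t\omega$, while the quotient term $S/(I^t,x_5)=S/(I(P_\omega^4)^t,x_5)$ has regularity $2t\omega-1$ by Theorem \ref{small2}. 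Thus $\reg(P)=\reg(M)-1$ holds on the nose: the non-degeneracy hypothesis of Lemma \ref{exact}(1) is violated at precisely the step that is supposed to deliver the conclusion, and the lemma then yields only $\reg(S/I^t)\le 2t\omega$, i.e.\ nothing beyond Theorem \ref{path power4}. In this degenerate case cancellation genuinely can occur in general (e.g.\ $0\to\mathfrak{m}\to S\to \KK\to 0$), so no short-exact-sequence bookkeeping with Lemma \ref{exact} alone can force the equality; and no choice of end or of colon order escapes it here, because every deletion lands on a strictly smaller path whose power has regularity exactly the target minus one. A further structural problem is that your induction hypothesis (the theorem itself) does not cover the colon modules $S/(I^t:x_{n-1})$, which are not powers of edge ideals of paths, so even the claimed ``exact'' evaluation of the middle terms needs auxiliary statements you have not formulated.

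This obstruction is exactly why the paper changes tools for the final equality. It polarizes $I^t$ and splits off the single heavy generator: $(I^t)^{\calP}=J^{\calP}+K^{\calP}$ with $\mathcal{G}(J)=\{(x_ix_{i+1})^{t\omega_i}\}$, a Betti splitting by Lemma \ref{spliting}. Corollary \ref{cor1}(1) is an \emph{equality} coming from an identity of graded Betti numbers, hence immune to the cancellation that defeats Lemma \ref{exact}(1): the term $\reg(J^{\calP}\cap K^{\calP})-1$ sits inside the maximum unconditionally, and since $J^{\calP}\cap K^{\calP}=J^{\calP}L^{\calP}$ with $J$ principal and $L$ a trivially weighted ideal in disjoint variables, Lemmas \ref{sum2} and \ref{polar} compute it exactly as $\reg(I)+2(t-1)\omega+1$; the remaining term $\reg(K^{\calP})$ is controlled by the induced-subhypergraph argument together with Theorem \ref{path power4}. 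So the lower bound you cannot extract from exact sequences falls out of the splitting for free. To salvage your plan you would need strictly stronger input in the case $\reg(P)=\reg(M)-1$ (e.g.\ vanishing of the relevant connecting maps in local cohomology, or a mapping-cone/Betti-number argument --- which is in effect what the Betti splitting provides).
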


\begin{proof} Let $I=I(P_{\omega}^n)$ and $\omega=\omega_i$, then $\omega_i\ge 2$. By applying symmetry, we can assume that
(a) $\omega_i > \omega_{i+2} \ge 1$, or (b) $\omega_i=\omega_{i+2} \ge 2$, and $\omega_j=1$ with $j \neq i,i+2$. We distinguish
between the following two cases:

(a)  Suppose  $\omega_{i}>\omega_{i+2} \ge 1$ and $\omega_j=1$ for all $j \neq i,i+2$. In this case,  let $(I^t)^{\calP}$ be the polarization of $I^t$, then  by Lemma \ref{spliting},  $(I^t)^{\calP}=J^{\calP}+K^{\calP}$, which is Betti splitting, and $J^{\calP} \cap K^{\calP}=J^{\calP}L^{\calP}$, where  $\mathcal{G}(J)=\{(x_ix_{i+1})^{t\omega_i}\}$, $\mathcal{G}(K)=\mathcal{G}(I^t) \setminus \mathcal{G}(J)$ and $L=(x_1x_2,\ldots,x_{i-3}x_{i-2})+(x_{i+3}x_{i+4},\ldots,x_{n-1}x_n)+(x_{i-1},x_{i+2})$ with $x_i=0$ if $i\le 0$. By Lemma \ref{sum2}, Lemma \ref{polar}  and Theorem
\ref{path},   it follows  that $\reg(J^{\calP})=\reg(J)=2t\omega$, 
\begin{align*}
\reg(J^{\calP} \cap K^{\calP})&=\reg(J^{\calP}L^{\calP})=\reg(J)+\reg(L)\\
&=2t\omega+(\lfloor \frac{i-1}{3} \rfloor+\lfloor \frac{n-(i+1)}{3} \rfloor+1)\\
&=\reg(I)+2(t-1)\omega+1.
\end{align*}
Let $H$ and $H'$ be hypergraphs associated with $\mathcal{G}((I^t)^{\calP})$ and $\mathcal{G}(K^{\calP})$, respectively. Then
$H'$ is an induced subhypergraph of $H$. Therefore, 
\[
\reg(K^{\calP}) \le \reg((I^t)^{\calP}) \le\reg(I)+2(t-1)\omega
\]
by Theorem \ref{path power4}.
Based on Corollary \ref{cor1} and Lemmas
\ref{quotient}  and \ref{polar}, we can conclude that 
\begin{align*}
\reg(S/I^t)&=\reg((I^t)^{\calP})-1\\
&=\max\{\reg(J^{\calP}),\reg(K^{\calP}),\reg(J^{\calP}\cap K^{\calP})-1\}-1\\
&=2(t-1)\omega+\reg(S/I).
\end{align*}
(b) If $\omega_i=\omega_{i+2} \ge 2$, then $\reg(S/I)=\max \{2\omega+\lfloor \frac{i-1}{3} \rfloor+\lfloor \frac{n-(i+1)}{3} \rfloor, 2\omega+\lfloor \frac{i-2}{3} \rfloor+\lfloor \frac{n-i}{3} \rfloor\}-1$ by Theorem \ref{path}. We have two subcases:

(i) If $\reg(S/I)=(2\omega-1)+\lfloor \frac{i-1}{3} \rfloor+\lfloor \frac{n-(i+1)}{3} \rfloor$, then the statements can be proved by arguments similar to part (a),
we omit the details.

(ii) If  $\reg(S/I)=(2\omega-1)+\lfloor \frac{i-2}{3} \rfloor+\lfloor \frac{n-i}{3} \rfloor$, then let $(I^t)^{\calP}$ be the polarization of $I^t$. In this case,  we can deduce that  
$(I^t)^{\calP}=J^{\calP}+K^{\calP}$, which  is Betti splitting, where $\mathcal{G}(J)=\{(x_{i+2}x_{i+3})^{t\omega_{i+2}}\}$ and $\mathcal{G}(K)=\mathcal{G}(I^t) \setminus \mathcal{G}(J)$. 
By arguments similar to part (a), we can conclude that
\begin{align*}
\reg(J^{\calP})&=\reg(J)=2t\omega,\\
\reg(K^{\calP}) &\le \reg((I^t)^{\calP}) \le\reg(I)+2(t-1)\omega,\\
\reg(J^{\calP} \cap K^{\calP})&=\reg(J^{\calP}L^{\calP})=\reg(J)+\reg(L)\\
&=2t\omega+(\lfloor \frac{i-2}{3} \rfloor+\lfloor \frac{n-i}{3} \rfloor+1)\\
&=\reg(I)+2(t-1)\omega+1.
\end{align*}
It follows from Corollary \ref{cor1} and Lemmas
\ref{quotient}  and \ref{polar} that
\begin{align*}
\reg(S/I^t)&=reg((I^t)^{\calP})-1=\max\{\reg(J^{\calP}),\reg(K^{\calP}),\reg(J^{\calP}\cap K^{\calP})-1\}-1\\
&=2(t-1)\omega+\reg(S/I).
  \end{align*}$\hfill$
\end{proof}

We conclude this paper by presenting the lower bound on the depth of powers of the edge ideal of a path $P_{\omega}^n$ with $n\ge 5$, as described in Remark \ref{n-path}.
\begin{Theorem} \label{tdepth1}
Let  $P_{\omega}^n$ be a path as in Remark \ref{n-path}, where $n\ge 5$.  If $\omega_1>\omega_3$  and $\omega_i=1$ for any $i\neq 1,3$. Then for any $t\ge 1$,  the following holds:
\begin{itemize}
 \item[(1)] if $\omega_3=1$, then $\depth(S/I(P_{\omega}^n)^t) \ge \max\big\{\lceil \frac{n-t+1}{3} \rceil,1\big\}$.
\item[(2)] if $\omega_3>1$, then $\depth(S/I(P_{\omega}^n)^t) \ge \max\big\{\lceil \frac{n-t+1}{3} \rceil,2 \big\}$.
\end{itemize}
\end{Theorem}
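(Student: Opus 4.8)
The plan is to prove both lower bounds simultaneously by induction on $n$ and $t$, mirroring the three–exact–sequence scheme used in the proof of Theorem~\ref{path power3}. Write $I=I(P_\omega^n)$ and set $c=1$ in case (1) and $c=2$ in case (2); the goal is to show $\depth(S/I^t)\ge\max\{\lceil\frac{n-t+1}{3}\rceil,c\}$. Since the only non-trivially weighted edges are $e_1$ and $e_3$ and $n-1\ge 4$, we always have $\omega_{n-1}=1$, so Lemma~\ref{pathcolon} applies. The base case $t=1$ is exactly the depth formula of Theorem~\ref{path} (with $i=1$), which one checks equals $\max\{\lceil\frac{n}{3}\rceil,c\}$; the small cases $n\le 4$ that arise after peeling are supplied by Theorem~\ref{small2}. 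Throughout I would use only Lemma~\ref{exact}(2), namely $\depth(N)\ge\min\{\depth(M),\depth(P)\}$ for a short exact sequence $0\to M\to N\to P\to 0$ (degree shifts being irrelevant to depth).

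For $t\ge 2$, first I would split off $x_{n-1}$ via $0\to \frac{S}{I^t:x_{n-1}}(-1)\to \frac{S}{I^t}\to\frac{S}{(I^t,x_{n-1})}\to 0$, reducing $\depth(S/I^t)$ to the minimum of $\depth(S/(I^t:x_{n-1}))$ and $\depth(S/(I^t,x_{n-1}))$. By Lemma~\ref{pathcolon}(4), $(I^t,x_{n-1})=(I(P_\omega^n\setminus x_{n-1})^t,x_{n-1})$, and since removing $x_{n-1}$ isolates $x_n$, Lemma~\ref{sum2}(2) gives $\depth(S/(I^t,x_{n-1}))=\depth(S_1/I(P_\omega^{\,n-2})^t)+1$ with $S_1=\KK[x_1,\dots,x_{n-2}]$; the induction on $n$ together with $\lceil\frac{(n-2)-t+1}{3}\rceil+1=\lceil\frac{n-t+2}{3}\rceil\ge\lceil\frac{n-t+1}{3}\rceil$ bounds this below by $\max\{\lceil\frac{n-t+1}{3}\rceil,c\}$. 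To treat $I^t:x_{n-1}$ I would use two further sequences exactly as in Theorem~\ref{path power3}: splitting $x_n$ off $I^t:x_{n-1}$, and splitting $x_{n-2}$ off $J^t:x_{n-1}$ with $J=I(P_\omega^n\setminus x_n)=I(P_\omega^{\,n-1})$.

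The quotients produced by Lemma~\ref{pathcolon} are all tractable: $(I^t:x_{n-1}):x_n=I^{t-1}$ and $(J^t:x_{n-1}):x_{n-2}=J^{t-1}$ reduce to lower powers (induction on $t$); $((I^t:x_{n-1}),x_n)=((J^t:x_{n-1}),x_n)$ passes to the smaller path $P_\omega^{\,n-1}$; and $((J^t:x_{n-1}),x_{n-2})=(I(P_\omega^{\,n-1}\setminus x_{n-2})^t,x_{n-2})$ again isolates a vertex, contributing the decisive $+1$ through Lemma~\ref{sum2}(2) via $\lceil\frac{(n-3)-t+1}{3}\rceil+1=\lceil\frac{n-t+1}{3}\rceil$. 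Feeding the inductive bounds into Lemma~\ref{exact}(2) at each of the three stages, every branch is $\ge\max\{\lceil\frac{n-t+1}{3}\rceil,c\}$, and taking minima preserves this, which gives the claim.

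The main obstacle will be the bookkeeping of the constant floor $c$ in the boundary cases $n\in\{5,6\}$: there the far-end peelings can delete the weighted edge $e_3$ (for instance, removing $x_{n-1}=x_4$ when $n=5$ leaves the path on $x_1,x_2,x_3$ carrying only $e_1$), so the inductive hypothesis for the reduced path only delivers the floor $1$ rather than $c$. The point to verify carefully is that precisely in these situations the vertex deletion also isolates a vertex, and the resulting $+1$ from Lemma~\ref{sum2}(2) restores the floor to $c=2$ when $\omega_3>1$; the small-path computations of Theorem~\ref{small2} (in particular $\depth(S/I(P_\omega^4)^t)=2$ when $\omega_1,\omega_3>1$) must be invoked to anchor these base cases. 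Once the floor is shown to survive each peeling, the ceiling arithmetic above closes the induction uniformly for all $n\ge 5$.
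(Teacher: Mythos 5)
Your overall scheme (induction on $n$ and $t$; peel $x_{n-1}$ off $I^t$, then $x_n$ off $I^t:x_{n-1}$, then $x_{n-2}$ off $J^t:x_{n-1}$ with $J=I(P_\omega^n\setminus x_n)$) is the same as the paper's, and it does work for case (1) and for case (2) when $n\ge 7$. But there is a genuine gap exactly at the base case $n=5$ of case (2), and it is not the bookkeeping issue you flagged. Your third exact sequence needs the identities $(J^t:x_{n-1}):x_{n-2}=J^{t-1}$ and $((J^t:x_{n-1}),x_{n-2})=(I(P_\omega^{n-1}\setminus x_{n-2})^t,x_{n-2})$, which come from Lemma \ref{pathcolon}; that lemma requires the \emph{last} edge of the path it is applied to to have weight $1$. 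For $n=5$ and $\omega_3\ge 2$, $J=I(P_\omega^4)=((x_1x_2)^{\omega_1},x_2x_3,(x_3x_4)^{\omega_3})$ has last edge $e_3$ of weight $\omega_3\ge 2$, so the lemma does not apply, and the first identity is actually false: $(x_1x_2)^{\omega_1}\cdot x_3x_4$ is divisible by no product of two generators of $J$, so $(x_1x_2)^{\omega_1}\notin (J^2:x_3x_4)$, whence $J\not\subseteq (J^2:x_3x_4)$ and $(J^2:x_4):x_3\neq J^{2-1}$. Since $n=5$ is where your induction on $n$ must start, case (2) cannot get off the ground with this peeling order.

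The paper circumvents precisely this by changing the peeling order for $n\in\{5,6\}$ in case (2): it uses the two sequences (20), coloning by $x_n$ first and then by $x_{n-1}$, so that Lemma \ref{pathcolon} is only ever applied to $I$ itself, whose last edge has weight $\omega_{n-1}=1$. Concretely, $(I^t:x_n):x_{n-1}=I^{t-1}$, $((I^t:x_n),x_{n-1})=(I(P_\omega^n\setminus x_{n-1})^t,x_{n-1})$ and $(I^t,x_n)=(I(P_\omega^n\setminus x_n)^t,x_n)$, and each term has depth at least $2$ by Theorem \ref{small2}, Lemma \ref{sum2} and induction on $t$; this anchors case (2), after which your three-sequence scheme runs for $n\ge 7$. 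A smaller inaccuracy: in case (1) the peelings produce the $4$-vertex path with weights $(\omega_1,1,1)$, which is \emph{not} covered by Theorem \ref{small2}; the paper must invoke Proposition \ref{path power5} for exactly this configuration. Your isolated-vertex ``$+1$'' observation is correct and is indeed how the floor $c$ is preserved under deletion, but it does not repair the $n=5$ failure above.
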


\begin{proof}
Let $I=I(P_{\omega}^n)$ and  $J=I(P_\omega^n\setminus{x_n})$. Then $J$ is an ideal in $S_1$, where  
$S_1=\KK[x_1,\ldots,x_{n-1}]$. Furthermore, by Lemma \ref{pathcolon},  we  can see that   $(J^t: x_{n-1}):x_{n-2}=J^{t-1}$, $((J^t: x_{n-1}),x_{n-2})=(I(P_\omega^n\setminus {\{x_n,x_{n-2}\}})^t,x_{n-2})$, $(I^t: x_{n-1}): x_n=I^{t-1}$, $((I^t: x_{n-1}),x_n)=((I(P_\omega^n\setminus{x_n})^t: x_{n-1}),x_n)$ and $(I^t,x_{n-1})=(I(P_\omega^n\setminus{x_{n-1}})^t,x_{n-1})$.

We will prove the statements by induction on $n$ and $t$. There are two cases to consider.

(1) If $\omega_3=1$, then the base case where  $t=1$ follows from  Theorem \ref{path}. Now, we assume that $t\ge 2$.  By using  Lemma \ref{sum2}, Theorem  \ref{small2}, Proposition \ref{path power5}, and the inductive hypothesis,
we can deduce that
\begin{align*}
\depth(S_1/(J^t: x_{n-1}):x_{n-2})& \ge \max\Big\{\lceil \frac{n-t+1}{3} \rceil,1\Big\}, \\
\depth(S_1/((J^t: x_{n-1}),x_{n-2})) &\ge \max\Big\{\lceil \frac{n-t+1}{3} \rceil,1\Big\},\\
\depth(S/((I^t: x_{n-1}): x_n)) &\ge \max\Big\{\lceil \frac{n-t+2}{3} \rceil,1\Big\},\\
\depth(S/(I^t,x_{n-1})) &\ge \max\Big\{\lceil \frac{n-t+2}{3} \rceil,1\Big\}.
\end{align*}
 Applying Lemma \ref{exact} to the following exact sequences
 \begin{gather*}
\hspace{1cm}\begin{matrix}
 0 & \rightarrow & \frac{S}{I^t : x_{n-1}}(-1)  & \stackrel{ \cdot x_{n-1}} \longrightarrow  & \frac{S}{I^t} & \rightarrow & \frac{S}{(I^t, x_{n-1})} & \rightarrow & 0, \\
  0 & \rightarrow & \frac{S}{(I^t : x_{n-1}) : x_n}(-1) & \stackrel{ \cdot  x_n} \longrightarrow & \frac{S}{I^t : x_{n-1}} &\rightarrow & \frac{S}{((I^t : x_{n-1}),x_n)} & \rightarrow & 0, \\
 0 & \rightarrow & \frac{S_1}{(J^t : x_{n-1}) : x_{n-2}}(-1) & \stackrel{ \cdot  x_{n-2}} \longrightarrow & \frac{S_1}{J^t : x_{n-1}} &\rightarrow & \frac{S_1}{((J^t : x_{n-1}),x_{n-2})} & \rightarrow & 0, & \hspace{1cm}(19)
 \end{matrix}
\end{gather*}
it follows that   $\depth(S/I^t) \ge \max\{\lceil \frac{n-t+1}{3} \rceil,1\}$.

(2) Suppose $\omega_3>1$. There are the following two subcases to consider.

(i) If  $n=5$, or $6$, then $((I^t : x_n),x_{n-1})=(I(P_\omega^n\backslash{x_{n-1}})^t,x_{n-1})$ and $(I^t,x_n)=(I(P_\omega^n\setminus{x_n})^t,x_n)$. By applying  Theorem \ref{path}, Lemma \ref{sum2}, and the inductive hypothesis, we can conclude that
\begin{align*}
\depth(S/((I^t : x_n) : x_{n-1})) &\ge 2, \ \  \depth(S/I)=2,\\
\depth(S/((I^t : x_n),x_{n-1}))&\ge 2, \ \  \depth(S/(I^t,x_n)) \ge 2.
\end{align*}
Thus we obtain  $\depth(S/I^t) \ge 2$ by applying Lemma \ref{exact} to the following  exact sequences
\begin{gather*}
\hspace{1cm}\begin{matrix}
 0 & \rightarrow & \frac{S}{I^t : x_{n}}(-1)  & \stackrel{ \cdot x_{n}} \longrightarrow  & \frac{S}{I^t} & \rightarrow & \frac{S}{(I^t, x_{n})} & \rightarrow & 0, \\
  0 & \rightarrow & \frac{S}{(I^t : x_{n}) : x_{n-1}}(-1) & \stackrel{ \cdot  x_{n-1}} \longrightarrow & \frac{S}{I^t : x_{n}} &\rightarrow & \frac{S}{((I^t : x_{n}),x_{n-1})} & \rightarrow & 0. & \hspace{1.5cm}(20)
 \end{matrix}
\end{gather*}

(ii) Suppose that  $n\ge 7$, we can use  Lemma \ref{sum2},  Theorem \ref{path}, and the inductive hypothesis to conclude that
\begin{align*}
\depth(S_1/(J^t: x_{n-1}):x_{n-2})& \ge \max\Big\{\lceil \frac{n-t+1}{3} \rceil,2\Big\},\\
\depth(S_1/((J^t: x_{n-1}),x_{n-2})) &\ge \max\Big\{\lceil \frac{n-t+1}{3} \rceil,2\Big\},\\
\depth(S/((I^t: x_{n-1}): x_n)) &\ge \max\Big\{\lceil \frac{n-t+2}{3} \rceil,2\Big\},\\
 \depth(S/(I^t,x_{n-1})) &\ge \max\Big\{\lceil \frac{n-t+2}{3} \rceil,2\Big\}.
\end{align*}
The desired formulas can be  obtained by applying Lemma  \ref{exact} to the exact sequences (19) mentioned above.
\end{proof}

\begin{Theorem}\label{tdepth2}
Let  $P_{\omega}^n$ be a path  as in Remark \ref{n-path}, where $n\ge 5$. If $\omega_i>\omega_{i+2}$ for some $i\ge 1$, and $\omega_j=1$ for all $j\neq i,i+2$. Then, for any $t\ge2$, we have 
\begin{footnotesize}
\item[(1)] if $\omega_{i+2}=1$,  then $\depth\big(\frac{S}{I(P_{\omega}^n)^t}\big) \ge\begin{cases}
            \lceil \frac{n-1}{3} \rceil,  &\!\! \text{if $t=2$, $i\equiv 1\!\!\pmod 3$ and $n\equiv 2\!\!\pmod 3$,}        \\
            \max\Big\{\lceil \frac{n-t}{3} \rceil,1\Big\}, &\!\! \text{otherwise.}
        \end{cases}$\\
\item[(2)] if $\omega_{i+2}>1$, then $\depth\big(\frac{S}{I(P_{\omega}^n)^t}\big) \ge \max\Big\{\lceil \frac{n-t}{3} \rceil,2\Big\}$.
\end{footnotesize}
\end{Theorem}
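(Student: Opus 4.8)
The plan is to prove both lower bounds by a simultaneous induction on $n$ and $t$, peeling vertices off a trivially weighted end of the path and pushing depth estimates through short exact sequences with Lemma \ref{exact}(2). Write $I=I(P_\omega^n)$ and $J=I(P_\omega^n\setminus x_n)\subseteq S_1=\KK[x_1,\ldots,x_{n-1}]$. The case $t=1$ is exactly the depth formula of Theorem \ref{path}, so I may assume $t\ge 2$ and that the statement is known for all smaller $n$ and all smaller $t$. Since $\omega_{i+2}=1$ in part (1) leaves a single weighted edge while $\omega_{i+2}>1$ in part (2) leaves two, the constant floor in the bound ($1$ versus $2$) is forced by the surviving weighted edges, exactly as in Theorem \ref{small2} and Proposition \ref{path power5}.

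First I would fix a trivially weighted end. Because $n\ge 5$, the edges $e_i$ and $e_{i+2}$ cannot occupy both ends at once, so at least one of $\omega_1,\omega_{n-1}$ equals $1$; when $i\le n-4$ the edge $e_{n-1}$ is trivial and I peel $x_{n-1},x_n,x_{n-2}$ from the right, whereas if $i=n-3$ I peel the symmetric vertices from the left (the remaining possibility $i=1$ is subsumed, with the sharper bound $\lceil\frac{n-t+1}{3}\rceil$, by Theorem \ref{tdepth1}). At the chosen end Lemma \ref{pathcolon} supplies the identifications $(I^t:x_{n-1}):x_n=I^{t-1}$, $(I^t,x_{n-1})=(I(P_\omega^n\setminus x_{n-1})^t,x_{n-1})$ and $((I^t:x_{n-1}),x_n)=((J^t:x_{n-1}),x_n)$, together with $(J^t:x_{n-1}):x_{n-2}=J^{t-1}$ and $((J^t:x_{n-1}),x_{n-2})=(I(P_\omega^n\setminus\{x_n,x_{n-2}\})^t,x_{n-2})$.

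Next I would feed these into three nested short exact sequences of the shape used in (19) and estimate each outer term, applying Lemma \ref{exact}(2) three times from the innermost sequence outward. The colon ideals reduce only the exponent, so $S/I^{t-1}$ and $S_1/J^{t-1}$ are governed by the inductive hypothesis on $t$ (with $n$ unchanged). The sum quotients $S/(I(P_\omega^n\setminus x_{n-1})^t,x_{n-1})$ and $S/(I(P_\omega^n\setminus\{x_n,x_{n-2}\})^t,x_{n-2})$ split by Lemma \ref{sum2} as the power of the edge ideal of a strictly shorter path (controlled by the inductive hypothesis on $n$, or by Lemma \ref{trivialpath} once the weighted edges have been stripped) tensored with a polynomial ring in the freed variables, which supplies the extra units of depth. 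Taking the minimum over the three sequences and simplifying the ceilings across the shifts $n\mapsto n-1,n-2$ and $t\mapsto t-1$ produces the generic bound $\max\{\lceil\frac{n-t}{3}\rceil,c\}$ with $c=1$ in part (1) and $c=2$ in part (2).

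The delicate point, and the main obstacle, is the exceptional value $\lceil\frac{n-1}{3}\rceil$ in part (1) when $t=2$, $i\equiv 1\pmod 3$ and $n\equiv 2\pmod 3$, where the generic estimate loses one unit. Here I would argue not by a uniform reduction but by verifying directly, writing $i=3s+1$ and $n=3r+2$, that every term surviving the $t=2$ reduction has depth at least $r+1=\lceil\frac{n-1}{3}\rceil$: the piece $S/I^{t-1}=S/I$ has depth $\min\{r+2,r+1\}=r+1$ by Theorem \ref{path} (with $a=0$), the piece $S_1/J$ likewise equals $r+1$, and each sum quotient contributes $\lceil\frac{n-4}{3}\rceil+1=r+1$ after accounting for the freed variable via Lemma \ref{sum2}. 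The congruences $i\equiv 1$, $n\equiv 2\pmod 3$ are precisely what force these ceilings to coincide at $r+1$ rather than dropping to $r$; without them one summand falls by a unit and only the generic bound survives. The remaining work is the routine arithmetic of ceiling functions and the verification of the anchor cases $n\in\{5,6\}$ directly from Theorem \ref{path}, Theorem \ref{small2} and Proposition \ref{path power5}.
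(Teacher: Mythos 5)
Your proposal follows the paper's own proof almost step for step: the same double induction anchored at $t=1$ by Theorem \ref{path}, the same identifications from Lemma \ref{pathcolon}, the same three nested exact sequences (19) processed with Lemma \ref{exact}(2), the same use of Lemma \ref{sum2} to account for freed variables, and, in the exceptional case $t=2$, $i\equiv 1$, $n\equiv 2\pmod 3$, the same direct verification that all four outer terms have depth at least $\lceil \frac{n-1}{3}\rceil$. Your arithmetic there is correct (writing $n=3r+2$: $\depth(S/I)=r+1$ and $\depth(S_1/J)=r+1$ by Theorem \ref{path}, and each sum quotient gives $\lceil\frac{n-4}{3}\rceil+1=r+1$, where for the quotient coming from the path on $n-3\equiv 2\pmod 3$ vertices this tacitly invokes the \emph{exceptional-case} bound of the inductive hypothesis, exactly as the paper does in its subcase (i)).

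There is, however, a genuine flaw in your case division for part (2). You peel from the right whenever $i\le n-4$, justifying this only by ``$e_{n-1}$ is trivial.'' That is not sufficient: the third exact sequence applies Lemma \ref{pathcolon} to $J=I(P_\omega^n\setminus x_n)$, a path whose \emph{last} edge is $e_{n-2}$, so it needs $\omega_{n-2}=1$ as well. In part (2) with $i=n-4$ one has $e_{i+2}=e_{n-2}$, hence $\omega_{n-2}=\omega_{i+2}\ge 2$, and the identifications $(J^t:x_{n-1}):x_{n-2}=J^{t-1}$ and $((J^t:x_{n-1}),x_{n-2})=(I(P_\omega^n\setminus\{x_n,x_{n-2}\})^t,x_{n-2})$ both fail (already a single edge of weight $2$ shows this: for $J=((x_{n-2}x_{n-1})^2)$ one gets $(J^t:x_{n-1}x_{n-2})=(x_{n-2}^{2t-1}x_{n-1}^{2t-1})\neq J^{t-1}$). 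Since $i=n-4\ge 3$ whenever $n\ge 7$, these configurations are not in your anchor set $\{5,6\}$, so your outline breaks exactly there. This is the point where the paper does something you omitted: it first restricts to $2\le i\le\lfloor\frac{n}{2}\rfloor-1$ in part (2), which for $n\ge 8$ keeps $e_{n-2}$ trivial, and it handles $n\in\{6,7\}$ by a separate, shorter two-step peeling (the sequences (20): colon and sum by $x_n$, then by $x_{n-1}$), which only requires $\omega_{n-1}=1$. The repair within your framework is easy---for $i\ge n-4$ and $n\ge 7$ peel from the left instead (then $i\ge 3$ guarantees $e_1,e_2$ are trivial), or adopt the paper's two-step peel---but as written your rule sends $i=n-4$ to an argument whose key colon identities are false.
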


\begin{proof} Using the notations from  the proof of Theorem \ref{tdepth1}, we can observe from Lemma \ref{pathcolon} that   $(J^t: x_{n-1}):x_{n-2}=J^{t-1}$, $(I^t: x_{n-1}): x_n=I^{t-1}$, $((J^t: x_{n-1}),x_{n-2})=(I(P_\omega^n\setminus {\{x_n,x_{n-2}\}})^t,x_{n-2})$, $((I^t: x_{n-1}),x_n)=((I(P_\omega^n\setminus{x_n})^t: x_{n-1}),x_n)$ and $(I^t,x_{n-1})=(I(P_\omega^n\setminus{x_{n-1}})^t,x_{n-1})$.

We will prove the statements by induction on $n$ and $t$.  The base case, where  $t=1$, follows from  Theorem \ref{path}. Now, we assume that $t\ge 2$.  There are two cases:

(1) If $\omega_{i+2}=1$, then we can assume  that $2 \le i \le \lfloor \frac{n}{2} \rfloor$ due to symmetry.  Since $\depth(S/I)=\min \{\lceil\frac{i}{3}\rceil+\lceil\frac{n-i}{3}\rceil, \lceil \frac{i-2}{3} \rceil+\lceil \frac{n-i-2}{3} \rceil+1\}\ge \lceil\frac{n-1}{3}\rceil$ by Theorem \ref{path}. Applying Lemma \ref{sum2}, Theorem \ref{tdepth1}, and the inductive hypothesis, we can conclude that

(i) If  $t=2$, $i\equiv 1\!\!\pmod 3$ and $n\equiv 2\!\!\pmod 3$, then 
\begin{align*}
\depth(S_1/(J^t : x_{n-1}x_{n-2})) &\ge \lceil \frac{n-1}{3} \rceil, \ \ \depth(S/(I^t : x_{n-1}x_{n})) \ge \lceil \frac{n-1}{3} \rceil,\\
\depth(S_1/((J^t : x_{n-1}),x_{n-2}))&\ge \lceil \frac{n-1}{3} \rceil,\ \  \depth(S/(I^t, x_{n-1}))\ge \lceil \frac{n-1}{3} \rceil.
\end{align*}

(ii) Otherwise,  we have
\begin{align*}
\depth(S_1/(J^t : x_{n-1}x_{n-2})) &\ge \max\big\{\lceil \frac{n-t}{3} \rceil,1\big\}, \\
\depth(S_1/((J^t : x_{n-1}),x_{n-2}))&\ge \max\big\{\lceil \frac{n-t}{3} \rceil,1\big\},\\
\depth(S/(I^t : x_{n-1}x_{n})) &\ge \max\big\{\lceil \frac{n-t+1}{3} \rceil,1\big\},\\
\depth(S/(I^t, x_{n-1}))&\ge \max\big\{\lceil \frac{n-t+1}{3} \rceil,1\big\}.
\end{align*}
By Lemma \ref{exact} and  the exact sequences
(19) mentioned above, we obtain  
\[
\depth\big(\frac{S}{I(P_{\omega}^n)^t}\big) \ge\begin{cases}
            \lceil \frac{n-1}{3} \rceil,  &\text{if $t=2$, $i\equiv 1\!\!\!\!\pmod 3$ and $n\equiv 2\!\!\!\!\pmod 3$,}        \\
            \max\Big\{\lceil \frac{n-t}{3} \rceil,1\Big\}, &\!\! \text{otherwise.}
        \end{cases}
\]

(2) If $\omega_{i+2}>1$,  then we can assume  that  $2 \le i \le \lfloor \frac{n}{2} \rfloor-1$ due to symmetry. There are the following two cases:

(i) If  $n=6$ or $7$, then  $((I^t : x_n),x_{n-1})=(I(P_\omega^n\backslash{x_{n-1}})^t,x_{n-1})$ and $(I^t,x_n)=(I(P_\omega^n\backslash{x_n})^t,x_n)$. Applying  Lemma \ref{sum2},   Theorem \ref{path}, Theorem \ref{tdepth1}, and the inductive hypothesis, it follows that
\begin{align*}
\depth(S/((I^t : x_n) : x_{n-1}))& \ge 2, \ \  \depth(S/I)=2,\\
\depth(S/((I^t : x_n),x_{n-1}))&\ge 2, \ \ \depth(S/(I^t,x_n)) \ge 2.
\end{align*}
 By Lemma \ref{exact} and the exact sequences (20) mentioned above,  we obtain  $\depth(S/I^t) \ge 2$,  which means that $\depth\big(\frac{S}{I(P_{\omega}^n)^t}\big) \ge \max\Big\{\lceil \frac{n-t}{3} \rceil,2\Big\}$.
 
 (ii) If  $n\ge 8$, then $\depth(S/I)=\min \{\lceil\frac{i}{3}\rceil+\lceil\frac{n-i-1}{3}\rceil, \lceil \frac{i-2}{3} \rceil+\lceil \frac{n-i-2}{3} \rceil+1\}\ge \lceil\frac{n-1}{3}\rceil$  by Theorem \ref{path}. 
Using Lemma \ref{sum2}, Theorem \ref{tdepth1}, and  the inductive hypothesis, we obtain 
\begin{align*}
\depth(S_1/(J^t : x_{n-1}x_{n-2})) &\ge \max\big\{\lceil \frac{n-t}{3} \rceil,2\big\},\\
\depth(S_1/((J^t : x_{n-1}),x_{n-2}))&\ge \max\big\{\lceil \frac{n-t}{3} \rceil,2\big\},\\
\depth(S/(I^t : x_{n-1}x_{n})) &\ge \max\big\{\lceil \frac{n-t+1}{3} \rceil,2\big\},\\
\depth(S/(I^t, x_{n-1}))&\ge \max\big\{\lceil \frac{n-t+1}{3} \rceil,2\big\}.
\end{align*}
Applying Lemma \ref{exact} to the exact sequences (19) mentioned above,  we obtain $\depth(S/I^t) \ge \max\big\{\lceil \frac{n-t}{3} \rceil,2\big\}$.
\end{proof}

The following four examples provide instances in which the lower bounds of powers of the edge ideal of a non-trivial edge-weighted integrally closed path in Theorems \ref{tdepth1} and \ref{tdepth2} are attained.
\begin{Example}
 $I_1=(x_1^2x_2^2,x_2x_3,x_3x_4,x_4x_5)$, $I_2=(x_1^2x_2^2,x_2x_3,x_3x_4,x_4x_5,x_5x_6)$ and $I_3=(x_1^2x_2^2,x_2x_3,x_3x_4,x_4x_5,x_5x_6,x_6x_7)$ are edge ideals of integrally closed paths
 $P_\omega^5$, $P_\omega^6$ and $P_\omega^7$, respectively.  Using CoCoA, we obtain that $\depth(R/I_1^2)=\depth(R/I_2^2)=\depth(R/I_3^2)=2$, which is  the lower bound given by Theorem \ref{tdepth1}.
\end{Example}

\begin{Example} $I_1=(x_1^2x_2^2,x_2x_3,x_3^3x_4^3,x_4x_5)$, $I_2=(x_1^4x_2^4,x_2x_3,x_3^2x_4^2,x_4x_5,x_5x_6)$ and $I_3=(x_1^2x_2^2,x_2x_3,x_3^3x_4^3,x_4x_5,x_5x_6,x_6x_7)$ are edge ideals of integrally closed paths
 $P_\omega^5$, $P_\omega^6$ and $P_\omega^7$, respectively.  Using CoCoA, we obtain that $\depth(R/I_1^2)=\depth(R/I_2^2)=\depth(R/I_3^2)=2$, which is  the lower bound given by Theorem \ref{tdepth1}.
\end{Example}

\begin{Example}
 $I_1=(x_1x_2,x_2^2x_3^2,x_3x_4,x_4x_5)$, $I_2=(x_1x_2,x_2^2x_3^2,x_3x_4,x_4x_5,x_5x_6)$, $I_3=(x_1x_2,x_2^2x_3^2,x_3x_4,x_4x_5,x_5x_6,x_6x_7)$  and $I_4=(x_1x_2,x_2x_3,x_3x_4,x_4x_5^2,x_5x_6,x_6x_7,x_7x_8)$ are edge ideals of integrally closed paths $P_\omega^5$, $P_\omega^6$, $P_\omega^7$ and $P_\omega^8$, respectively. Using  CoCoA, we obtain that $\depth(R/I_1^2)=1$, $\depth(R/I_2^2)=\depth(R/I_3^2)=2$ and $\depth(R/I_4^2) = 3$, which are  the lower bounds given by Theorem \ref{tdepth2}.
\end{Example}

\begin{Example}
 $I_1=(x_1x_2,x_2^2x_3^2,x_3x_4,x_4^3x_5^3,x_5x_6)$, $I_2=(x_1x_2,x_2^2x_3^2,x_3x_4,x_4^3x_5^3,x_5x_6,\\
 x_6x_7)$ and $I_3=(x_1x_2,x_2^2x_3^2,x_3x_4,x_4^3x_5^3,x_5x_6,x_6x_7,x_7x_8)$ are edge ideals of integrally closed paths
 $P_\omega^6$, $P_\omega^7$ and $P_\omega^8$, respectively. By using CoCoA, we obtain that $\depth(R/I_1^2)\\
 =\depth(R/I_2^2)=\depth(R/I_3^2)=2$, which is  the lower bound given by Theorem \ref{tdepth2}.
\end{Example}

\medskip
\hspace{-6mm} {\bf Acknowledgments}

 \vspace{3mm}
\hspace{-6mm}  This research is supported by the Natural Science Foundation of Jiangsu Province (No. BK20221353). We gratefully acknowledge the use of the computer algebra system CoCoA (\cite{Co}) for our experiments.






\begin{thebibliography}{99}
\bibitem{B}  A. Banerjee, The regularity of powers of edge ideals, {\it J. Algebraic Combin.}, 41 (2014), 303-321.




\bibitem{BHT} S. Beyarslan, H. T. H\`a and T. N. Trung, Regularity of powers of forests and cycles, {\it J. Algebraic Combin.}, 42 (2015), 1077-1095.

\bibitem{Br}  M. Brodmann,  The asymptotic nature of the analytic spread, {\it  Math. Proc. Cambridge Philos Soc.}, 86 (1979), 35--39.

\bibitem{BH}  W. Bruns and J. Herzog, {\it Cohen–Macaulay Rings}, Revised  edition. (Cambridge University Press, 1998).



\bibitem{Co} CoCoATeam, CoCoA: a system for doing Computations in Commutative Algebra, Avaible at
http://cocoa.dima.unige.it.

\bibitem{Con} A. Conca, Regularity jumps for powers of ideals,   In Commutative algebra, volume 244 of  {\it Lect. Notes Pure Appl. Math.},  (2006), 21--32.


\bibitem{CHT} S. D. Cutkosky, J. Herzog and N. V. Trung, Asymptotic behaviour of the Castelnuovo-Mumford regularity, {\it Compos. Math.} 118 (3) (1999), 243-261.

\bibitem{DMV}  L. T. K. Diem, N. C. Minh, and T. Vu, The sequentially Cohen-Macaulay property of edge ideals of edge-weighted graphs, arXiv:2308.05020.


\bibitem{DZCL}  S. Y. Duan, G. J. Zhu, Y. J. Cui and J. X. Li, Integral closure and normality of edge ideals of some edge-weighted graphs, arXiv:2308.06016.








\bibitem{EK} S. Eliahou and M. Kervaire, Minimal resolutions of some monomial ideals, {\it J. Algebra}, 129 (1990), 1-25.

\bibitem{SF} S. Faridi, Monomial ideals via square-free monomial ideals, {\it  Comm. Algebra}, 244 (2006), 85-114.

\bibitem{SM} S. Morey, Depths of powers of the edge ideal of a tree, {\it  Comm. Algebra}, 38(11) (2010), 4042-4055.

\bibitem{F} G. Fatabbi, On the resolution of ideals of fat points, {\it J. Algebra}, 242 (2001), 92-108.


\bibitem{FHT} C. A. Francisco, H. T. H\`a and A. Van Tuyl, Splittings of monomial ideals, {\it Proc. Amer. Math. Soc.}, 137 (10) (2009),
3271-3282.

\bibitem{MRW} W. Frank Moore, Mark Rogers, and Keri Sather-Wagstaff. Monomial ideals and their decompositions. Universi-text. Springer, Cham, 2018.

\bibitem{FM} L. Fouli and  S. Morey,  A lower bound for depths of powers of edge ideals, {\it J. Algebr Comb}, 42 (2015), 829-848.


\bibitem{HT1} H. T. H\`a  and T. N. Trung, Depth and regularity of powers of sums of ideals, {\it Math. Z.}, 282 (2016), 819-838.




\bibitem{HT} Jing He and A. Van Tuyl, Algebraic properties of the path ideal of a tree,  {\it Comm. Algebra}, 38 (5) (2010), 1725-1742.

\bibitem{HH} J. Herzog and T. Hibi,  {\it  Monomial ideals}, New York, NY, USA: Springer-Verlag, 2011.

\bibitem{HHi} J. Herzog and T. Hibi,  The depth of powers of an ideal, {\it J. Algebra}, 291 (2) (2005), 534--550.



\bibitem{Hi}  T. T. Hien, Cohen-Macaulay edge-weighted graphs of girth 5 or greater, arXiv:2309.05056.

\bibitem{HT2} L. T. Hoa and N. D. Tam, On some invariants of a mixed product of ideals, {\it Arch. Math}, 94 (4) (2010), 327-337.

\bibitem{SJ} S. Jacques, Betti numbers of graph ideals, arXiv:math/0410107.

\bibitem{K} V. Kodiyalam, Asymptotic behaviour of Castelnuovo-Mumford regularity, {\it Proc. Amer. Math. Soc.}, 128 (1999), 407--411.

\bibitem{MTV} N. C. Minh, T. N. Trung and T. Vu, Depth of powers of edge ideals of cycles and trees, arXiv:2308.00874.



\bibitem{M}  S. Morey, Depths of powers of the edge ideal of a tree, {\it Comm. Algebra}, 38, (2010),  4042-4055.



\bibitem{MV} S. Morey and R. H. Villarreal, {\it Edge ideals: algebraic and combinatorial properties}, in Progress
in Commutative Algebra, Combinatorics and Homology, Vol. 1 (C. Francisco, L. C. Klingler,
S. Sather-Wagstaff, and J. C. Vassilev, Eds.), De Gruyter, Berlin, 2012, 85-126.




\bibitem{PS} C. Paulsen and S. Sather-Wagstaff, Edge ideals of weighted graphs, {\it J. Algebra Appl.}, 12 (2013), 1250223-1-24.

\bibitem{FSTY} S. A. Seyed Fakhari, K. Shibata, N. Terai and  Siamak Yassemi, Cohen-Macaulay edge weighted edge ideals of very well-covered graphs,
 {\it Comm. Algebra} 49(10) (2021),  4249-4257.

\bibitem{W} S.  Wei, Cohen-Macaulay weighted chordal graphs, arXiv:2309.02414.


\bibitem{Z}  G. J. Zhu, Projective dimension and regularity of the path ideal of the line graph, {\it J. Algebra Appl.}, 17(4), (2018), 1850068-1-15.














 \end{thebibliography}
\end{document}